\documentclass[12pt]{article}
\usepackage{amsmath,amssymb,amsthm}
\usepackage{indentfirst}
\usepackage[margin=1in]{geometry}
\usepackage[hidelinks]{hyperref}

\newtheorem{theorem}{Theorem}[section]
\newtheorem{lemma}[theorem]{Lemma}

\newtheorem{corollary}[theorem]{Corollary}
\newtheorem{conjecture}[theorem]{Conjecture}
\theoremstyle{remark}

\DeclareMathOperator{\per}{per}
\DeclareMathOperator{\ret}{ret}
\DeclareMathOperator{\dist}{dist}

\newcommand{\cF}{\mathcal F}
\newcommand{\cB}{\mathcal B}
\allowdisplaybreaks[2]

\begin{document}

\title{The biwheel conjecture for braces\footnote{Research supported by the National Natural Science Foundation of
China (Grant No.\ 12671408).}}

\author{Xing Feng, Jinqiu Zhou\footnote{E-mail address:
jqzhougt@163.com (Corresponding author).}\\
School of Science, Jimei University, Xiamen 361021, China\\
}

\date{}

 \maketitle{}

\begin{abstract}
Let $b(n)$ denote the minimum number of perfect matchings in a simple
brace of order $n$. De~Carvalho, Lucchesi and Murty (SIAM Journal on 
Discrete Mathematics, 27(2): (2013) 940-958) proved that 
$b(n)\ge(n-2)^2/8$ and conjectured that $b(n)=(n-2)^2/4$ for all
sufficiently large even $n$. We prove this conjecture.
\end{abstract}

\noindent\textbf{Keywords:} perfect matching; brace; biwheel;
matching covered graph; thin edge

\noindent\textbf{Mathematics Subject Classification:} 05C70, 05C75

\section{Introduction}\label{sec:intro}

A connected graph with at least one edge is \emph{matching covered} if
each of its edges belongs to a perfect matching. An edge cut is 
\emph{tight} if every perfect matching meets it in exactly one edge. A 
bipartite matching covered graph is a \emph{brace} if it has no tight 
cut whose two shores both contain more than one vertex. We write $m(G)$ 
for the number of perfect matchings of a graph $G$.

Braces are the bipartite building blocks in tight cut decomposition. It 
is therefore natural to ask how few perfect matchings a brace of
given order can have. De Carvalho, Lucchesi and Murty~\cite[Theorem~5.3]{CLM13}
proved that every brace of order $n\ge4$ has at least $(n-2)^2/8$
perfect matchings. They also proposed the following sharper bound.

\begin{conjecture}[Biwheel conjecture]\label{conj:biwheel}
\textup{\cite[Conjecture~5.2]{CLM13}}
For all sufficiently large even integers $n$, the minimum number of
perfect matchings in a brace of order $n$ is $(n-2)^2/4$.
\end{conjecture}

For $r\ge4$, the \emph{biwheel} of order $2r$ is obtained from a cycle
of length $2r-2$ by adding two vertices, called its hubs, and joining
each hub to all vertices in one of the two colour classes of the cycle.
The hubs lie in opposite colour classes. A biwheel is a simple brace
and has exactly $(r-1)^2$ perfect matchings: choose one rim neighbour
for each hub; deleting these two opposite rim vertices leaves even
paths, each with a unique perfect matching. Adding the edge between its
hubs gives an \emph{extended biwheel}, with $(r-1)^2+2$ perfect matchings,
since matchings using the hub edge leave the two matchings of the rim.
Thus biwheels give the upper bound in Conjecture~\ref{conj:biwheel}.
The qualification concerning sufficiently large orders is necessary;
small braces can have fewer perfect matchings than a biwheel of the
same order~\cite[Section~5]{CLM13}.

Our main result is the following.

\begin{theorem}\label{thm:main}
There is an integer $r_0$ such that every simple brace $G$ of order
$2r$, where $r\ge r_0$, satisfies
\[
                         m(G)\ge(r-1)^2.
\]
Consequently, $b(2r)=(r-1)^2$ for all $r\ge r_0$.
\end{theorem}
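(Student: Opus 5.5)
We will prove the bound by induction on $r$ using thin edges, the tool used by de~Carvalho, Lucchesi and Murty. Recall that an edge $e$ of a brace $G$ is \emph{thin} if the retract of $G-e$ is again a brace. We use their result that every simple brace other than a small set of exceptional families has a thin edge whose removal, after bicontracting vertices of degree two, gives a simple brace $H$ of order $2r$, $2r-2$ or $2r-4$. The exceptional families are biwheels, prisms, M\"obius ladders and extended biwheels. For the exceptional families we compute $m(G)$ directly. Biwheels have exactly $(r-1)^2$ perfect matchings and extended biwheels have $(r-1)^2+2$. Prisms and M\"obius ladders of order $2r$ are cubic, so by Schrijver's bound (or van der Waerden plus Bregman-type estimates) they have exponentially many perfect matchings, which exceeds $(r-1)^2$ once $r$ is large.

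The inductive step compares $m(G)$ with $m(H)$. Bicontraction preserves the number of perfect matchings, so
\[ m(G)=m(G-e)+m(G-\{u,v\}), \qquad m(G-e)=m(H), \]
where $e=uv$. Write $H$ for the retract, of order $2s$ with $s\in\{r,r-1,r-2\}$.
\begin{itemize}
\item If $s=r$, no bicontraction occurs and $m(G)\ge m(H)+1$. This is useless on its own, so we instead induct on the number of edges within a fixed order. The base cases are then the minimal braces of order $2r$, and we need $m\ge(r-1)^2$ for all of them.
\item If $s=r-1$, we need $m(G-\{u,v\})\ge 2r-3$.
\item If $s=r-2$, we need $m(G-\{u,v\})\ge 4r-8$.
\end{itemize}

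The key lemma is therefore a linear lower bound on $m(G-\{u,v\})$ for an edge $uv$ of a brace. Braces are bi-$2$-extendable (every two disjoint edges in opposite directions lie in a perfect matching), so $G-\{u,v\}$ is matching covered. We would try to prove that $G-\{u,v\}$ has at least roughly $2\cdot(\text{order})$ perfect matchings unless it has a very specific structure, namely that of a biwheel minus a rim edge. The natural tool is an ear decomposition count. A matching covered bipartite graph with $n'$ vertices and $m'$ edges has at least $m'-n'+2$ perfect matchings (Lov\'asz--Plummer). In a brace every vertex has degree at least $3$, so $m'-n'$ is at least about $n'/2$. That only gives about $r$, which suffices for the case $s=r-1$ when the degrees near $u,v$ are larger, but not in general.

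The main obstacle is the case where $G$ is close to a biwheel, for example where $G-e$ retracts by two bicontractions. There the linear bound must be sharp up to constants, $4r-8$ against the ear count. I would first try to strengthen the inductive hypothesis into a stability statement: every simple brace of order $2r\ge 2r_0$ either is a biwheel or extended biwheel, or has at least $(r-1)^2+cr$ perfect matchings. The slack $cr$ absorbs the deficit in the inductive step whenever $H$ is not a biwheel. When $H$ is a biwheel, we enumerate directly how a thin edge can be added back to a biwheel, possibly with one or two degree-two vertices expanded, to yield $G$. Each such graph is then checked by hand to have at least $(r-1)^2$, indeed at least $(r-1)^2+cr$, perfect matchings unless it is itself a biwheel. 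The threshold $r_0$ arises from the finitely many small braces where the stability constant fails, matching the small counterexamples noted in~\cite[Section~5]{CLM13}. The consequence $b(2r)=(r-1)^2$ then follows because biwheels attain the bound.
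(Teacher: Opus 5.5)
Your reduction is the one the paper ends with: for a strictly thin edge $uv$ of index $i$, $m(G)=m(\ret(G-uv))+h$ with $h=m(G-u-v)$. So everything rests on a lower bound for $h$ of order $2r$ when $i=1$ and $4r$ when $i=2$. That bound is the missing idea, and your stability strengthening does not supply it.

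\textbf{The stability hypothesis does not absorb anything.} Suppose $m(H)\ge(s-1)^2+cs$ and you want to reproduce $m(G)\ge(r-1)^2+cr$. Then you need $h\ge(r-1)^2-(s-1)^2+c(r-s)$, which is \emph{stronger} than the bound you were missing. If you settle for $m(G)\ge(r-1)^2$, the stronger hypothesis is not reproduced for $G$ and the induction stops.

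\textbf{The cofactor bound cannot come from local information.} The general estimate~\eqref{eq:coarsecofactor} gives only $h\ge4r-4d-12$, where $d$ is the degree of the noncubic endpoint, and this says nothing once $d\ge r/2$. Edges at a vertex of degree near $r$ really can have small cofactors: a spoke of a biwheel has cofactor $r-1$. The paper obtains the needed bounds only for a \emph{counterexample}, and only after a substantial global step. Theorem~\ref{thm:degree}, proved via the forest and permanent expansion of Sections~\ref{sec:forests}--\ref{sec:degree}, shows that a counterexample of rank $r\ge R$ has $\Delta(G)<r/2-4$. This gives $h\ge2r+5$ at index one.

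At index two, Lemma~\ref{lem:strictfibres} gives $\Delta(J)\le2\Delta(G)-2<r-10$ for $J=\ret(G-u-v)$. If $h\le4r-7$, Lemma~\ref{lem:dichotomy} leaves two possibilities. Either $J$ has a unit pair, whose vertices have degree at least $r-10$ by~\eqref{eq:hubdegree}, a contradiction. Or $J$, and hence $G$, has bounded edge excess, and the Gurvits-based Lemma~\ref{lem:entropy} gives $m(G)\ge(4/3)^{r/497}$. None of these ingredients appears in your plan: degree exclusion, the unit-pair structure, and the entropy bound. The enumeration when $H$ is a biwheel covers only a tiny part of the problem.

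\textbf{The induction has no base.} Passing from rank $r-i$ to $r$ presupposes the statement at ranks just below $r_0$, where it is unknown and, for small ranks, false. Saying the threshold comes from finitely many small exceptions does not help, because an inductive step may land on them. The paper instead runs the reduction \emph{downward} from a hypothetical strict counterexample of rank $r_0\ge R^2$. With $Q(r)=(r-1)^2$, the cofactor bounds are strict enough:
\[
h\ge2r+5>Q(r)-Q(r-1)+1,\qquad h\ge4r-6=Q(r)-Q(r-2)+2 .
\]
Hence each reduced graph is again a strict counterexample, and its deficit $Q-m$ has grown by at least the rank loss. The degree theorem then applies again, and the process continues down to a rank $k<R$. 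Telescoping gives a deficit at least $1+r_0-k$, while $m\ge1$ caps it at $(k-1)^2-1$; this forces $r_0<R^2$, a contradiction.

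Index-zero steps need no separate base of ``minimal braces''; lexicographic induction on rank and size absorbs them. Without both the cofactor bounds and this deficit bookkeeping, your outline does not yield the theorem.
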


Clearly, the restriction to simple graphs loses nothing for the lower 
bound. In particular, Theorem~\ref{thm:main} proves
Conjecture~\ref{conj:biwheel}. It also gives
\[
                  \lim_{r\longrightarrow\infty}
                  \frac{b(2r)}{(2r)^2}=\frac14.
\]

Throughout, constants are chosen for a transparent proof rather than
a useful numerical threshold. One admissible value is
\begin{equation}\label{eq:threshold}
 r_0=R^2,\qquad R=1+119\cdot69432\cdot7^{119}.
\end{equation}

\section{Preliminaries and linear estimates}\label{sec:prelim}

We follow the terminology of~\cite{BM08,LP09,LM24}.
Graphs are finite and loopless. Original braces will be simple;
auxiliary graphs may have parallel edges. Degrees, sizes and perfect
matchings distinguish all parallel copies. The \emph{rank} of a
bipartite graph with a perfect matching is the size of either colour
class. We write $G[A,B]$ to specify these classes. A vertex of degree
three is \emph{cubic}, and $x_+=\max\{x,0\}$.
An \emph{opposite pair} has one vertex in each colour class,
whether adjacent or not; its \emph{cofactor} is $m(G-a-b)$.
We set $m(\varnothing)=1$. A nontrivial parallel class has at
least two edge copies. Whenever a parallel class is used for a
reduction below, it is nontrivial. For a graph of rank $q$, its
\emph{edge excess} is $|E|-3q$, unless a different reference
quantity is explicitly specified; the degree excess of a vertex
means $d(v)-3$. Logarithms are natural and
$0\log0=0$ by continuity.

\begin{theorem}\label{thm:hall}
\textup{\cite{BM08}}
A bipartite graph $J[X,Y]$ has a matching covering $X$ if and only
if $|N_J(U)|\ge|U|$ for every $U\subseteq X$.
\end{theorem}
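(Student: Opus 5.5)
The statement immediately above is Hall's theorem (Theorem~\ref{thm:hall}), which the paper quotes from~\cite{BM08}. I would prove it by the standard induction on $|X|$.

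\emph{Necessity.} If a matching $M$ covers $X$, then $M$ sends each $U\subseteq X$ injectively into $N_J(U)$. Hence $|N_J(U)|\ge|U|$.

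\emph{Sufficiency: setup and easy cases.} Assume $|N_J(U)|\ge|U|$ for all $U\subseteq X$ and induct on $|X|$. The case $|X|\le1$ is immediate: the condition gives the single vertex of $X$ a neighbour. Parallel edges are irrelevant throughout, since only neighbourhoods matter. For $|X|\ge2$, split into two cases.
\begin{itemize}
\item[(i)] \emph{Every nonempty proper subset has surplus:} $|N_J(U)|\ge|U|+1$ for all nonempty proper $U\subsetneq X$. Pick any edge $xy$ with $x\in X$; one exists because $|N_J(\{x\})|\ge1$. Delete $x$ and $y$. In $J-x-y$, every $U\subseteq X\setminus\{x\}$ loses at most one neighbour, so Hall's condition survives. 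Induction gives a matching covering $X\setminus\{x\}$, and adding $xy$ finishes.
\item[(ii)] \emph{Some nonempty proper $U_0\subsetneq X$ is tight:} $|N_J(U_0)|=|U_0|$.
\end{itemize}

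\emph{Sufficiency: the tight case (ii).} Consider $J_1=J[U_0\cup N_J(U_0)]$. Hall's condition holds in $J_1$, since neighbourhoods of subsets of $U_0$ lie inside $N_J(U_0)$. By induction, $J_1$ has a matching covering $U_0$. Next consider $J_2=J-U_0-N_J(U_0)$ with classes $X\setminus U_0$ and $Y\setminus N_J(U_0)$. For $W\subseteq X\setminus U_0$ we have
\[
|N_{J_2}(W)| = |N_J(W\cup U_0)|-|N_J(U_0)| \ge |W|+|U_0|-|U_0| = |W|.
\]
So induction gives a matching of $J_2$ covering $X\setminus U_0$. The union of the two matchings covers $X$.

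The only delicate point is the neighbourhood identity in $J_2$: $N_{J_2}(W)=N_J(W\cup U_0)\setminus N_J(U_0)$. This follows directly from the definitions, so I expect no real obstacle.
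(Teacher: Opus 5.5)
Your proof is correct. It is the Halmos--Vaughan induction, and the details hold up. A nonempty proper tight set $U_0$ makes both $J_1$ and $J_2$ strictly smaller instances. Hall's condition passes to $J_1$ because $N_J(U)\subseteq N_J(U_0)$ for $U\subseteq U_0$. The identity $N_{J_2}(W)=N_J(W)\setminus N_J(U_0)=N_J(W\cup U_0)\setminus N_J(U_0)$ gives the count in $J_2$.

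The paper does not prove Theorem~\ref{thm:hall}; it only cites~\cite{BM08}. If I recall the cited textbook correctly, its argument runs through a maximum matching $M^*$ and Berge's theorem. If some $u\in X$ is uncovered, let $Z$ be the set of vertices reachable from $u$ by $M^*$-alternating paths. Then $S=Z\cap X$ satisfies $N_J(S)=Z\cap Y$ and $|N_J(S)|=|S|-1$. That route needs the augmenting-path theorem, but it is algorithmic and produces an explicit Hall violator whenever no covering matching exists. Yours is self-contained and uses nothing beyond neighbourhood counting.

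One by-product of your case split is relevant to this paper. The step in case (i) works for every $x\in X$ and every $y\in Y$, adjacent or not. So when $|X|=|Y|$, it shows directly that the surplus condition $|N_J(U)|\ge|U|+1$ for all $\varnothing\ne U\subsetneq X$ forces $J-x-y$ to have a perfect matching for every such pair. That is one of the implications packaged in Theorem~\ref{thm:clmstructure}.
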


\begin{theorem}\label{thm:clmstructure}
\textup{\cite{CLM13, LP09, M0149}}
A connected bipartite graph $J[X,Y]$ with $|X|=|Y|=q\ge2$ is
matching covered if and only if $J-x-y$ has a perfect matching
for every $x\in X$, $y\in Y$, if and only if 
$|N_J(U)|\ge |U|+1$ ($\varnothing\ne U\subsetneq X$).
A bipartite matching covered graph $G[A,B]$ of rank $r\ge3$ is
a brace if and only if
\begin{equation}\label{eq:bracehall}
 |N_G(U)|\ge |U|+2\quad(1\le |U|\le r-2).
\end{equation}
If $G$ is a brace, every $G-a-b$, with $a\in A$, $b\in B$, is
matching covered.
\end{theorem}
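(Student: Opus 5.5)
We would prove the three assertions in order, with Hall's theorem (Theorem~\ref{thm:hall}) as the only tool. The plan is to turn each perfect-matching property into a neighbourhood inequality by a counting argument.

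\emph{Matching covered.} We close a cycle of implications.

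First suppose $J$ is matching covered and $\varnothing\ne U\subsetneq X$. Hall's theorem gives $|N_J(U)|\ge|U|$, and we claim equality is impossible. If $|N_J(U)|=|U|$, every perfect matching matches $U$ onto $N_J(U)$. Since $J$ is connected and $U\ne X$, some edge joins $N_J(U)$ to $X\setminus U$. That edge lies in no perfect matching, a contradiction.

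Next suppose $|N_J(U)|\ge|U|+1$ for all such $U$, and fix $x\in X$, $y\in Y$. For $\varnothing\ne U\subseteq X-x$, the set $U$ is a proper subset of $X$. Hence $|N_{J-y}(U)|\ge|N_J(U)|-1\ge|U|$. Hall's theorem, applied with $X-x$ and $Y-y$ both of size $q-1$, then shows that $J-x-y$ has a perfect matching.

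Finally, if every $J-x-y$ has a perfect matching, then each edge $xy$ extends to a perfect matching, so $J$ is matching covered.

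\emph{Brace.} Let $G[A,B]$ be matching covered of rank $r\ge3$. We first show that (\ref{eq:bracehall}) for subsets of $A$ implies the same inequality for subsets of $B$. If $W\subseteq B$ has $1\le|W|\le r-2$ and $|N(W)|\le|W|+1$, put $U=A\setminus N(W)$. Then $N(U)\subseteq B\setminus W$, and a count gives $|N(U)|\le|U|+1$ with $1\le|U|\le r-2$.

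\emph{Brace implies (\ref{eq:bracehall}).} Take $U\subseteq A$ with $1\le|U|\le r-2$. The matching covered property gives $|N(U)|\ge|U|+1$, so suppose $|N(U)|=|U|+1$ and set $S=U\cup N(U)$. Every perfect matching matches $U$ into $N(U)$. Exactly one vertex of $N(U)$ is then matched outside $S$, and no other edge of a perfect matching crosses the cut, so $\partial(S)$ is tight. Its shores have sizes $2|U|+1\ge3$ and $2r-2|U|-1\ge3$. This contradicts $G$ being a brace.

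\emph{(\ref{eq:bracehall}) implies brace.} Suppose $\partial(S)$ is a tight cut with both shores nontrivial. Tightness and parity force, say, $|S\cap A|=|S\cap B|+1$. Every perfect matching then matches $S\cap B$ inside $S$ and uses exactly one cut edge, which is incident with $S\cap A$. Hence an edge from $S\cap B$ to $A\setminus S$ would lie in no perfect matching; since $G$ is matching covered, no such edge exists. Put $W=S\cap B$, so $N(W)\subseteq S\cap A$ and $|N(W)|\le|W|+1$. Nontriviality of the shores gives $|S|\ge3$ and $|\overline S|\ge3$, hence $1\le|W|\le r-2$. This contradicts the $B$-side version of (\ref{eq:bracehall}).

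\emph{$G-a-b$ is matching covered.} Let $G$ be a brace and $a\in A$, $b\in B$. For $\varnothing\ne U\subsetneq A-a$ we have $1\le|U|\le r-2$, so
\[
|N_{G-b}(U)|\ge|N_G(U)|-1\ge|U|+1 .
\]
By the first part, it remains only to check that $G-a-b$ is connected. Suppose instead it has a component $K$ that is not all of $G-a-b$.
\begin{itemize}
\item If $K$ meets $A-a$, set $U=K\cap(A-a)$. Then $N_{G-b}(U)\subseteq K\cap B$.
\item Because $G-a-b$ has a perfect matching, $|K\cap A|=|K\cap B|$.
\item Since $K$ is a proper component, $U$ is a nonempty proper subset of $A-a$, so the displayed inequality gives $|K\cap B|\ge|N_{G-b}(U)|\ge|U|+1=|K\cap A|+1$, contradicting the previous item.
\item The degenerate case in which $K$ meets only $B$ is excluded the same way, using the $B$-side condition.
\end{itemize}

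The main obstacle we expect is the tight-cut direction: showing that a nontrivial tight cut forces a Hall-deficient set, which needs the parity argument on the shore. The connectivity check in the last step is routine but must not be skipped.
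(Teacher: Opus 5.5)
Your proof is correct. The paper does not prove this statement at all: it quotes it from \cite{CLM13, LP09, M0149} and uses it as a black box. So there is no route to compare. What you give is a self-contained derivation from Theorem~\ref{thm:hall} alone, which removes the dependence on those citations at the cost of about a page.

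Three points deserve an explicit line if you write it out.

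\begin{enumerate}
\item In transferring \eqref{eq:bracehall} from $A$ to $B$, the bounds $|U|\ge1$ and $|N(U)|\le|U|+1$ follow from the count you mention. The remaining bound $|U|\le r-2$ needs $|N(W)|\ge2$. This follows from the surplus condition of the first part, applied on the $B$ side to the nonempty proper subset $W$.
\item In the connectivity check, properness of $U$ does not follow merely from $K$ being a proper component; it comes from your second item. If $U=A-a$, then $|K\cap B|=|K\cap A|=r-1$, so $K=G-a-b$. The same count also disposes of a component missing $A-a$, since it would then be empty. The $B$-side condition is therefore not needed there.
\item Your argument for the last assertion uses $r\ge3$, through \eqref{eq:bracehall} and the requirement $q\ge2$ in the first part. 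That is the intended range. At rank two the assertion is immediate, and at rank one it is degenerate.
\end{enumerate}
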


\emph{Bicontracting} a degree-two vertex with distinct neighbours
means contracting its two
incident edges. Its two neighbours are merged, and no parallel copies
are discarded.

\begin{lemma}\label{lem:bicontraction}
\textup{\cite[Lemma~2.1]{CLM13}}
Bicontraction in a bipartite matching covered graph preserves
matching coverage and the number of perfect matchings.
\end{lemma}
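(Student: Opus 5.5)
This is Lemma~\ref{lem:bicontraction}; let $H$ be a bipartite matching covered graph and $v$ a vertex of degree two with distinct neighbours $u,w$. Let $H'$ be obtained by bicontracting $v$, and let $x$ be the vertex formed by merging $u$, $v$, $w$. Since $u$ and $w$ lie in the same colour class, opposite to $v$, the graph $H'$ is bipartite. Its colour classes arise from those of $H$ by deleting $v$ from one class and replacing $u,w$ by $x$ in the other, so $H'$ is balanced. No parallel copies are discarded, so every edge of $H$ other than $vu$ and $vw$ corresponds to exactly one edge of $H'$. Each edge $uz$ or $wz$ with $z\ne v$ becomes an edge $xz$.

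\emph{Bijection on perfect matchings.} Let $M$ be a perfect matching of $H$. Exactly one of $vu$, $vw$ lies in $M$, say $vu$. Then $w$ is matched by some edge $wz$ with $z\ne v$. Define $\phi(M)=M\setminus\{vu\}$, viewed in $H'$. In $\phi(M)$, the vertex $x$ is covered exactly once, namely by the image of $wz$, and every other vertex is covered exactly once. So $\phi(M)$ is a perfect matching of $H'$.

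Conversely, let $M'$ be a perfect matching of $H'$. The unique edge of $M'$ at $x$ comes from an edge at exactly one of $u$, $w$ in $H$, say $w$. Add $vu$ to obtain a perfect matching of $H$. This map inverts $\phi$, so $\phi$ is a bijection and $m(H)=m(H')$.

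\emph{Matching coverage.} Every edge $e$ of $H'$ is the image of an edge of $H$ other than $vu$, $vw$. Since $H$ is matching covered, some perfect matching $M$ of $H$ contains $e$, and then $\phi(M)$ contains $e$. It remains to check that $H'$ is connected and has an edge.

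Contraction preserves connectivity. For the edge, note that $H$ has at least four vertices: $H$ is balanced and contains $u$, $v$, $w$. If $H'$ had no edge, then $H'$ would be the single vertex $x$, so $H$ would consist only of $u$, $v$, $w$, which is impossible. Hence $H'$ is matching covered.

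The only point needing care, and the main obstacle, is bookkeeping with multigraphs. Edges $uz$ and $wz$ to a common neighbour $z$ become parallel edges $xz$, and these must be kept as distinct copies. That is exactly why the bicontraction is defined so that no parallel copies are discarded, and it is what makes $\phi$ injective.
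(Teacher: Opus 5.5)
Your proof is correct: the bijection $M\mapsto M\setminus\{vu,vw\}$, together with your checks of connectivity, existence of an edge and edge coverage, is the standard direct argument for this lemma, which the paper quotes from de Carvalho, Lucchesi and Murty without giving a proof. It is the degree-two case of the tight-cut reasoning the paper itself uses for the shore $\{u,v,w\}$ in \eqref{eq:extension}; here every perfect matching of the contraction has exactly one extension, so the count is preserved exactly.
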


Repeated bicontraction of $J$ gives a \emph{retract}, denoted by 
$\ret J$; fix a sequence when necessary. Each step lowers rank by 
one and size by two, replacing neighbour degrees $d_1,d_2$ by 
$d_1+d_2-2$. For a matching covered graph of minimum degree at 
least two, the process stops at a double edge or at minimum degree 
at least three.

Let $\cF$ be the class of bipartite matching covered multigraphs of
minimum degree at least three. An edge is \emph{removable} if its
deletion preserves matching coverage. A member $J$ is \emph{minimal
in $\cF$} if $J-e\notin\cF$ for every edge copy $e$.
An edge is \emph{solitary} if it belongs to exactly one perfect
matching. Let $\mu(J)$ be the largest multiplicity of a solitary
parallel class (possibly of multiplicity one), or zero if none exists.

\begin{theorem}\label{thm:clmlinear}
\textup{\cite[Theorems~4.1 and~4.11, Table~4.1]{CLM13}}
For $J\in\cF$ of rank $q$,
\begin{equation}\label{eq:clmlinear}
 m(J)\ge4q-8,\qquad m(J)\ge4q-4\quad(q\ge7).
\end{equation}
Moreover,
\begin{equation}\label{eq:clmmu}
                     m(J)\ge4q-4+\mu(J)\quad(q\ge7).
\end{equation}
\end{theorem}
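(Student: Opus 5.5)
The plan is induction on the rank $q$, peeling off one edge at a time. At each step we show that the number of perfect matchings drops by at least four for every unit of rank lost. The tool is the standard decomposition
\[
 m(J)=m(J-e)+m(J-u-v)\qquad(e=uv),
\]
combined with Lemma~\ref{lem:bicontraction}, which lets us replace $J-e$ by $\ret(J-e)$ without changing the count.

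First I would establish a structural step: every $J\in\cF$ of large enough rank has a removable edge $e=uv$ such that $J'=\ret(J-e)$ is again in $\cF$.
\begin{itemize}
\item If $u$ and $v$ have degree at least four, then $J-e\in\cF$ directly.
\item Otherwise $J-e$ has one or two vertices of degree two. Bicontracting them lowers the rank by one or two.
\end{itemize}
I would take this from the thin-edge theory for bipartite matching covered graphs (the analogue of the Carvalho--Lucchesi--Murty thin edge theorem), with multigraphs allowed.

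Next, the cofactor has to pay for the rank drop. Write $q'$ for the rank of $J'$, so $q'\in\{q-2,q-1,q\}$. By induction $m(J')\ge4q'-c$, so it suffices to show
\[
 m(J-u-v)\ge4(q-q').
\]
The intended route uses Theorem~\ref{thm:clmstructure}: choose $e$ so that $J-u-v$ is matching covered (or at least has a large matching-covered piece). A matching covered bipartite graph of rank $q-1\ge2$ with minimum degree at least two has at least two perfect matchings, since it contains an even cycle. Applying the induction hypothesis to $\ret(J-u-v)$ then gives a linear number of perfect matchings, which is far more than the constant $4(q-q')$ needed. The real danger is that $J-u-v$ collapses under retraction to a small graph, for instance a double edge. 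I expect this to be the main obstacle. My first attempt would be to choose $e$ inside a nontrivial parallel class or next to a vertex of maximum degree, where neighbourhood expansion (Theorem~\ref{thm:hall}) prevents such a collapse. The remaining configurations, where every candidate edge has a small cofactor, I would treat as a bounded list of exceptional graphs. These can be checked by hand, and I expect exactly them to account for the weaker constant $4q-8$ below rank $7$.

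The base cases $q\le6$ come from a finite enumeration of the minimal members of $\cF$ (Table~4.1 of \cite{CLM13}). The improvement to $4q-4$ for $q\ge7$ comes from checking that no extremal configuration of rank at least $7$ survives the inductive step.

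For \eqref{eq:clmmu}, let $P$ be a solitary parallel class of multiplicity $\mu$. Each copy of $P$ lies in exactly one perfect matching, and these matchings are pairwise distinct. Hence deleting $\mu-1$ of the copies lowers $m$ by exactly $\mu-1$ while keeping rank and membership in $\cF$, since the degrees of the ends stay at least three or can be repaired by bicontraction. Applying \eqref{eq:clmlinear} to the reduced graph gives the bound with $\mu-1$. The extra unit then requires a final check that the reduced graph is not extremal, i.e.\ that a graph with $m=4q-4$ has no solitary edge.
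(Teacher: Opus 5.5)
\textbf{Verdict: there is a genuine gap.} It sits at the step you yourself flagged, and the scheme as stated cannot close it. The sufficient condition $m(J-u-v)\ge4(q-q')$ fails for \emph{every} removable edge of an infinite family of extremal members of $\cF$. So no bounded list of exceptional graphs can rescue it.

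Take vertices $a,b$ and, for $1\le i\le n$, vertices $x_i,y_i$ joined by a single edge $x_iy_i$, a double edge $ay_i$ and a double edge $x_ib$. Call this graph $J_n$.
\begin{itemize}
\item $J_n$ lies in $\cF$, has rank $q=n+1$, has $\mu(J_n)=0$, and has $m(J_n)=4n=4q-4$.
\item Deleting $x_iy_i$ forces $ay_i$ and $x_ib$, so $x_iy_i$ is not removable. Hence every removable edge is a copy of some $ay_i$ or $x_ib$.
\item Such an edge has a cubic endpoint, so $q'=q-1$. Yet its cofactor is $2$. In fact $J_n-a-y_i$ is not even matching covered, so neither choosing a parallel class nor choosing a maximum-degree endpoint helps.
\item Your chain of inequalities therefore yields only $m(J_n)\ge(4q'-4)+2=4q-6$.
\end{itemize}
The two missing matchings are visible in $J'=\ret(J_n-e)$. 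The merged vertex acquires a solitary double edge to $b$, so $\mu(J')=2$ and $m(J')=4q'-2=4q'-4+\mu(J')$. Separately, your claim that $\ret(J-u-v)$ always has linearly many perfect matchings is false: the two hubs of an extended biwheel leave a cycle, which has two.

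Consequently \eqref{eq:clmmu} cannot be appended at the end. The induction hypothesis must be the $\mu$-strengthened one: you need to show that $m(J)-4q+4-\mu(J)$ does not increase under the chosen reduction. The paper does not reprove this theorem; it quotes it from \cite{CLM13}. Its own arguments, however, display exactly this mechanism. In Lemma~\ref{lem:dichotomy} a nonsimple graph is reduced either by deleting a nonsolitary parallel copy or by contracting the tight cut $\{u,v,w\}$ of \eqref{eq:extension}. Then \eqref{eq:selected} shows that the matching increment plus the gain in $\mu$ is at least $2|C_w|\ge4$, which is precisely the rank payment your argument lacks.

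Your treatment of \eqref{eq:clmmu} has a second, independent gap.
\begin{itemize}
\item Reducing to $\mu=1$ by deleting $\mu-1$ copies of a solitary class is fine: by Lemma~\ref{lem:solitary} the endpoints keep degree at least three.
\item The remaining claim, that no graph with $m=4q-4$ has a solitary edge, is just the case $\mu=1$ of the statement itself. You give no argument for it.
\end{itemize}
The base cases $q\le6$ and the improvement from $4q-8$ to $4q-4$ at $q\ge7$ are likewise only announced, not carried out.
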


\begin{lemma}\label{lem:clmminimal}
\textup{\cite[Corollary~4.3 and Lemma~4.4]{CLM13}}
Every nontrivial parallel class of a minimal member of $\cF$ has
a cubic endpoint. A simple minimal member has a cubic vertex
whose three incident edges are removable.
\end{lemma}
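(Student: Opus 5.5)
The lemma has two parts, and I would treat them separately. The first is a one-line exchange argument. The second needs some structure theory of tight cuts.

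\emph{Parallel classes.} Let $J$ be minimal in $\cF$ and let $e,e'$ be two copies joining $u$ and $v$. Suppose both $u$ and $v$ have degree at least four. Then $J-e$ has minimum degree at least three. It is connected, because $e'$ still joins $u$ and $v$. Each perfect matching of $J$ through $e$ becomes a perfect matching of $J-e$ when $e$ is replaced by $e'$. Hence every edge of $J-e$ still lies in a perfect matching, and $J-e\in\cF$. This contradicts minimality, so one of $u,v$ is cubic.

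\emph{Simple minimal members.} Minimality says that every removable edge of $J$ has a cubic endpoint, since otherwise its deletion stays in $\cF$. We need a single cubic vertex all three of whose edges are removable. I would induct on the rank of $J$ through the tight cut decomposition.

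\emph{Brace case.} Suppose first that $J$ is a brace. Its rank is at least three, because the minimum degree is three and $J$ is simple. In a brace of rank at least three every edge is removable. One can see this from \eqref{eq:bracehall}: deleting an edge lowers each $|N(U)|$ by at most one, so the condition $|N(U)|\ge|U|+1$ of Theorem~\ref{thm:clmstructure} survives. Minimality then forces every edge to have a cubic endpoint, so a cubic vertex exists. All three of its edges are removable.

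\emph{Non-brace case.} Otherwise choose a nontrivial tight cut $C=\partial(X)$ whose shore $X$ is minimal under inclusion. The contraction $J_1=J/\overline X$ is then a brace, with contraction vertex $x$. I would use the standard fact that an edge not in $C$ is removable in $J$ if and only if it is removable in the $C$-contraction that contains it. Every vertex of $J_1$ other than $x$ keeps its degree from $J$, so $J_1\in\cF$ up to parallel edges at $x$.

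The aim is a cubic vertex $w\ne x$ of $J_1$ that has no neighbour in $\overline X$. Its three edges avoid $C$, are removable in the brace $J_1$, and hence are removable in $J$. To find such a $w$, I would apply the brace argument inside $J_1$. Every edge of $J_1$ not incident with $x$ is removable in $J$, so it must have a cubic endpoint. A shore of size at least three has edges avoiding $x$. Therefore cubic vertices other than $x$ exist.

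\emph{Main obstacle.} The hard step is ruling out that every cubic vertex of $X$ sends an edge into $C$. Edges of $C$ may fail to be removable in $J$, because they can be forced by the other side. For this I would try a counting argument in the brace $J_1$. The edges of $J_1$ avoiding $x$ form a bipartite graph in which every edge has a cubic endpoint, and by \eqref{eq:bracehall} it has enough edges. Only $d_{J_1}(x)$ vertices touch $C$. Comparing these two counts should leave a cubic vertex not adjacent to $x$.

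If that comparison fails, the fallback is to repeat the argument on the other shore. One would pick the tight cut so that both contractions are braces, which is possible for the laminar family of tight cuts, and run a parity and degree count over both sides.
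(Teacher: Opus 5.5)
Your exchange argument for parallel classes is correct, and so is the skeleton of the simple case. For an inclusion-minimal nontrivial tight shore $X$, the contraction $J_1=J/\overline X$ is a brace. An edge of $J[X]$ is removable in $J$ exactly when it is removable in $J_1$, because with $C$ tight, $J-e$ is matching covered if and only if both $C$-contractions are. The gap is the step you call the main obstacle: you never produce a cubic vertex of $X$ with no edge in $C$. You only say the count ``should'' work. The fallback does not help either, since a tight cut whose two contractions are both braces exists only when the tight cut decomposition has exactly two members. As written, the non-brace case is unproved.

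The missing observation is that one colour class of $X$ never meets $C$. Write $|X\cap A|=|X\cap B|+1=k+1$. Every perfect matching matches all of $X\cap B$ inside $X$, so by matching coverage no edge joins $X\cap B$ to $\overline X$. Thus every edge of $C$ has its $X$-end in $X\cap A$, and any cubic vertex of $X\cap B$ has all three edges in $J[X]$ and does the job.

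A vertex of $X\cap B$ has three distinct neighbours, all in $X\cap A$, so $k\ge2$ and $J_1$ has rank at least three. You need this before claiming every edge of $J_1$ is removable.

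Now suppose no vertex of $X\cap B$ is cubic. Each edge of $J[X]$ is removable in $J$, so by minimality it has a cubic end, necessarily in $X\cap A$. Every vertex of $X\cap A$ has a neighbour in $X\cap B$, since it has three distinct neighbours in the brace $J_1$. Hence all of $X\cap A$ is cubic. Counting edges of $J[X]$ from both sides gives $4k\le|E(J[X])|=3(k+1)-|C|\le3k$, because $|C|=d_{J_1}(x)\ge3$. This is a contradiction.

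One smaller repair is needed in your brace case. The argument that $|N(U)|$ drops by at most one relies on \eqref{eq:bracehall}, which covers only $|U|\le r-2$. For $|U|=r-1$, add that the other end of the deleted edge keeps two distinct neighbours, hence one in $U$.
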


\begin{lemma}\label{lem:solitary}
\textup{\cite[Lemmas~4.7 and~4.9]{CLM13}}
In a member of $\cF$ with more than two vertices, each endpoint of
a solitary edge has a nontrivial parallel class to another vertex,
and two adjacent solitary edges are parallel.
\end{lemma}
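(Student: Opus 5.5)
We prove the two assertions separately. Both rest on one standard fact: a bipartite graph $H$ with a unique perfect matching $M'$ has a vertex of degree one in each colour class. To see this, orient the vertices of one class: for $a,a'$ in that class, put an arc $a\to a'$ whenever $a$ is adjacent to the $M'$-partner $b'$ of $a'$, with $a'\ne a$. A directed cycle would give an $M'$-alternating cycle, and hence a second perfect matching. So the digraph is acyclic and has a sink and a source. A sink $a$ is adjacent only to its own partner, so $a$ has degree one. If $a'$ is a source, then no vertex other than $a'$ is adjacent to its partner $b'$, so $b'$ has degree one in the other class. Neither sink nor source needs the graph to be connected or matching covered, so the fact applies directly below.

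\emph{First assertion.} Let $J\in\cF$ have more than two vertices, and let $e=uv$ be solitary with unique perfect matching $M\ni e$. Then $H=J-u-v$ is nonempty and has the unique perfect matching $M-e$. Take a vertex $w$ of $H$ of degree one, lying in the colour class of $v$. In $J$, all neighbours of $w$ lie in the class of $u$, and the only vertex of that class missing from $H$ is $u$. Since $d_J(w)\ge3$ and only one edge at $w$ survives in $H$, at least two edge copies join $w$ to $u$. Because $w\ne v$, this is a nontrivial parallel class at $u$ to another vertex. The symmetric choice, a degree-one vertex of $H$ in the class of $u$, gives the statement for $v$.

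\emph{Second assertion.} Suppose $e=uv$ and $f=uw$ are both solitary, with $v\ne w$, and let $M\ni e$ and $N\ni f$ be their perfect matchings. Then $M\oplus N$ contains an alternating cycle $C$ through $e$ and $f$. The plan is to build a third perfect matching containing $e$ or $f$, giving a contradiction. First I apply the digraph argument to $H_1=J-u-v$ with matching $M-e$, and to $H_2=J-u-w$ with matching $N-f$, both acyclic by solitariness. In $H_1$, every sink in the class of $v$ is joined to $u$ by a parallel class, and is adjacent only to $u$ and its $M$-partner. Take such a sink $z$, with $M$-partner $z'$. If $z\ne w$, then $N$ matches $z$ either to $u$ or to $z'$. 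The first is impossible, since $N$ already matches $u$ to $w$ and $z\ne w$. So $N$ contains $zz'$, and swapping the edge at $u$ for a parallel copy of $uz$ should let me reroute along $C$ to obtain a new perfect matching through $f$ or $e$. If $z=w$, then $w$ is joined to $u$ by several parallel copies. In that case $f$ cannot be solitary unless $N$ is unique among matchings containing $u$–$w$ copies, and I would use the degree-one structure of $H_2$ symmetrically to force $v$ to be such a sink as well.

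The main obstacle is this second assertion: turning the local parallel-class information at $u$ into an alternating cycle that avoids or reuses $e$ and $f$. I would first try to show that the sinks of the two acyclic digraphs must coincide with $v$ and $w$ themselves. Combined with minimum degree three, that would force both $v$ and $w$ to have all their non-matching edges going to $u$, so that $J$ has just the four vertices of $C$. In that situation, adjacent solitary edges that are not parallel should be directly contradictory.
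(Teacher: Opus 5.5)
\textbf{First assertion.} Your proof is correct. In a multigraph, the claim that a sink has degree one also needs its matching edge to have multiplicity one. That holds because parallel copies are distinguished, so a second copy would give a second perfect matching.

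\textbf{Second assertion.} This part is not proved, and the route you sketch does not lead to a proof. Replacing the edge at $u$ in $N$ by a copy of $uz$ leaves $w$ exposed and covers $z$ twice, since $zz'\in N$. Even if some rerouting repaired this, the result would contain $uz$ rather than $e$ or $f$, so it would contradict nothing. The case $z=w$ also gives nothing: several copies of $uw$, each solitary, are perfectly consistent. The closing plan (sinks coinciding with $v,w$, hence four vertices) is a hope rather than an argument. The underlying problem is that a sink carries only local information. The contradiction needs the degree condition at \emph{every} vertex on $u$'s side.

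\textbf{Missing idea: count the perfect matchings of $J-u-w$.}
\begin{itemize}
\item Write $M-e=\{x_iy_i\}$ for the unique perfect matching of $J-u-v$, with $x_i$ on $u$'s side, and let $w=y_j$.
\item Take a perfect matching of $J-u-w$ and form its symmetric difference with $M-e$ inside $J-u$. This is a $w$--$v$ alternating path together with alternating cycles lying in $J-u-v$. Uniqueness of $M-e$ excludes the cycles.
\item Hence $m(J-u-y_j)=r_j$, the number of alternating paths $y_jx_jy_{i_1}x_{i_1}\cdots x_{i_k}v$ counted with edge multiplicities.
\item Orient your digraph on $u$'s side. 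It is acyclic, so walks are paths and the $r_j$ can be computed from the sinks by
$r_j=\beta_j+\sum_i w_{ji}r_i$, where $\beta_j=|E_J(x_j,v)|$ and $w_{ji}=|E_J(x_j,y_i)|$.
\item Since $x_jy_j$ is simple, $d_J(x_j)=1+\beta_j+\sum_i w_{ji}\ge3$. Induction from the sinks then gives $r_j\ge\beta_j+\sum_i w_{ji}\ge2$.
\end{itemize}
Thus $m(J-u-w)\ge2$ for every $w\ne v$ on $v$'s side, so no copy of $uw$ is solitary.

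The paper does not prove this lemma itself; it cites \cite{CLM13}. The argument above is the same path-weight recurrence the paper uses for $p_i\ge2$ in the proof of Lemma~\ref{lem:unitidentity}, applied in reversed form to the numbers $r_i$.
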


The dimension bound \cite{L8772}, valid for every bipartite matching covered graph
$J$ of rank $q$, is
\begin{equation}\label{eq:dimension}
                         m(J)\ge |E(J)|-2q+2.
\end{equation}

An edge $e$ of a brace is \emph{thin} if $G-e$ is matching covered
and $\ret(G-e)$ is a brace, and
\emph{strictly thin} if this retract is simple. Its index is the number
of cubic endpoints, and belongs to $\{0,1,2\}$. For a strictly thin
edge of index $i$, the retract has rank $r-i$.

\begin{theorem}\label{thm:strictthin}
\textup{\cite{CLM15, M0149}}
A simple brace of order greater than four outside the families
of prisms, M\"obius ladders, biwheels and extended biwheels has a
strictly thin edge.
\end{theorem}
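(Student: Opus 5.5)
The plan is to derive Theorem~\ref{thm:strictthin} from the weaker thin edge theorem of de~Carvalho, Lucchesi and Murty: every brace other than $K_2$, $C_4$ and the smallest exceptional cases has a thin edge. We then upgrade a suitably chosen thin edge to a strictly thin one. This upgrade is the content of~\cite{CLM15,M0149}, and the argument would follow their scheme.

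\textbf{Step 1: when a thin edge fails to be strictly thin.} Fix a thin edge $e=uv$ of the simple brace $G$. The graph $G-e$ is matching covered. By Lemma~\ref{lem:bicontraction}, $\ret(G-e)$ is obtained by bicontracting those endpoints of $e$ that are cubic in $G$. Since $G$ is simple, a parallel pair can appear in the retract only in one way: the two neighbours $x,y$ of a cubic endpoint, say $u$, have a common neighbour $w\ne u$ in $G-e$. Then $u,x,w,y$ span a $4$-cycle through $u$ avoiding $e$.

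So a thin edge fails to be strictly thin exactly when one of its cubic endpoints lies on such a $4$-cycle. This reduces the task to a local configuration: a cubic vertex $u$, its neighbours $x,y,v$, and a vertex $w$ adjacent to both $x$ and $y$.

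\textbf{Step 2: induction on the order of $G$.} Suppose $G$ has order greater than four, lies outside the four families, and has no strictly thin edge. Choose a thin edge $e$ for which the retract $H=\ret(G-e)$ is as large as possible, breaking ties by minimising the number of parallel classes of $H$.

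The brace $H$ either lies in one of the four families or, by induction, has a strictly thin edge $f$. In the second case we lift $f$ back to an edge $f'$ of $G$. We then use the Hall-type characterisation~\eqref{eq:bracehall} of Theorem~\ref{thm:clmstructure} to check that $f'$ is thin in $G$. Step~1 then shows that $f'$ fails to be strictly thin only if it creates a new $4$-cycle through a cubic endpoint. In that case we exchange the roles of $e$ and $f'$ to get a thin edge with a larger retract or fewer parallel classes, contradicting the choice of $e$.

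\textbf{Step 3, the main obstacle: $H$ lies in one of the families.} This is where I expect the real difficulty. Here $G$ is recovered from a prism, M\"obius ladder, biwheel or extended biwheel (possibly with a doubled edge) by an expansion: re-inserting $e$ and un-bicontracting at most two vertices. The plan is to enumerate these expansions, using the high symmetry of the four families to keep the cases few.

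For each expansion we either exhibit an explicit strictly thin edge, typically a rim edge far from the expansion or a spoke of the biwheel, or we show that $G$ itself belongs to one of the four families. Biwheels and extended biwheels are the delicate case: adding a hub edge or expanding a hub produces many $4$-cycles at cubic rim vertices. There one must check directly that every thin edge of $G$ has a cubic endpoint on a $4$-cycle only when $G$ is again a biwheel or extended biwheel.

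The small cases, order at most eight, would be settled by direct inspection.
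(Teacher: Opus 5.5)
Your proposal has a genuine gap. It is a roadmap rather than a proof, and its first concrete claim is false.

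\textbf{Step~1 misses a case.} When both ends of $e=uv$ are cubic, there is a second way a parallel pair arises. Write $N(u)\setminus\{v\}=\{b_1,b_2\}$ and $N(v)\setminus\{u\}=\{a_1,a_2\}$. Two disjoint edges $a_1b_1$ and $a_2b_2$ both become copies of the edge joining the two merged vertices. This is the third condition in~\eqref{eq:strictconditions}. It involves no $4$-cycle through a cubic endpoint avoiding $e$: the relevant $4$-cycles $ub_1a_1v$ and $ub_2a_2v$ both contain $e$.

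The rungs of a bipartite prism $C_{2k}\square K_2$ with $k\ge3$ are thin edges of exactly this kind. Neither end of a rung lies on a $4$-cycle avoiding it, yet the retract is the smaller prism with a doubled rung. Your criterion would therefore declare these rungs strictly thin, contradicting the very reason prisms are exceptional.

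\textbf{Step~2 has no working mechanism.} Under your standing assumption, every thin edge has index at least one and a nonsimple retract. So $H$ is never simple, and the induction hypothesis, which concerns simple braces, does not apply to it.

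Passing to the underlying simple graph $\widetilde H$ does not help. An edge $f$ thin in $\widetilde H$ need not be thin in $H$: a vertex of degree two in $\widetilde H-f$ can have degree three in $H-f$ through a parallel copy. Its nontrivial three-vertex tight cut then survives, and no bicontraction is available to remove it. The lift $f'$ is also ambiguous when $f$ meets a merged vertex, and its thinness in $G$ is asserted rather than derived.

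Most importantly, the exchange is unsupported. If $f'$ is thin but not strictly thin, nothing in your choice of $e$ forces $f'$ to have smaller index, or equal index with fewer parallel classes. So no contradiction follows.

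\textbf{What remains is the theorem itself.} The exchange, together with the enumeration postponed to Step~3, is the whole substance of McCuaig's brace generation theorem. In Step~3 the expansions of biwheels form infinite families, and the claimed $4$-cycle dichotomy is precisely what must be proved. The paper does not reprove this result; it imports the statement from~\cite{CLM15,M0149} without proof. Once you set aside the parts you defer to those references, your sketch supplies no independent argument.
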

The first two families are cubic. We will repeatedly partition perfect 
matchings at a vertex. In a simple bipartite graph this gives
\[
                 m(G)=\sum_{b\in N_G(a)}m(G-a-b).
\]
In a multigraph the summands must instead be counted once for each 
incident edge copy.

\begin{lemma}\label{lem:linearaux}
Let $J\in\cF$ have rank $q$ and size $E$. Then
\begin{equation}\label{eq:edgeq}
                              m(J)\ge E-q.
\end{equation}
If $J$ is simple, then
\begin{equation}\label{eq:simplebounds}
                   m(J)\ge6q-24,\qquad m(J)\ge E-14.
\end{equation}
\end{lemma}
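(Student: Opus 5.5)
The plan is to prove all three bounds by induction on the size $E$, deleting removable edges and retracting. The dimension bound~\eqref{eq:dimension} and Theorem~\ref{thm:clmlinear} handle the base cases.

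\emph{The bound~\eqref{eq:edgeq}.} If $E\le 5q-8$ then $E-q\le 4q-8$, and~\eqref{eq:clmlinear} gives the claim. So we may assume $J$ has many edges, and in particular $J$ is not minimal in $\cF$. Take an edge copy $e=xy$ with $J-e\in\cF$. Partitioning the perfect matchings by whether they use this copy gives
\[
 m(J)=m(J-e)+m(J-x-y)\ge m(J-e)+1 ,
\]
and induction yields $m(J)\ge (E-1)-q+1$.

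The delicate case is when every deletable edge leaves a degree-two vertex. Then $J-e$ is matching covered but lies outside $\cF$, and we pass to $\ret(J-e)\in\cF$. Each bicontraction lowers rank by one and size by two, so the quantity $E-q$ drops by one extra unit per bicontraction. Here I would use the second summand $m(J-x-y)$ to pay for this loss. Since $e$ had a cubic endpoint, $J-x-y$ contains the neighbours of that endpoint at degree $\ge2$. Applying~\eqref{eq:dimension} to the matching covered graph $J-x-y$ should give a term of order $E-2q$, and this ought to beat the loss. When $E$ is close to $3q$, Theorem~\ref{thm:clmlinear} covers it directly. I expect this balancing to be the main obstacle. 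Lemma~\ref{lem:clmminimal} is needed to control where the cubic vertices sit, and Lemma~\ref{lem:solitary} to handle parallel classes.

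\emph{The simple bounds~\eqref{eq:simplebounds}.} For $m(J)\ge6q-24$, I would argue by induction on $q$ using Lemma~\ref{lem:clmminimal}. A simple minimal member has a cubic vertex $v$ all of whose three edges are removable. Deleting one such edge $e=vw$ and retracting at $v$ lowers the rank by one or two. Simplicity rules out the double-edge stopping case except in bounded rank. The partition $m(J)=m(J-e)+m(J-v-w)$ then gains at least $6$ per unit of rank. The reason is that $J-v-w$ is matching covered of rank $q-1$, so~\eqref{eq:clmlinear} makes it large. Non-minimal $J$ reduce to this case by deleting edges while staying in $\cF$. Small $q$ (say $q\le 7$) are absorbed into the constant $24$.

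Finally, $m(J)\ge E-14$ follows by interpolation. For $E\le 7q-10$, we have $E-14\le 6q-24$ when $E\le 6q-10$. Above that, we run the edge-deletion induction from~\eqref{eq:edgeq}, now with base case $6q-24$ instead of $4q-8$. Each deleted edge costs one, and the offset $q$ is replaced by the constant $14$. This works because the base already exceeds $E-14$ once the surplus edges are removed.
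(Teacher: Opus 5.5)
Your non-minimal deletion step is fine and matches the paper. The rest has genuine gaps.

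For \eqref{eq:edgeq}, the reduction ``$E>5q-8$, so $J$ is not minimal in $\cF$'' is unsupported, and in fact false. $K_{3,3}$ is minimal in $\cF$ with $E=9>5q-8=7$, and $m(K_{3,3})=6=E-q>4q-8$, so it falls into neither of your cases. The minimal case is where the work lies, and there you only hope that $m(J-x-y)\ge1+j$. Nothing you cite gives this:
\begin{itemize}
\item \eqref{eq:dimension} needs $J-x-y$ to be matching covered, which fails in general since $J$ need not be a brace. If $xy$ is one of the three tight-cut edges in a splice of two copies of $K_{3,3}$, the other two cut edges lie in no perfect matching of $J-x-y$.
\item Already in $K_{3,3}$, each edge lies in only two perfect matchings, while there $j=2$.
\end{itemize}
The missing idea is to average at a cubic vertex whose three edges $e_1,e_2,e_3$ are removable (Lemma~\ref{lem:clmminimal}). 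Every perfect matching omits exactly two of them, so $2m(J)=\sum_i m(J-e_i)$. Each retract loses at most $3$ from $E-q$, so induction gives $2m(J)\ge3(E-q-3)$. This suffices once $E-q\ge9$, leaving only $K_{3,3}$ and $K_{4,4}$ minus a perfect matching. The nonsimple minimal case needs its own argument. The paper contracts the tight cut with shore $\{u,v,w\}$, where $u$ is cubic with a double edge to $v$, and gets $m(J)=m(K)+\sum_{f\in C_w}m_K(f)\ge m(K)+2$. In your language, the deleted copy of $uv$ lies in at least $|C_w|\ge2$ perfect matchings.

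For $m(J)\ge6q-24$, the claim that $J-v-w$ ``is matching covered of rank $q-1$, so \eqref{eq:clmlinear} makes it large'' fails. $J-v-w$ is not in $\cF$: every cubic neighbour of $v$ or $w$ drops to degree two, so its retract can have rank about $q-d(w)$. By the splice example above, it need not even be matching covered, and that example is a simple cubic minimal member whose edges are all removable. So the needed bound $m(J-v-w)\ge6j$, up to $12$, is unproved. The averaging identity settles this without induction: \eqref{eq:clmlinear} on the three retracts, each of rank at least $q-2$, gives $2m(J)\ge3(4q-16)$. One then passes to a minimal spanning member.

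For $m(J)\ge E-14$, your phrase ``the base already exceeds $E-14$ once the surplus edges are removed'' assumes a linear upper bound on the size of minimal members of $\cF$. You never establish one. The paper proves $|E|\le5q-2$ for them:
\begin{itemize}
\item Contract a fixed perfect matching and orient the other edges; the digraph is strongly connected.
\item An in-arborescence and an out-arborescence with a common root, plus the matching, give a spanning matching covered subgraph with at most $3q-2$ edges.
\item At most $2q$ further edges restore minimum degree three.
\end{itemize}
Then $4q-8$ (for $q\le8$) and $6q-24$ (for $q\ge8$) give $E-m(J)\le14$ for simple minimal $J$. Each restored edge adds at least one new perfect matching by Theorem~\ref{thm:clmstructure}.
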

\begin{proof}
We first prove~\eqref{eq:edgeq} by induction on $E$. Rank one is
immediate. If $J$ is not minimal in $\cF$, delete an edge while
remaining in $\cF$ and observe that restoring it adds at least one
perfect matching.

If $J$ is minimal and nonsimple, Lemma~\ref{lem:clmminimal}
gives a cubic vertex $u$ with two edges to $v$ and one to $w\ne v$.
Three edges to the same vertex would force a rank-one component,
since every matching would pair these two vertices.
The set $\{u,v,w\}$ is a shore
of a tight cut: $u$ must be matched to one of $v,w$, and the other
vertex must be matched outside the set. Contracting it gives
$K\in\cF$ of rank $q-1$ and size $E-3$. In fact, every edge
of $K$ is the image of an edge in a perfect matching of $J$, and
the new vertex has degree $d(v)+d(w)-3\ge3$; all other degrees
are unchanged. Write $C_v,C_w$ for the cut edges originally incident
with $v,w$. A matching of $K$ using an edge of $C_v$ has one
extension, using $uw$, whereas one using an edge of $C_w$ has two,
using either copy of $uv$. Thus
\begin{equation}\label{eq:extension}
                    m(J)=m(K)+\sum_{f\in C_w}m_K(f),
\end{equation}
where $m_K(f)$ counts the matchings containing the specified copy
$f$. Since $K$ is matching covered, each summand is at least one.
Thus $|C_w|\ge2$ and the inductive bound
$m(K)\ge(E-3)-(q-1)=E-q-2$ prove~\eqref{eq:edgeq}.

If $J$ is simple and minimal, choose the cubic vertex supplied by
Lemma~\ref{lem:clmminimal}, with incident edges $e_1,e_2,e_3$. Each
$J-e_i$ has at most two degree-two vertices; they are nonadjacent,
and their bicontractions create no new degree-two vertex. Its retract
therefore has size $E-1-2j_i$ and rank $q-j_i$, where $j_i\le2$.
The nonadjacency follows from simplicity: the only possible centres
are the ends of the deleted edge. Identifying two vertices of
degree at least three creates degree at least four, so no further
centre appears. Each perfect matching of $J$ omits exactly two of
$e_1,e_2,e_3$. Thus induction gives
\[
 2m(J)=\sum_{i=1}^3m(J-e_i)\ge3(E-q-3).
\]
This suffices if $E-q\ge9$, by comparing $3(E-q-3)/2$ with $E-q$.
Otherwise $E\ge3q$ implies $q\le4$.
The only remaining possibilities are $K_{3,3}$ and the cubic graph
$K_{4,4}$ minus a perfect matching, with respectively six and nine
perfect matchings. Both satisfy~\eqref{eq:edgeq}.

Using~\eqref{eq:clmlinear} instead on the same three retracts gives
$2m(J)\ge3(4q-16)$ and hence the first inequality
in~\eqref{eq:simplebounds}. Passing to a minimal spanning member
proves it for every simple $J\in\cF$.

Every minimal member of $\cF$ has at most $5q-2$ edges. At rank one
the minimal graph consists of three parallel edges. For $q\ge2$,
fix a perfect matching, contract its edges, and orient the other
edges from the first colour class to the second. The resulting
digraph is strongly connected: matching coverage puts every
nonmatching edge on an alternating cycle, so every directed arc
lies on a directed cycle; connectivity then forces a single
strong component. Retain an in-arborescence and an
out-arborescence with a common root. Together with the matching
these give a spanning matching covered subgraph with at most $3q-2$
edges and minimum degree at least two. Indeed, their union is
strongly connected, and a directed cycle through any retained arc
gives a perfect matching using the corresponding edge. Every
vertex has both an incoming and an outgoing arc. Add at most one edge for
each vertex still of degree two, using at most $2q$ edges. Strong
connectivity, and hence matching coverage, is retained. Minimality
proves the size bound.

For a simple minimal $J$, combining $E\le5q-2$ with $m(J)\ge4q-8$
when $q\le8$, and with $m(J)\ge6q-24$ when $q\ge8$, gives
$E-m(J)\le14$. Restore deleted edges to obtain the second inequality
in~\eqref{eq:simplebounds} in general: each restored edge increases
the size by one and contributes at least one perfect matching
containing that edge and absent before its restoration. Indeed,
the spanning matching covered subgraph has a matching after the
ends of the restored edge are deleted, by
Theorem~\ref{thm:clmstructure}; adjoining that edge gives the new matching.
\end{proof}

For a bipartite multigraph $J[X,Y]$, let $b_X(J)$ count its cubic
vertices in $X$ incident with parallel edges.

\begin{lemma}\label{lem:potential}
If $J[X,Y]\in\cF$, then
\[
                         m(J)\ge |E(J)|-b_X(J)-14.
\]
\end{lemma}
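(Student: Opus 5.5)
We argue by induction on $|E(J)|$, following the structure of the proof of Lemma~\ref{lem:linearaux}. The base case and the simple case are already available: if $J$ is simple then $b_X(J)=0$, and the claim is the second inequality of~\eqref{eq:simplebounds}. If $J$ has rank one, it is a parallel class of at least three copies, so $m(J)=|E(J)|$ and there is nothing to prove.

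\emph{Non-minimal $J$.} Suppose $J$ is not minimal in $\cF$. We delete an edge copy $e$ with $J-e\in\cF$. Restoring $e$ adds at least one perfect matching, namely one containing $e$, and raises $|E|$ by one. The quantity $b_X$ can only drop when we pass from $J-e$ to $J$: adding $e$ raises a degree or merges nothing new into a cubic vertex. One must check the case where $e$ creates a parallel class at a cubic vertex of $X$. But a vertex that is cubic in $J$ had degree two in $J-e$, which contradicts $J-e\in\cF$. So the endpoints of $e$ are not cubic in $J$ unless they were already cubic in $J-e$. Hence
\[
b_X(J)\ge b_X(J-e)-\text{(losses)},
\]
and in fact $b_X(J)\le b_X(J-e)+0$ suffices, since an endpoint that is cubic in $J$ cannot exist. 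The inequality $m(J)\ge m(J-e)+1$ then transfers the bound.

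\emph{Minimal nonsimple $J$.} Lemma~\ref{lem:clmminimal} gives a cubic vertex $u$ with two edges to $v$ and one to $w$, and we contract the tight cut with shore $\{u,v,w\}$ as before, obtaining $K$ with rank $q-1$ and size $E-3$. By~\eqref{eq:extension},
\[
m(J)\ge m(K)+|C_w|.
\]
The bookkeeping splits by the side containing $u$.

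If $u\in X$, then $u$ counts in $b_X(J)$. Contraction removes $u$, $v$, $w$ and creates a new vertex on the $X$-side, namely the merged $v,w$ with $v,w\in Y$. It may also destroy the parallel incidence of at most one further cubic vertex of $X$, or create one new counted vertex. We aim for
\[
b_X(K)\le b_X(J)-1+\varepsilon,
\]
with any extra term $\varepsilon$ absorbed by $|C_w|-2\ge0$ or by the new vertex having degree at least four. Then
\[
m(J)\ge |E|-3-b_X(K)-14+|C_w|\ge |E|-b_X(J)-14
\]
provided $|C_w|\ge 2-(1-\varepsilon)$, i.e.\ $|C_w|\ge1+\varepsilon$.

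If $u\in Y$, then $v,w\in X$ and the merged vertex lies in $Y$. Removing $v$ and $w$ from $X$ can only lower $b_X$, so $b_X(K)\le b_X(J)$. We need $|C_w|\ge3$, or else some slack. When $|C_w|=2$, $w$ is cubic, and its remaining edges are the edge to $u$ and two cut edges.

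\emph{The main obstacle} is this last subcase: $u\in Y$ and $|C_w|=2$. Here the naive count loses one. I would first try to show that such a $w$ is itself counted in $b_X(J)$ (a cubic $X$-vertex with parallel edges) and disappears, which would give $b_X(K)\le b_X(J)-1$. If $w$ has no parallel edges, I would instead extract one more matching. One option is to use that some $f\in C_w$ has $m_K(f)\ge2$, unless both edges of $C_w$ are solitary in $K$. In the solitary case, Lemma~\ref{lem:solitary} makes the two adjacent solitary edges parallel, so $w$'s two cut edges go to a common vertex, giving $w$ a parallel class. This contradicts the assumption, so the case closes.

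\emph{Minimal simple $J$.} Here $b_X=0$ and the bound is again~\eqref{eq:simplebounds}.
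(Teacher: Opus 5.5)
There is a genuine gap, first in your non-minimal step, where the inequality runs the wrong way. From $m(J-e)\ge|E(J)|-1-b_X(J-e)-14$ and $m(J)\ge m(J-e)+1$ you need $b_X(J-e)\le b_X(J)$, but you prove $b_X(J)\le b_X(J-e)$. The needed direction can fail. Suppose the $X$ endpoint of $e$ has degree four in $J$ and carries a double class. Then it becomes a counted cubic vertex of $J-e$, so $|E|-b_X$ drops by two. If $e$ is solitary, it yields only one new perfect matching. Deleting an arbitrary removable edge therefore does not work, and the paper does not use minimality at all. If no parallel class has a cubic endpoint, it deletes a parallel copy, which stays in $\cF$. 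A nonsolitary copy lies in at least two matchings, which suffices. The one bad case is a solitary copy whose $X$ endpoint has degree four. There Lemma~\ref{lem:solitary} gives that endpoint two double classes, and a copy from the other class is nonsolitary, so that copy is deleted instead. Otherwise the paper contracts at a cubic parallel endpoint.

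Second, in the contraction you have the side of the merged vertex reversed: for $u\in X$ it lies in $Y$, and for $u\in Y$ it lies in $X$. For $u\in X$ the real danger is that every cubic $X$ vertex adjacent to both $v$ and $w$ acquires parallel edges to the merged vertex. Thus $b_X(K)\le b_X(J)-1+j$, where the number $j$ of such vertices is bounded only by $|C_w|$, not by one. You then need an increment of $2+j$; your requirement $|C_w|\ge1+\varepsilon$ should read $|C_w|\ge2+\varepsilon$. The bare bound $m(J)\ge m(K)+|C_w|$ does not supply this. The missing idea is that each newly counted vertex uses its own edge of $C_w$, and that edge is nonsolitary in $K$. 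Indeed, by Lemma~\ref{lem:solitary}, if it were solitary its cubic endpoint would need a nontrivial parallel class to a second vertex, impossible at degree three. Hence $\sum_{f\in C_w}m_K(f)\ge|C_w|+j\ge2+j$; if $K$ has rank one, no vertex is newly counted. For $u\in Y$ your conclusion $b_X(K)\le b_X(J)$ survives, but for another reason. The merged $X$ vertex has degree $d(v)+d(w)-3$, so it is cubic only when $v$ is, and then $v$, with its double edge to $u$, leaves $b_X$. Your handling of the subcase $|C_w|=2$ there matches the paper.
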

\begin{proof}
Induct on the size. The simple case follows from
Lemma~\ref{lem:linearaux}, and rank one is immediate. Write
$\Phi(J)=|E(J)|-b_X(J)$. For a reduction to $K$, it suffices to
prove $m(J)-m(K)\ge\Phi(J)-\Phi(K)$, since the desired bound
then follows from induction for $K$. Suppose first
that no parallel class has a cubic endpoint. Deleting a parallel
copy preserves $\cF$: its endpoints retain degree at least three,
and any matching using the deleted copy can use a remaining
parallel copy. Only its $X$ endpoint can become newly counted by
$b_X$, so $\Phi$ decreases by at most two. A nonsolitary copy
belongs to at least two perfect matchings, which is enough for
the induction inequality. The only difficulty is a
solitary copy whose $X$ endpoint has degree four and becomes cubic
with parallel edges. That endpoint has a second parallel class, by
Lemma~\ref{lem:solitary}, and therefore has two double classes. A
copy in the other class is nonsolitary by the same lemma.
Delete that copy instead.

Otherwise take a cubic parallel endpoint $u$ and use the contraction
in~\eqref{eq:extension}. If $u\in X$, let $j$ be the number of
surviving cubic $X$ vertices newly incident with parallel edges.
Then $b_X(K)\le b_X(J)-1+j$. Each such vertex uses a distinct edge
of $C_w$: new parallelism requires an edge to each of $v,w$
before they are identified. This edge is nonsolitary in $K$:
otherwise another parallel class at the same cubic vertex would
be required, which is impossible at degree three. If $K$
has rank one, no newly counted vertex is possible. Hence the
increment in~\eqref{eq:extension} is at least $|C_w|+j\ge2+j$,
whereas $\Phi(J)-\Phi(K)=3-b_X(J)+b_X(K)\le2+j$.

Suppose $u\in Y$. Only the $X$ vertices $v,w$ are affected, and the
merged vertex has degree $d(v)+d(w)-3$. If both are noncubic, the
potential decreases by three and $|C_w|\ge3$. If $v$ is cubic and
$w$ noncubic, the decrease is two. If $w$ is cubic and has parallel
edges, the decrease is at most two. In the remaining cases $w$ is
cubic without parallel edges, and the decrease is at most three.
The two edges of $C_w$ then have distinct other endpoints; they
cannot both be solitary in $K$ by Lemma~\ref{lem:solitary}, so their matching counts sum to at
least three. Induction completes the proof.
\end{proof}

\begin{lemma}\label{lem:fibercharge}
Let $H[X,Y]$ be simple and matching covered, with minimum degree at
least two. Let $t$ be the number of its degree-two vertices in $X$.
For any $h\in Y$,
\[
 b_X(\ret H)\le2t+\sum_{y\in Y\setminus\{h\}}(d_H(y)-3)_+.
\]
\end{lemma}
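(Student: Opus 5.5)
The plan is to track the retract through the partition of $V(H)$ into \emph{fibres}. Each vertex $z$ of $\ret H$ is the image of a set $F(z)$ of original vertices of one colour class, glued by the degree-two vertices of the other class that were bicontracted into $z$. Since no copies are discarded, we have
\[
d_{\ret H}(z)=\sum_{v\in F(z)}d_H(v)-2(|F(z)|-1).
\]
Since $H$ is simple, a parallel class in $\ret H$ between an $X$-fibre $F(x)$ and a $Y$-fibre $F(y)$ needs two edges of $H$ joining these fibres. Hence $|F(x)|\ge2$ or $|F(y)|\ge2$. I will bound the number of cubic $X$-vertices $x$ of $\ret H$ incident with parallel edges by charging each one either to degree-two vertices of $X$ (budget $2t$) or to degree excess in $Y$ (the sum).

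First I treat $X$-fibres. A fibre $F(x)$ with $|F(x)|\ge2$ is glued by degree-two vertices of $Y$. For $\ret H$ to be cubic there, the displayed formula forces members of $F(x)$ of degree two in $H$, since two members of degree at least three already give degree at least four. Each such member is a degree-two vertex of $X$ and contributes to $t$. A degree-two vertex of $X$ lies in exactly one $X$-fibre, so charging $x$ to one of them costs at most $t$ in total.

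It remains to handle cubic $x$ with $F(x)=\{x_0\}$ a single vertex with $d_H(x_0)=3$. Its parallel class then goes to a fibre $F(y)$ with $k=|F(y)|\ge2$. This fibre was glued by $k-1$ degree-two vertices of $X$, and $d(y)=\sum_{v\in F(y)}d_H(v)-2k+2$. Each such $x$ uses two edges into $F(y)$, so at most $d(y)/2$ of them attach to $F(y)$. Writing $d_H(v)-3\le(d_H(v)-3)_+$ gives
\[
\frac{d(y)}2\le\frac12\sum_{v\in F(y)}(d_H(v)-3)_+ +\frac k2+1.
\]
I then use the $2(k-1)$ units of budget from the gluing vertices of $F(y)$: $2(k-1)\ge k/2+1$ whenever $k\ge2$. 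These gluing vertices are distinct from those charged in the previous step, because those lie in $X$-fibres of size at least two while these lie in no surviving $X$-vertex. Summing over $Y$-fibres then gives the bound, with the factor $\tfrac12$ leaving slack.

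The main obstacle is the exclusion of $h$ from the sum, together with the slack accounting. If $h\in F(y)$, I lose up to $\tfrac12(d_H(h)-3)_+$ from that one fibre. To recover it I will use the unused slack: half of the excess sum plus $2(k-1)-k/2-1$. If that is not enough, I will refine the count $d(y)/2$. Because $H$ is matching covered, not all edges at $F(y)$ can go to doubled cubic $X$-vertices unless $\ret H$ has rank one, which is the degenerate case to check separately. I must also check that $h$ lies in at most one fibre, so only one fibre loses a term. Finally I must verify that degree-two $X$ vertices are never charged twice, using that each lies in exactly one fibre, so each is charged once.
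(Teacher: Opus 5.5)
Your framework matches the paper's proof in most respects: the fibres are the paper's sets $S_k(\cdot)$, and the degree formula is the same. Charging a cubic $X$-vertex with $|F(x)|\ge2$ to a degree-two member, and using a disjoint gluing budget $2(k-1)$ per $Y$-fibre, are also the paper's steps. The gap is exactly where you locate it, and the remedies you propose do not close it.

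For the fibre $F(y)$ containing $h$, your count $\lfloor d(y)/2\rfloor$ contains the term $\tfrac12(d_H(h)-3)_+$. This term is unbounded, while everything you propose to absorb it with is independent of $d_H(h)$. The local slack is only $\tfrac12\sum_{v\in F(y)\setminus\{h\}}(d_H(v)-3)_+ +\tfrac{3k}{2}-3$. The matching-coverage refinement improves $d(y)/2$ by at most a constant.

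Concretely, suppose $F(y)=\{h,v\}$ with $d_H(v)=3$, $t=1$, and every vertex of $Y\setminus\{h\}$ has degree at most three. Then the right side of the lemma is $2$, and no slack remains elsewhere, since the unique degree-two $X$-vertex is the gluing vertex. Your bound for this fibre, however, is $(d_H(h)+1)/2$.

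The missing idea is to count by original endpoints instead of by $d(y)$. Fix a representative $r_y\in F(y)$, taking $r_y=h$ whenever $h\in F(y)$. Since $F(x)=\{x_0\}$ and $H$ is simple, the parallel copies from $x$ to $y$ have distinct original endpoints in $F(y)$. So at least one copy ends at a vertex of $F(y)\setminus\{r_y\}$. Charge $x$ to that edge; different vertices $x$ receive different edges.

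Each member $v$ of $F(y)$ loses its edge to the centre at its first identification. Hence at most $d_H(v)-1\le2+(d_H(v)-3)_+$ edges of $\ret H$ have original endpoint $v$. The number of such $x$ attached to $F(y)$ is therefore at most
\[
 2(k-1)+\sum_{v\in F(y)\setminus\{r_y\}}(d_H(v)-3)_+ ,
\]
and $F(y)\setminus\{r_y\}\subseteq Y\setminus\{h\}$. The gluing budget $2(k-1)$ covers this exactly, so neither the factor $\tfrac12$ nor any special treatment of $h$ is needed. This is precisely the paper's argument via the set $U$ of non-representatives.
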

\begin{proof}
Put $R=\ret H$ and fix a sequence of bicontractions
$H=H_0,H_1,\ldots,H_k=R$. We regard each bicontraction as deleting
a degree-two vertex $z$ and identifying its two neighbours $u,v$
to a new vertex $w$, retaining all parallel edges. The neighbours
are distinct unless the graph is already the terminal double edge.
This description gives the same graph as contracting both edges at
$z$, and makes the two colour classes explicit.

For each $v\in V(H_i)$ define a set $S_i(v)\subseteq V(H)$ as
follows. Initially $S_0(v)=\{v\}$. At the operation just described,
set $S_{i+1}(w)=S_i(u)\cup S_i(v)$, omit the set $S_i(z)$, and
leave all other sets unchanged. Thus the sets $S_i(v)$ are nonempty
and pairwise disjoint, and each lies entirely in $X$ or entirely in
$Y$. They record only the vertices in the same original colour
class that have been identified. The degree formula for an
identification, $d_{H_{i+1}}(w)=d_{H_i}(u)+d_{H_i}(v)-2$, gives
by induction
\[
                 d_{H_i}(v)-2
                    =\sum_{x\in S_i(v)}(d_H(x)-2).
\]
In particular, whenever a degree-two vertex $z$ is deleted, every
member of $S_i(z)$ had degree two in $H$.

Let $a_0$ be the number of vertices of $X$ in the sets omitted
when degree-two vertices on the $X$ side are deleted. These omitted
sets are disjoint, so $a_0\le t$. If $x$ is a cubic vertex on the
$X$ side of $R$, the displayed identity implies that $S_k(x)$
contains exactly one vertex of degree three in $H$, and all its
other members have degree two in $H$. Consequently, at most
$t-a_0$ such vertices $x$ satisfy $|S_k(x)|\ge2$.

For each vertex $y$ on the $Y$ side of $R$, choose
$r_y\in S_k(y)$, taking $r_y=h$ whenever $h\in S_k(y)$, and put
\[
                  U=\bigcup_{y\in Y(R)}
                             \bigl(S_k(y)\setminus\{r_y\}\bigr),
\]
where $Y(R)$ denotes the colour class of $R$ corresponding to $Y$.
Forming a set $S_k(y)$ requires $|S_k(y)|-1$ identifications on
the $Y$ side. Each of these deletes a degree-two vertex on the
$X$ side and omits a nonempty set counted in $a_0$. Hence
$|U|=\sum_{y\in Y(R)}(|S_k(y)|-1)\le a_0$, and $h\notin U$.
Every vertex of $U$ loses at least one of its original incident
edges when it first participates in an identification. Thus the
number of edges of $R$ whose original $Y$ endpoint belongs to $U$
is at most
\[
 \sum_{y\in U}(d_H(y)-1)
 \le 2|U|+\sum_{y\in U}(d_H(y)-3)_+
 \le 2a_0+\sum_{y\in Y\setminus\{h\}}(d_H(y)-3)_+.
\]

Now let $x$ be a cubic vertex on the $X$ side of $R$ incident
with parallel edges, with $S_k(x)=\{x_0\}$. Since $H$ is simple,
two parallel edges from $x$ to a vertex $y$ of $R$ have distinct
original endpoints in $S_k(y)$. At least one of these endpoints
lies in $U$. Choose one such edge for each $x$. Different vertices
$x$ give different edges, so the preceding bound counts all these
vertices. Adding the at most $t-a_0$ cubic vertices with
$|S_k(x)|\ge2$, we obtain
\[
 b_X(R)\le(t-a_0)+2a_0+
           \sum_{y\in Y\setminus\{h\}}(d_H(y)-3)_+
       \le2t+\sum_{y\in Y\setminus\{h\}}(d_H(y)-3)_+.
\]
This proves the lemma.
\end{proof}

\begin{lemma}\label{lem:cofactorbasic}
Let $G[A,B]$ be a simple brace of rank $r$ and size $s$, and let
$ab\in E(G)$ have endpoint degrees $d,k$. Then
\begin{align}
 m(G-a-b)&\ge4r-4d-4k-4,\label{eq:coarsecofactor}\\
 m(G-a-b)&\ge s-r-2d-2k+4,\label{eq:edgecofactor}\\
 m(G-a-b)&\ge s-d-k-2r+5.\label{eq:dimcofactor}
\end{align}
If the retract of $G-a-b$ has rank at least seven, the constant
$-4$ in~\eqref{eq:coarsecofactor} can be replaced by zero.
Moreover, for every vertex of degree $d$,
\begin{equation}\label{eq:dimensionsum}
 m(G)\ge(d-1)s-d^2-2rd+2d+3r.
\end{equation}
\end{lemma}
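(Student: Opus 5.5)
The plan is to pass to $H=G-a-b$ and apply the linear estimates of Section~\ref{sec:prelim}, either to $H$ itself or to its retract. By Theorem~\ref{thm:clmstructure}, $H$ is matching covered. It has rank $r-1$ and size $s-d-k+1$, since the edge $ab$ is counted in both degrees. Inequality~\eqref{eq:bracehall} with $|U|=1$ shows that $G$ has minimum degree at least three. Hence $H$ has minimum degree at least two, and its degree-two vertices are among the at most $(d-1)+(k-1)$ cubic neighbours of $a$ or $b$.

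Inequality~\eqref{eq:dimcofactor} is immediate from the dimension bound~\eqref{eq:dimension} applied to $H$:
\[
m(H)\ge(s-d-k+1)-2(r-1)+2 .
\]

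For the other two bounds, form $\ret H$ by $j$ bicontractions; by Lemma~\ref{lem:bicontraction} these preserve $m$ and matching coverage. The key count is $j\le d+k-2$. A bicontraction deletes a degree-two vertex $z$ and merges its neighbours $u,v$ into a vertex of degree $d(u)+d(v)-2$. This new vertex has degree two only if both $u$ and $v$ had degree two, and in that case three degree-two vertices are replaced by one. Thus the number of degree-two vertices strictly decreases at every step, which gives the claim.

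The retract then has rank $q=r-1-j\ge r-d-k+1$ and size $E=s-d-k+1-2j$. Either it lies in $\cF$, or it is the terminal double edge of rank one with two perfect matchings.

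In the first case, \eqref{eq:clmlinear} gives $m\ge4q-8\ge4r-4d-4k-4$, and $m\ge4q-4\ge4r-4d-4k$ when $q\ge7$, which is the refinement. Inequality~\eqref{eq:edgeq} gives
\[
m\ge E-q=s-r-d-k+2-j\ge s-r-2d-2k+4 .
\]
In the double-edge case $q=1$, so $r-d-k\le0$ and the right side of~\eqref{eq:coarsecofactor} is at most $-4<2$. Also $m=2\ge E-q=1$, so~\eqref{eq:edgecofactor} follows by the same computation. The only delicate point is to check that the count $j\le d+k-2$ holds along every bicontraction sequence. The monotone decrease of the number of degree-two vertices handles this.

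For~\eqref{eq:dimensionsum}, let the vertex be $a$ and partition its perfect matchings by neighbour. Then apply~\eqref{eq:dimcofactor} to each neighbour $b$, of degree $k_b$:
\[
m(G)\ge d(s-d-2r+5)-\sum_{b\in N(a)}k_b .
\]
It remains to show $\sum_b k_b\le s-3r+3d$. The left side counts edges incident with $N(a)$. The $r-d$ vertices of $B\setminus N(a)$ each have degree at least three, and their edges avoid $N(a)$, so at least $3(r-d)$ edges are not counted. Substituting this bound and simplifying gives the stated expression.
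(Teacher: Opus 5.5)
Your proof is correct and follows the paper's argument essentially step for step: the count $j\le d+k-2$ from the strict decrease of degree-two vertices under bicontraction, \eqref{eq:clmlinear} and \eqref{eq:edgeq} applied to the retract with the terminal double edge checked separately, the dimension bound on $H$ for \eqref{eq:dimcofactor}, and summation at $a$ with $\sum_{b\in N(a)}d(b)\le s-3(r-d)$ for \eqref{eq:dimensionsum}. In the double-edge case you use $r\le d+k$ for the coarse bound, whereas the paper checks $m\ge4q-8$ and $m\ge E-q$ directly at $q=1$, $E=2$; the two checks are equivalent.
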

\begin{proof}
The graph $H=G-a-b$ is matching covered. Its degree-two vertices
must be cubic vertices of $G$ adjacent to one of the deleted
vertices, so there are at most $d+k-2$ of them. Each
bicontraction reduces their number by at least one: if both
neighbours of its centre have degree two, three such vertices are
replaced by one; otherwise no new degree-two vertex is created.
After $j\le d+k-2$ contractions the rank is $r-1-j$ and the size
is $s-d-k+1-2j$. Apply~\eqref{eq:clmlinear} and
\eqref{eq:edgeq} to this retract, whose matching count is still
$m(H)$, and replace $j$ by its upper bound in the resulting
decreasing expressions. A terminal double edge also satisfies
$m\ge4q-8$ and $m\ge E-q$ at $q=1$, $E=2$.
Equation~\eqref{eq:dimcofactor} is~\eqref{eq:dimension} applied
directly to $H$, with rank $r-1$ and size $s-d-k+1$.
Summing it over the neighbours of $a$ gives
$m(G)\ge d(s-d-2r+5)-\sum_{b\in N(a)}d(b)$.
The $r-d$ vertices outside $N(a)$
each have degree at least three, so
$\sum_{b\in N(a)}d(b)\le s-3(r-d)$. Its coefficient in the
summed bound is negative, which gives
\eqref{eq:dimensionsum}.
\end{proof}

\section{Unit cofactors and bounded excess}\label{sec:unit}

An opposite pair $(a,b)$ in a bipartite graph $J$ is a \emph{unit
pair} if $m(J-a-b)=1$. It need not be an edge.

\begin{lemma}\label{lem:unitidentity}
Let $J\in\cF$ have rank $q$ and a unit pair $(a,b)$. Let $\ell$
be the multiplicity of $ab$, and put $e=m(J)-\ell-4(q-1)$. Then
$e\ge0$ and
\begin{equation}\label{eq:nonhub}
            \sum_{v\notin\{a,b\}}(d_J(v)-3)\le e/2.
\end{equation}
Every edge of the unique perfect matching of $J-a-b$ belongs to at
least $m(J)-(e+4)^2$ perfect matchings of $J$. If
$B=m(J)-4q+4$, then
\begin{equation}\label{eq:hubdegree}
               d_J(a),d_J(b)\ge q-1-\lfloor B/4\rfloor.
\end{equation}
\end{lemma}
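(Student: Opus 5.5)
The plan is to reduce every count in the statement to counting paths in an acyclic digraph. Let $M_0$ be the unique perfect matching of $J-a-b$. If $N$ is a perfect matching of $J$ avoiding the edges $ab$, then $a$ and $b$ are covered by $N$ but not by $M_0$. So $N\cup M_0$ contains an $a$--$b$ path that alternates between $N$ and $M_0$. Any other component of $N\triangle M_0$ would be an $M_0$-alternating cycle in $J-a-b$, which contradicts the uniqueness of $M_0$. Hence $N=M_0\triangle P$ for a unique $M_0$-alternating $a$--$b$ path $P$ whose first and last edges are outside $M_0$. This gives
\[
 m(J)=\ell+\#\{M_0\text{-alternating } a\text{--}b \text{ paths}\},
\]
with edge copies distinguished.

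Next I build the digraph. I contract each edge of $M_0$ to a node, so there are $q-1$ internal nodes. I orient each non-$M_0$ edge of $J-a-b$ from the node containing its $X$-end to the node containing its $Y$-end, and I add a source $s=a$ and a sink $t=b$. I take $a\in X$, $b\in Y$; the other orientation is symmetric. Uniqueness of $M_0$ makes $D$ acyclic. The alternating $a$--$b$ paths are exactly the directed $s$--$t$ paths of $D$. Since $J\in\cF$, every internal node $w=xy$ has in-degree $d_J(y)-1\ge2$ and out-degree $d_J(x)-1\ge2$. Its total excess over $(2,2)$ is $(d(x)-3)+(d(y)-3)$. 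Matching coverage of $J$ puts every arc on some $s$--$t$ path. Write $P(D)$ for the number of $s$--$t$ paths, and $p^-(w)$, $p^+(w)$ for the numbers of $s$--$w$ and $w$--$t$ paths.

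The core claim is
\[
 P(D)\ge4(q-1)+2\sum_{v\notin\{a,b\}}(d_J(v)-3).
\]
This gives $e\ge0$ and \eqref{eq:nonhub} at once. I would prove it by induction on the number of internal nodes. Take the topologically first internal node $v_1$; all its in-arcs come from $s$, and there are $k\ge2$ of them. Absorb $v_1$ into $s$ by replacing each out-arc $v_1\to u$ by $k$ parallel arcs $s\to u$. This preserves acyclicity, the degree conditions at the other nodes, and $P$. The charging step is to extract $4+2(\text{excess at }v_1)$ from this operation. My plan for that step is a mild strengthening: replace the multiplication by $k$ by a single arc, and compare. Each out-arc of $v_1$ lies on an $s$--$t$ path, so collapsing the $k$ copies to one loses at least $(k-1)\cdot d^+(v_1)\ge2+(\text{excess})\cdot2$ paths. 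A second term of $2$ has to come from the degree bound at the new first node. I expect this bookkeeping, and in particular the base case with one internal node, to be the main obstacle.

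For the edge statement, fix $f\in M_0$ with node $w$. The perfect matchings of $J$ that avoid $f$ are exactly those whose path passes through $w$, and there are $p^-(w)p^+(w)$ of them. I would show $p^-(w),p^+(w)\le e+4$. Consider the subdigraph of paths from $s$ to $w$, with $w$ made the sink. Running the same inductive count there, each additional $s$--$w$ path beyond the baseline forces surplus over $4(q-1)$ in $P(D)$, because every such path extends to $t$. Hence $m(J)-m_J(f)\le(e+4)^2$.

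For \eqref{eq:hubdegree}, note that $d_J(a)=\ell+\#\{\text{arcs out of }s\}$. Every internal node not adjacent to $s$ has both its in-arcs (at least two) from internal nodes. I would show in the induction that such a node contributes at least $4$ beyond the baseline, rather than being absorbed for free. That gives at most $\lfloor B/4\rfloor$ such nodes, and $d_J(a)\ge q-1-\lfloor B/4\rfloor$. The argument for $b$ is symmetric, by reversing $D$.
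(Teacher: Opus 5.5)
Your reduction to directed $s$--$t$ paths, and your count $p^-(w)p^+(w)$ of the matchings avoiding $f$, are correct and agree with the paper. The hub-degree step, however, rests on a false claim. You bound the arcs out of $s$ by the number of internal nodes adjacent to $s$, and you assert that every node not adjacent to $s$ adds at least $4$ to $e$. Here is a counterexample. Take $X=\{a,x_1,x_2,x_3\}$ and $Y=\{b,y_1,y_2,y_3\}$, with edges $x_iy_i$ $(i=1,2,3)$, $x_1y_3$, $x_2y_3$, $x_1b$, $x_2b$, and doubled classes $ay_1$, $ay_2$, $x_3b$. Then $J\in\cF$, $(a,b)$ is a unit pair with $\ell=0$, and $m(J)=12=4(q-1)$, so $e=B=0$. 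Yet the node of $x_3y_3$ receives no arc from $s$. The lemma still holds here only because $d_J(a)=4$: the doubled arcs out of $s$ compensate. This situation is typical, not exotic. For simple $J$ the topologically first node would have in-degree at most one, so unit pairs in $\cF$ occur only with parallel classes, and $d_J(a)$ must be counted with multiplicity. The paper does this through $d_J(a)=\ell+2n+\sum_iU_i-E_0$ with $n=q-1$. It bounds the internal arc weight by $E_0\le n+e/4$, using $e\ge2\sum_i(U_i+V_i)+\sum_ik_il_i$ and the scalar inequality $k+l-2\le kl/2+U+V$.

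The other two steps are not established either.

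In the core induction, your inequality $(k-1)d^+(v_1)\ge2+2(\text{excess at }v_1)$ fails for $k=2$, $d^+(v_1)=3$, where it reads $3\ge4$. The further $2$ you need also has no source. Collapsing to single arcs uses only $p^+(u)\ge1$. Moreover, out-arcs of $v_1$ to $t$ become direct $s$--$t$ arcs, which a hypothesis counting all paths never rewards. A working version does three things:
\begin{itemize}
\item it keeps the $k$ copies;
\item it states the hypothesis for $s$--$t$ paths through at least one internal node (in the original digraph, that is all of them);
\item it charges against the in-degree increase of $k-1$ per arc from $v_1$ to an internal node.
\end{itemize}
If $o_t$ and $o_{\mathrm{int}}$ count the out-arcs of $v_1$ to $t$ and to internal nodes, the required inequality becomes $(k-2)(o_t+2o_{\mathrm{int}}-2)\ge0$.

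For the edge bound, rerunning the count on the $s$--$w$ subdigraph does not apply, since its nodes may have out-degree below two. In any case, a lower bound of this kind cannot give the upper bound $p^-(w)\le e+4$. Extending the $s$--$w$ paths to $t$ shows that $P$ is large, not that $P-4(q-1)$ is. You need an accounting in which $p^-(w)-4$ is a nonnegative part of $e$. The paper obtains one from the exact identity $e=\sum_ip_iV_i+\sum_jT_j$, where $T_j=p_j+\alpha_j-4=2U_j+\sum_iw_{ij}(p_i-2)\ge0$.
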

\begin{proof}
Write the unique matching as $\{x_i y_i:1\le i\le n\}$, where
$n=q-1$, with $x_i$ on the $a$ side. Its edges have multiplicity
one. For vertices $u,v$ of $J$, let $E_J(u,v)$ denote the set of
edge copies joining them. Define
\[
 \alpha_i=|E_J(a,y_i)|,\qquad
 \beta_i=|E_J(x_i,b)|,\qquad
 w_{ij}=|E_J(x_i,y_j)|\quad(i\ne j),
\]
and set $w_{ii}=0$. These numbers are zero when the corresponding
vertices are nonadjacent.

Construct a directed graph with vertices $S,T,1,\ldots,n$. For
each positive $\alpha_i$, include an arc $S\to i$ of weight
$\alpha_i$; for each positive $\beta_i$, include an arc $i\to T$
of weight $\beta_i$; and for each positive $w_{ij}$, include an
arc $i\to j$ of weight $w_{ij}$. Thus an arc weight is the number
of parallel edge copies in $J$ represented by that arc. The source
$S$ and sink $T$ themselves are not assigned weights; in particular,
$\alpha_i$ is the weight of the arc from $S$ to $i$, not a weight
of $S$. The $\ell$ copies of $ab$ are counted separately, and no
arc $S\to T$ is included. The internal digraph is acyclic, since
a directed cycle would give an alternating cycle with respect to
the unique matching of $J-a-b$.

Let
\[
             k_i=\sum_{j=1}^n w_{ji},\qquad
             l_i=\sum_{j=1}^n w_{ij}.
\]
These are the sums of the weights of the internal arcs entering
and leaving $i$, respectively. Since $x_i y_i$ has multiplicity
one,
\[
 d_J(y_i)=1+\alpha_i+k_i,\qquad
 d_J(x_i)=1+\beta_i+l_i.
\]
Hence $U_i=\alpha_i+k_i-2$ and $V_i=\beta_i+l_i-2$ are
nonnegative.

The weight of a directed path is the product of its arc weights.
It counts the choices of one original edge copy for each arc of
the path. Let $p_i$ be the sum of these products over all directed
$S$--$i$ paths. Decomposing paths according to their last arc gives
\[
                    p_i=\alpha_i+\sum_{j=1}^n w_{ji}p_j.
\]
Topological induction now gives
$p_i\ge\alpha_i+2k_i\ge2$: at an internal vertex with no
internal predecessor, $\alpha_i\ge2$, and the same conclusion
follows successively from the recurrence.

Every perfect matching of $J$ either uses one of the $\ell$ copies
of $ab$ or corresponds to a directed $S$--$T$ path together with
a choice of the edge copies represented by its arcs. Indeed, its
symmetric difference with the unique matching of $J-a-b$ consists
of an alternating $a$--$b$ path; any additional alternating cycle
would contradict uniqueness in $J-a-b$. Consequently, if $P$
denotes the total weight of all directed $S$--$T$ paths, then
\[
                         P=\sum_i p_i\beta_i,
                    \qquad m(J)=\ell+P.
\]
Summing the recurrence for $p_i$ gives the identity
\begin{equation}\label{eq:pathidentity}
 e=\sum_i p_iV_i+2\sum_i U_i+
                      \sum_{i\to j}w_{ij}(p_i-2).
\end{equation}
For completeness, summing that recurrence first yields
$\sum_i p_i=\sum_i\alpha_i+\sum_i p_i l_i$. Add this zero
difference to $P=\sum_i p_i\beta_i$ to obtain
$P=\sum_i\alpha_i+\sum_i p_i(\beta_i+l_i-1)$.
Using $\beta_i+l_i-1=V_i+1$ and subtracting $4n$ gives
$e=\sum_i p_iV_i+\sum_i(p_i+\alpha_i-4)$.
Finally the recurrence and $U_j=\alpha_j+k_j-2$ give
$p_j+\alpha_j-4=2U_j+\sum_i w_{ij}(p_i-2)$;
substitution proves~\eqref{eq:pathidentity}.
Each summand there is nonnegative, since
$U_i,V_i,w_{ij}\ge0$ and $p_i\ge2$. More specifically,
\[
 e\ge2\sum_i V_i+2\sum_i U_i
   =2\sum_{v\notin\{a,b\}}(d_J(v)-3).
\]
This proves both $e\ge0$ and~\eqref{eq:nonhub}; the factor two
in front of $\sum V_i$ uses $p_i\ge2$, not merely $p_i\ge0$.

For the bound on individual matching edges, put
\[
 T_j=p_j+\alpha_j-4
     =2U_j+\sum_iw_{ij}(p_i-2)\ge0.
\]
The same identity reads $e=\sum_i p_iV_i+\sum_j T_j$.
Consequently $T_j\le e$, and $\alpha_j\ge0$ gives $p_j\le e+4$.
Let $r_i$ be the sum of the path weights over all directed
$i$--$T$ paths. Reversing every arc exchanges $\alpha$ with
$\beta$ and $U$ with $V$, without changing $e$, so the same
argument gives $r_i\le e+4$.
An $S$--$i$ path and an $i$--$T$ path have no other common
vertex, since otherwise their union contains a directed cycle.
They therefore concatenate uniquely, and multiplication of their
weights shows that the total weight of paths through $i$ is
$p_i r_i$. In the matching correspondence, a path through $i$
replaces the matching edge $x_i y_i$; a path avoiding $i$ leaves
it in place. The $\ell$ matchings using $ab$ also contain that
edge. Hence exactly $p_i r_i\le(e+4)^2$ matchings omit it.

To prove~\eqref{eq:hubdegree}, observe that
$p_i\ge\alpha_i+2k_i=2+U_i+k_i$. Thus
the last sum in~\eqref{eq:pathidentity} is
$\sum_i l_i(p_i-2)\ge\sum_i k_i l_i$, while its first sum
is at least $2\sum_iV_i$. It follows that
\[
             e\ge2\sum_i(U_i+V_i)+\sum_i k_i l_i.
\]
For nonnegative integers $k,l,U,V$ with $k\le2+U$ and $l\le2+V$,
\[
                         k+l-2\le kl/2+U+V.
\]
It suffices to take $U=(k-2)_+$ and $V=(l-2)_+$; the four cases
$k,l\le2$, $k\le2\le l$, $l\le2\le k$, and $k,l\ge2$ verify it.
Writing $E_0=\sum_i k_i=\sum_i l_i$ for the total internal arc
weight and summing this scalar inequality gives
\[
 2E_0-2n\le\frac12\sum_i k_i l_i+\sum_i(U_i+V_i)\le e/2.
\]
Thus $E_0\le n+e/4$. Since $d_J(a)=\ell+\sum_i\alpha_i$
and $\alpha_i=2+U_i-k_i$, we obtain
\[
 d_J(a)=\ell+2n+\sum U_i-E_0\ge n+\ell-e/4,
\]
For $b$, use $d_J(b)=\ell+\sum_i\beta_i$ and
$\beta_i=2+V_i-l_i$. Finally $e=B-\ell$ implies
$n+\ell-e/4=n-B/4+5\ell/4\ge n-B/4$. An integer degree
at least $n-B/4$ is at least $n-\lfloor B/4\rfloor$, proving
\eqref{eq:hubdegree}.
\end{proof}

\begin{lemma}\label{lem:dichotomy}
Let $B\ge0$ be an integer and $Q(B)=(B+5)^2$. If $J\in\cF$ has
rank $q\ge Q(B)$ and $m(J)\le4q-4+B$, then either $J$ has a unit
pair or
\begin{equation}\label{eq:dichotomy}
                       |E(J)|-3q\le3(B+5)^2+2B.
\end{equation}
\end{lemma}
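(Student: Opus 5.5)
The plan is to pass to a minimal spanning subgraph, find a unit pair there, and then show that restoring the deleted edges either keeps that pair a unit pair in $J$ or adds too many perfect matchings. So suppose $J$ has no unit pair; we aim to prove \eqref{eq:dichotomy}.

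First, delete edge copies of $J$ one at a time while remaining in $\cF$, until we reach a member $J'$ that is minimal in $\cF$. Each restored copy contributes at least one perfect matching that was absent before (as in the last paragraph of the proof of Lemma~\ref{lem:linearaux}). Hence $m(J')\le m(J)-k$, where $k$ is the number of deleted copies. Combined with \eqref{eq:clmlinear}, this gives $k\le B$ and $m(J')\le 4q-4+B$. The edge excess of $J$ is at most $k$ plus that of $J'$, so it suffices to bound the excess of $J'$.

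Second, show that a minimal member of $\cF$ whose count is within $B$ of $4q-4$, and whose rank is at least $Q(B)$, has a unit pair $(a,b)$. I would prove this by retracing the inductive proof of \eqref{eq:clmlinear} in \cite{CLM13} and of Lemma~\ref{lem:linearaux}. There, $J'$ is reduced by deleting one of the three removable edges at a cubic vertex (Lemma~\ref{lem:clmminimal}), or by the tight-cut contraction \eqref{eq:extension}. When the count is near-extremal, at most about $B$ of the steps along the induction can be slack. Since $q\ge Q(B)$, some step must be tight, and a tight step forces a cofactor equal to one. I expect this to be the main obstacle, because it requires a stability version of the linear bound rather than only the bound itself. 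What I would try first is to track the slack $m-(4q-4)$ through each reduction and show it never decreases.

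Third, apply Lemma~\ref{lem:unitidentity} to $(a,b)$ in $J'$, with $e\le B$. This gives two facts:
\begin{enumerate}
\item the nonhub excess of $J'$ is at most $B/2$, and both hubs have degree at least $q-1-\lfloor B/4\rfloor$;
\item every edge of the unique perfect matching $M_0$ of $J'-a-b$ lies in at least $m(J')-(B+4)^2$ perfect matchings of $J'$.
\end{enumerate}
Fact 2 says a typical edge of $M_0$ is in almost all matchings. Because $J$ has no unit pair, $m(J-a-b)\ge2$. So some restored copy $f$, not incident with $a$ or $b$, creates an alternating cycle $C$ with respect to $M_0$ in $J-a-b$.

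Finally, count the extra perfect matchings of $J$ obtained by combining $C$ with matchings of $J'$ that agree with $M_0$ on $C$. The number of edges of $C$ is at most about $(B+5)^2$, since long cycles would force the internal digraph of Lemma~\ref{lem:unitidentity} to carry too much weight. Fact 2 then shows that at least $m(J')-|C|(B+4)^2$ matchings contain all of $C\cap M_0$. Each of these yields a new matching after switching along $C$, giving more than $B$ extra matchings once $q\ge Q(B)$. This contradicts $m(J)\le 4q-4+B$. Hence $(a,b)$ remains a unit pair in $J$, contradicting the assumption that $J$ has none.

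It follows that the excess can only come from the hubs, the at most $B$ restored copies, and the nonhub excess, which together give a bound of the form $3(B+5)^2+2B$. The most delicate steps are the cycle-length estimate in the final count and the stability claim in the second step.
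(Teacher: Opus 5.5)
Your second step is a genuine gap, and the rest of the plan depends on it. You assert that the minimal spanning member $J'$ has a unit pair, but you give only a heuristic, and the assertion is far stronger than the lemma. Every graph between $J'$ and $J$ lies in $\cF$ and has at most $4q-4+B$ perfect matchings. So your second and fourth steps together would show that \emph{every} $J$ satisfying the hypotheses has a unit pair, i.e.\ that the alternative \eqref{eq:dichotomy} is never needed. Nothing in the paper proves this, and your sketch does not either. You do not explain why a ``tight'' step of a CLM-type induction forces a cofactor equal to one. Even granting that, the unit pair would live in some reduced graph, not in $J'$. Carrying a unit pair back up through the reductions is the real content of the lemma, and your plan omits it. Your last paragraph also conflates the two alternatives. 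By \eqref{eq:hubdegree} the hub $a$ alone gives $|E(J)|-3q\ge q-4-\lfloor B/4\rfloor$, so for large $q$ a graph with a unit pair never satisfies \eqref{eq:dichotomy}. No bound ``from the hubs'' of the form $3(B+5)^2+2B$ is possible.

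The paper never asserts that a unit pair exists. It reduces $J$ down to rank $Q(B)$ by selected steps: deleting a nonsolitary parallel copy, or the tight-cut contraction \eqref{eq:extension} at a cubic endpoint. The potential $\epsilon=m-4q+4-\mu$ is nonincreasing along these steps, hence stays in $[0,B]$. The paper then proves backward propagation. Each edge of the unique cofactor matching in the smaller graph $K$ lies in at least $4\operatorname{rank}(K)-4-(B+4)^2>B+4$ matchings, while one step raises $m$ by at most $B+4$; so a unit pair of $K$ yields one in the larger graph. If $J$ has no unit pair, then neither has any reduced graph. Hence $\mu=0$ throughout, there are at most $\lfloor B/2\rfloor$ deletions, and contractions preserve $E-3q$. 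Finally \eqref{eq:dimension} at rank $Q(B)$ gives $E-3q\le 3Q(B)-6+B$. This rank reduction is what produces the term $3(B+5)^2$; your plan keeps the rank fixed at $q$ and has no such source.

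Your closing count also fails numerically at the threshold. A union bound losing $|C\cap M_0|(B+4)^2$, with $|C\cap M_0|$ of order $B$ or more, exceeds $m(J')\approx 4(B+5)^2$. That part is repairable: restore copies one at a time, and bound the matchings missing an edge of $C\cap M_0$ by $\sum_{v\in C}p_vr_v\le 16|C|+8e+e^2$, using the quantities $T_j$ from Lemma~\ref{lem:unitidentity}. But this repair does not rescue the second step.
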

\begin{proof}
We describe selected reductions of a nonsimple member of $\cF$.
If $\mu(J)>0$, choose a largest solitary class $ab$ and a parallel
class $az$ with $z\ne b$. Copies in the latter class are
nonsolitary by Lemma~\ref{lem:solitary}. If both its endpoints are noncubic, delete a copy;
otherwise contract at its cubic endpoint as in~\eqref{eq:extension}.
If $\mu(J)=0$, choose any parallel class and make the same decision.

For a selected deletion $J\to K$, its matching contribution is at
least two and $\mu(K)\ge\mu(J)$. Indeed, when a solitary class
$ab$ was selected, its unique cofactor matching cannot use an
edge incident with $a$; deleting a copy in $az$ leaves that
cofactor and the class $ab$ intact. If $\mu(J)=0$, monotonicity
is automatic. For a selected contraction, use
the notation $u,v,w,C_v,C_w$ of~\eqref{eq:extension}, and let
$s_v,s_w$ count solitary copies of $K$ in the two cut parts. Then
\begin{equation}\label{eq:selected}
 \mu(K)-\mu(J)\ge s_w,\qquad
 \sum_{f\in C_w}m_K(f)+\mu(K)-\mu(J)\ge2|C_w|\ge4.
\end{equation}
Indeed, adjacent solitary edges are parallel, so
$\mu(K)\ge s_v+s_w$. If the selected cubic endpoint is $z$, all
copies of the chosen $ab$ class lie in $C_v$ and remain solitary.
If it is $a$, then $\mu(J)=1$ and $uw=ab$; solitude implies that
$C_v$ consists of one solitary copy. Thus $s_v\ge\mu(J)$ in both
cases. If $\mu(J)=0$, the assertion is immediate.
Each nonsolitary cut copy belongs to at least two matchings and
each solitary copy to one. Thus
$\sum_{f\in C_w}m_K(f)\ge2|C_w|-s_w$; combining this with
$\mu(K)-\mu(J)\ge s_w$ proves the second inequality
in~\eqref{eq:selected}.

Set $\epsilon(J)=m(J)-4q+4-\mu(J)$. At ranks at least seven this
is nonnegative by~\eqref{eq:clmmu}. It decreases by at least two
under a selected deletion, and is nonincreasing under a selected
contraction. To track the rank change explicitly, put
$I=m(J)-m(K)$ and $\Delta\mu=\mu(K)-\mu(J)\ge0$. Then
\[
 \epsilon(J)-\epsilon(K)=
 \begin{cases}I+\Delta\mu,&\text{for deletion},\\
 I-4+\Delta\mu,&\text{for contraction}.
 \end{cases}
\]
The second expression is nonnegative by~\eqref{eq:selected}.
Moreover, a step with excess decrease at most $B$ has
matching contribution (the difference $m(J)-m(K)$) at most $B$ for deletion and at most $B+4$
for contraction, since $\mu(K)\ge\mu(J)$.

We next show that a unit pair propagates backwards through any such
step whose smaller graph has rank at least $Q(B)$ and excess at most
$B$. Choose its unit pair $(a,b)$ so that the multiplicity of $ab$
is $\mu(K)$: use a largest solitary class if $\mu(K)>0$, and a
unit nonedge otherwise. Lemma~\ref{lem:unitidentity} shows that
its parameter $e$ is exactly $\epsilon(K)\le B$. Since
$m(K)\ge4\operatorname{rank}(K)-4$,
each edge in the unique matching $M$ of $K-a-b$ belongs to at least
\[
                   4\operatorname{rank}(K)-4-(B+4)^2>B+4
\]
matchings. Here the strict inequality follows by substituting
$\operatorname{rank}(K)\ge(B+5)^2$; the resulting difference is
$3B^2+31B+76>0$. Call these edges protected for this step only.

For deletion, the restored parallel copy cannot be parallel to a
protected edge. It is not an $ab$ copy either, since it is
nonsolitary. A new matching after restoration would, on replacing
the restored copy by an old parallel copy, give $M$. That would
require the restored copy to be parallel to an edge of $M$, already
excluded. The unit pair persists and its
multiplicity, together with $\mu(K)\ge\mu(J)$, forces equality of
the maximum solitary multiplicities.

For contraction, suppose $u$ is on the $a$ side, and let $z$ be the
merged vertex corresponding to $v,w$ in $K$; interchange the shores
if necessary. No protected edge lies in $C_w$, since the entire
matching increment is at most $B+4$. If $z\ne b$, the edge of $M$
at $z$ lies in $C_v$ and has a unique extension. Every matching of
$J-a-b$ contracts to $M$: the vertex $u$ must be matched inside
$\{u,v,w\}$, leaving exactly one cut edge, just as in the tight-cut
argument for~\eqref{eq:extension}. Its edge at $z$ is in $C_v$,
so $uw$ is forced in the extension. This is again a unit pair. Its
multiplicity is unchanged and equals $\mu(J)$ by monotonicity.
If $z=b$, delete $(a,v)$ in $J$. The edge $uw$ is forced, and the
remaining matching is precisely $M$. All solitary cut copies at
$z$ are parallel to $a$, so the multiplicity of $av$ is
$\mu(K)-s_w$. Equation~\eqref{eq:selected} and the new unit pair
give $\mu(J)=\mu(K)-s_w$, including the case of multiplicity zero.
This proves backward propagation.

Now suppose $J$ has no unit pair. Run selected reductions down to rank
$Q(B)$. No intermediate graph at or above that rank can have a unit
pair: if one first appeared, backward propagation through the
preceding step, whose excess decrease is at most $B$, would give
one in its predecessor. Hence all have $\mu=0$ and excess between
zero and $B$. A simple terminal graph of larger rank is impossible,
because~\eqref{eq:simplebounds} would give $q\le(B+20)/2<Q(B)$.
There are at most $\lfloor B/2\rfloor$ deletions. At terminal rank,
\eqref{eq:dimension} and $m\le4q-4+B$ give
$E\le6q-6+B$, hence $E-3q\le3Q(B)-6+B$. Contractions
preserve $E-3q$, and a deletion decreases it by one. Restoring the
deleted copies therefore adds at most $\lfloor B/2\rfloor$ to
this last bound, which is at most the right side of
\eqref{eq:dichotomy}. The process cannot stop earlier: a nonsimple
graph admits a selected step, and each step reduces its size.
\end{proof}

\begin{theorem}\label{thm:gurvits}
\textup{\cite[Theorem~2.2]{Gurvits}}
Let $C$ be a nonnegative square matrix with positive permanent,
and let $Z$ be a nonnegative matrix with every row and column sum
equal to one, supported on the positive entries of $C$. Then
\[
 \log\per C\ge\sum_{i,j}\left[(1-Z_{ij})\log(1-Z_{ij})
                   -Z_{ij}\log\frac{Z_{ij}}{C_{ij}}\right],
\]
where zero terms are interpreted by continuity, including a zero
contribution when $C_{ij}=Z_{ij}=0$.
\end{theorem}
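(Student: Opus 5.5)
Since Theorem~\ref{thm:gurvits} is quoted from~\cite{Gurvits}, this is only the plan I would follow to rederive it. It combines a variational reduction with Schrijver's permanental inequality,
\[
 \per\bigl(Z\circ(J-Z)\bigr)\ge\prod_{i,j}(1-Z_{ij})
\]
for every doubly stochastic $Z$, where $J$ is the all-ones matrix and $\circ$ is the entrywise product. Write $F_C(Z)$ for the right side of Theorem~\ref{thm:gurvits}. It is continuous on the compact polytope $\Omega_C$ of doubly stochastic matrices supported on the positive entries of $C$, and $\Omega_C\neq\varnothing$ because $\per C>0$. So it suffices to prove $\log\per C\ge F_C(Z^*)$ for one maximiser $Z^*$ of $F_C$ on $\Omega_C$.

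\emph{Reductions.} First, entries of $C$ that lie on no positive diagonal can be set to zero. This changes neither $\per C$ nor $\Omega_C$, since a doubly stochastic matrix is a convex combination of permutation matrices supported on its own support. After permuting rows and columns, $C$ becomes a direct sum of fully indecomposable blocks. Both sides of the inequality are additive over blocks: the cross-block terms vanish by the convention at $C_{ij}=Z_{ij}=0$. Hence I may assume $C$ is fully indecomposable, or $1\times1$, where the claim is an equality.

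\emph{Interior maximiser.} In the fully indecomposable case, $\Omega_C$ contains a matrix with full support on $\operatorname{supp}C$. The derivative of $-z\log z$ is $+\infty$ at $z=0^+$, while the other terms have bounded derivative near $0$. Moving toward a full-support point therefore strictly increases $F_C$, so $Z^*$ has no zero on $\operatorname{supp}C$. Entries with $Z^*_{ij}=1$ occur only when the row and column are otherwise empty, and that case falls under the previous reduction. Thus $Z^*$ lies in the relative interior of $\Omega_C$.

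\emph{Stationarity.} Differentiating the Lagrangian with row and column multipliers gives, for every $(i,j)$ in the support,
\[
 -\log(1-Z^*_{ij})-1-\log Z^*_{ij}-1+\log C_{ij}=\lambda_i+\nu_j.
\]
Equivalently, $C_{ij}=a_ib_j\,Z^*_{ij}/(1-Z^*_{ij})$ for positive scalars $a_i,b_j$. Substituting this into $F_C$ and using that row and column sums are one collapses the right side to
\[
 F_C(Z^*)=\sum_i\log a_i+\sum_j\log b_j+\sum_{i,j}\log(1-Z^*_{ij}).
\]
On the other hand, $\per C=\prod_i a_i\prod_j b_j\,\per\bigl(Z^*/(1-Z^*)\bigr)$, with the quotient taken entrywise.

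\emph{Conclusion.} Since $1/(1-z)\ge 1-z$ on $[0,1)$, we have $Z^*/(1-Z^*)\ge Z^*\circ(J-Z^*)$ entrywise. Monotonicity of the permanent together with Schrijver's inequality then gives
\[
 \per\bigl(Z^*/(1-Z^*)\bigr)\ge\prod_{i,j}(1-Z^*_{ij}).
\]
Taking logarithms yields $\log\per C\ge F_C(Z^*)\ge F_C(Z)$ for every $Z\in\Omega_C$.

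I expect the main obstacle to be justifying that the maximiser is stationary in the relative interior. This requires the boundary-derivative argument at zeros and the separate treatment of entries equal to one. Everything else is algebra, given Schrijver's inequality as a black box.
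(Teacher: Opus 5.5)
The paper gives no proof of this statement; it imports the inequality from Gurvits. Your outline is the natural route: reduce to fully indecomposable $C$, take a maximiser of $F_C$, use the Lagrange conditions, and finish with Schrijver. Two steps fail as written, however.

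\emph{The stationarity algebra is inverted.} Your displayed Lagrange equation is correct, but it gives $\log C_{ij}=\lambda_i+\nu_j+2+\log Z^*_{ij}+\log(1-Z^*_{ij})$. Hence $C_{ij}=a_ib_jZ^*_{ij}(1-Z^*_{ij})$, not $a_ib_jZ^*_{ij}/(1-Z^*_{ij})$. If $C$ really had the form you wrote, substitution would give $F_C(Z^*)=\sum_i\log a_i+\sum_j\log b_j+\sum_{i,j}(1-2Z^*_{ij})\log(1-Z^*_{ij})$. This is at least your collapsed expression, because $-2z\log(1-z)\ge0$. So your final chain would bound $\log\per C$ below by something no larger than $F_C(Z^*)$, and would prove nothing. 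With the correct form, $C=\mathrm{diag}(a)\,(Z^*\circ(J-Z^*))\,\mathrm{diag}(b)$ exactly. Your collapse is then right, Schrijver's inequality applies directly, and the comparison $1/(1-z)\ge1-z$ is unnecessary.

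\emph{The interior-maximiser claim is false.} At an entry $Z^*_{ij}=1$, the term $(1-z)\log(1-z)$ also has infinite slope: moving toward $W$ decreases it at rate $t\log(1/t)$. Your remark that such entries fall under the earlier reduction confuses the support of $Z^*$ with that of $C$. A counterexample: for $n=2$, on $\Omega_C$ one has $F_C=p\log(c_{11}c_{22})+(1-p)\log(c_{12}c_{21})$. So for a positive matrix with $c_{11}c_{22}\ne c_{12}c_{21}$, the only maximiser is a permutation matrix.

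The repair is a case split.
\begin{itemize}
\item Take $W$ with full support on $\operatorname{supp}C$. Along $Z^*+t(W-Z^*)$, $F_C$ changes by $\kappa\,t\log(1/t)+O(t)$, where $\kappa=\sum_{Z^*_{ij}=0<W_{ij}}W_{ij}-\sum_{Z^*_{ij}=1}(1-W_{ij})$.
\item Expanding $1-W_{ij}$ along row $i$, and separately along column $j$, gives $\kappa\ge0$. Equality holds only if every zero of $Z^*$ on $\operatorname{supp}C$ lies in $R\times S$, where $R$ and $S$ are the rows and columns of $Z^*$ containing a one.
\item In that equality case, $C$ vanishes on the $(n-|R|)\times|R|$ block $R^c\times S$. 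Full indecomposability then forces $|R|\in\{0,n\}$.
\end{itemize}
Consequently a maximiser is of one of two kinds.
\begin{itemize}
\item It is a permutation matrix $P_\sigma$. Then the inequality is trivial, since $F_C(P_\sigma)=\sum_i\log C_{i\sigma(i)}\le\log\per C$.
\item It has no zero on $\operatorname{supp}C$. Since $n\ge2$, it then has no entry equal to one, so it lies in the relative interior, and your corrected stationarity argument applies.
\end{itemize}
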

Here $\per C=\sum_{\pi}\prod_i C_{i,\pi(i)}$, with the sum over
permutations; for a bipartite multigraph its multiplicity matrix
has permanent equal to its number of perfect matchings.

\begin{lemma}\label{lem:entropy}
Let $G$ be a simple bipartite matching covered graph of rank $r$ and
minimum degree at least three. Put $S=|E(G)|-3r$. Then
\begin{equation}\label{eq:entropybound}
                         m(G)\ge(4/3)^{3r/(S+3)}.
\end{equation}
\end{lemma}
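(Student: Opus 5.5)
The plan is to apply Gurvits' inequality, Theorem~\ref{thm:gurvits}, to the $0/1$ biadjacency matrix $C$ of $G$. Since $G$ is simple, $\per C=m(G)$. With $C_{ij}=1$ on edges the right side is $\sum_{ij\in E}f(Z_{ij})$, where
\[
 f(z)=(1-z)\log(1-z)-z\log z .
\]
So everything reduces to exhibiting a doubly stochastic $Z$ supported on $E(G)$ with
\[
 \sum_{ij\in E}f(Z_{ij})\ge\frac{3r}{S+3}\log\frac43 .
\]
Two calculus facts about $f$ will be used. First, $f''(z)=(2z-1)/(z(1-z))$, so $f$ is concave on $[0,1/2]$ with $f(0)=0$. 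Second, $f(1/3)=\tfrac13\log\tfrac43$. The chord from $0$ to $1/3$ therefore gives $f(z)\ge z\log(4/3)$ for $0\le z\le1/3$. On $[1/3,1/2]$ we still have $f\ge0$, but $f$ becomes negative near $1$. The whole difficulty is thus to keep the entries of $Z$ small while the total mass stays $r$. In the cubic case ($S=0$), the uniform choice $Z=\tfrac13C$ gives exactly $3r\cdot f(1/3)=r\log(4/3)$, which is Schrijver-type sharp. This is the model for the general construction.

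For general $S$ I would build $Z$ as a convex combination $Z=\sum_M\lambda_M\chi_M$ of perfect matchings, which is automatically doubly stochastic and supported on $E$. Matching coverage lets me choose the matchings so that every edge receives weight. I would aim for a ``spread'' combination in which each vertex of degree $d$ sees its incident entries roughly balanced, of size about $1/d$. The excess is controlled by $\sum_{v\in A}(d(v)-3)=S$, and likewise on $B$. To get such a balanced point I would use the flow formulation: a doubly stochastic $Z$ with prescribed upper capacities $c_e$ on edges exists iff Hall-type cut conditions $\sum_{e\in\partial(U)}c_e\ge|U|$ hold. Choosing capacities $c_e=\min\{1/d(u),1/d(v)\}$-like values and verifying the cut condition via Theorem~\ref{thm:clmstructure} ($|N(U)|\ge|U|+1$) is the first thing I would try. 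The mass on edges with $Z_{ij}\le1/3$ is then bounded below by $r$ minus a contribution charged to the high-degree vertices.

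If a fully balanced $Z$ is not available, I would fall back to mixing: $Z=tW+(1-t)P$. Here $P$ is a perfect matching, and $W$ is a doubly stochastic matrix whose entries are at most $1/3$ except on a set of edges whose number is controlled by $S$. The parameter $t$ is then optimized. The target exponent $3r/(S+3)$ suggests taking $t$ of order $3/(S+3)$, scaling the cubic bound $r\log(4/3)$ by that factor. Here concavity of $f$ is used as $f(tz)\ge t f(z)$ for small arguments. The entries coming from $P$ must be shown to contribute nonnegatively, or only negligibly, which requires $1-t$ to stay at most $1/2$ on those entries after mixing.

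I expect the main obstacle to be the existence of the well-spread doubly stochastic $Z$. Minimum degree three does not guarantee a $3$-factor, so the cubic model cannot simply be embedded. The Hall/flow verification must use matching coverage together with the global excess bound $S$, and the loss from entries exceeding $1/3$ must be quantified exactly enough to yield the factor $3/(S+3)$ rather than something weaker.
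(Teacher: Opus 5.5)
There is a genuine gap. Your reduction to Theorem~\ref{thm:gurvits}, the computation of $f''$, the value $f(1/3)=\tfrac13\log\tfrac43$ and the cubic case are all correct. However, the construction of $Z$ for general $S$ is never carried out, and both routes you sketch aim at the wrong constraint: small entries cannot be the mechanism.

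Capacities $\min\{1/d(u),1/d(v)\}$ sum to less than one at every vertex that has a neighbour of larger degree. They are therefore infeasible in every nonregular connected graph. More decisively, in a biwheel of rank $r\ge5$ no doubly stochastic $Z$ supported on $E$ has all entries at most $1/3$. Each degree-three rim row would have to be $(\tfrac13,\tfrac13,\tfrac13)$, so a hub column would sum to $(r-1)/3>1$. This is unavoidable: any such $Z$ would give $m(G)\ge(4/3)^r>(r-1)^2$ through your chord bound.

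Your fallback $Z=tW+(1-t)P$ with $t\approx3/(S+3)$ violates your own requirement $1-t\le1/2$ as soon as $S\ge4$. Take a degree-three row where $W$ is spread. Its matching entry is $1-\theta$ with $\theta$ of order $t$, and it contributes $f(1-\theta)=-f(\theta)$, a loss of order $t\log(1/t)$. The linearisation $f(tz)\ge tf(z)$ recovers only order $t$ from the other two entries. So your entrywise accounting becomes negative for large $S$.

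The missing idea is to bound the entries from \emph{below} and let the compensation happen inside each row. Suppose a probability vector has at least three coordinates, all at least $\eta$. Then
$\sum_i f(p_i)\ge 2f(\eta)-f(2\eta)\ge 3\eta\log\tfrac43$.
\begin{itemize}
\item Moving mass from a smaller coordinate $b$ to a larger one $a$ does not increase the sum, because $f'(a)-f'(b)=\log\frac{b(1-b)}{a(1-a)}\le0$ when $a\ge b$ and $a+b\le1$.
\item This reduces to the vector $(1-2\eta,\eta,\eta)$, where $f(1-2\eta)=-f(2\eta)$.
\item The function $g(t)=2f(t)-f(2t)$ is concave with $g(0)=0$ and $g(1/3)=3f(1/3)=\log\tfrac43$; the chord on $[0,1/3]$ finishes.
\end{itemize}
So one large entry per row is harmless, and what you need is a doubly stochastic $Z$ on $E$ with every entry at least $1/L$, where $L=S+3$.

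Every degree is at most $L$, so you can put $Z_e=(1+w_e)/L$, where $w$ is a nonnegative integer weighting with vertex sums $L-d(v)$. Such a $w$ exists by Theorem~\ref{thm:hall}, applied to the graph in which each $v$ is replaced by $L-d(v)$ copies. For nonempty $X\subsetneq A$, the needed inequality $\sum_{x\in X}(L-d(x))\le\sum_{u\in N(X)}(L-d(u))$ follows from two facts:
\begin{itemize}
\item $k=|N(X)|-|X|\ge1$, by Theorem~\ref{thm:clmstructure};
\item $\sum_{N(X)}d-\sum_X d\le 3k+S\le Lk$.
\end{itemize}
For $X=A$ the two totals agree. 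Summing the row bound over the $r$ rows then gives exactly $\frac{3r}{S+3}\log\tfrac43$.
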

\begin{proof}
Set $L=S+3$ and $\eta=1/L$. Every degree is at most $L$. There
are nonnegative edge weights $y_e$ with vertex sums $L-d(v)$.
Indeed, for $\varnothing\ne X\subsetneq A$, put $Y=N(X)$ and
$t=|Y|-|X|\ge1$ by Theorem~\ref{thm:clmstructure}. Then
\[
 \sum_{y\in Y}d(y)-\sum_{x\in X}d(x)\le3t+S\le Lt,
\]
which is the required neighbourhood inequality for these demands.
The empty and full subsets satisfy the demand inequalities as well.
Here the degree excess on each shore is $S$, which proves both
$d(v)\le L$ and the first displayed inequality. Rewriting it as
\[
 \sum_{x\in X}(L-d(x))\le\sum_{y\in N(X)}(L-d(y))
\]
makes the demand condition explicit; the total demand on each
shore is $rL-|E(G)|$. To obtain $y_e$, replace each vertex $v$
by $L-d(v)$ copies, joining all copies of adjacent vertices.
The displayed inequality implies Hall's condition for the copy
graph: a set of copies with original vertices $X$ has size at
most the total demand of $X$ and sees all copies of $N(X)$.
Theorem~\ref{thm:hall} gives a perfect matching; let $y_e$ count
its edges over each original edge $e$.
Thus $x_e=(1+y_e)/L$ is a fractional perfect matching positive by
at least $\eta$ on every edge.

Theorem~\ref{thm:gurvits}, applied to the zero-one biadjacency matrix
and the matrix with entries $x_e$ (zero off the edges), gives
\[
 \log m(G)\ge\sum_e f(x_e),\qquad
 f(t)=-t\log t+(1-t)\log(1-t).
\]
For a probability vector with at least three positive coordinates,
all at least $\eta$, we have
\[
                    \sum_i f(p_i)\ge2f(\eta)-f(2\eta)
                                     \ge3\eta\log(4/3).
\]
Here a probability vector has nonnegative entries summing to one.
For the first inequality, transferring mass from a smaller entry
$b$ to a larger entry $a$ cannot increase the sum, because, for
$a\ge b>0$ with $a+b\le1$,
$f'(a)-f'(b)=\log[b(1-b)/(a(1-a))]\le0$.
Retain the second and third largest entries at least $\eta$,
transfer all smaller entries to the largest, and then reduce the
two retained entries to $\eta$ by the same transfers. The result
is $(1-2\eta,\eta,\eta,0,\ldots)$; continuity at zero and
$f(1-t)=-f(t)$ give $2f(\eta)-f(2\eta)$. For the second
inequality, $g(t)=2f(t)-f(2t)$ has
$g(0)=0$, $g(1/3)=\log(4/3)$, and
$g''(t)=-2/[(1-t)(1-2t)]<0$. Use the chord on $[0,1/3]$.
Each row has at least three positive entries because $G$ is simple
of minimum degree three. Summing its bound over the $r$ rows and
exponentiating proves~\eqref{eq:entropybound}.
\end{proof}

\section{Opposite deletions and common forests}\label{sec:forests}

We first dispose of opposite deletions with small edge excess. This
will later supply a lower bound on every column cofactor in an exact
forest expansion.

\begin{lemma}\label{lem:smallcore}
Let $G[A,B]$ be a simple brace of rank $r\ge3$, and let $a\in A$,
$b\in B$. Put $N=r-1$, $H=G-a-b$, $h=m(H)$ and
$\xi=|E(H)|-|V(H)|$. If $\xi=0$, then $G$ is a biwheel or an
extended biwheel. If $\xi\ge1$, then
\begin{equation}\label{eq:smallcore}
             m(G)\ge\frac76(N-4\xi)_+^2-\frac{\xi N}{2}.
\end{equation}
In particular, $1\le\xi\le N/59$ implies $m(G)>N^2$. The same
conclusion holds if $3\le h\le N/59+2$.
\end{lemma}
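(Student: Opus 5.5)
The case $\xi=0$ is structural. By Theorem~\ref{thm:clmstructure} the graph $H=G-a-b$ is matching covered, so it is connected with minimum degree at least two. A connected graph with $|E(H)|=|V(H)|$ and minimum degree two is a single cycle of length $2N$. Every cycle vertex has degree at least three in $G$, and its only possible extra neighbours are $a$ and $b$. Hence every $B$-vertex of the cycle is adjacent to $a$, and every $A$-vertex is adjacent to $b$. So $G$ is a biwheel, or an extended biwheel when $ab\in E(G)$.

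For $\xi\ge1$ I would use the degree-two structure of $H$. We have
\[
\sum_{v\in V(H)}(d_H(v)-2)=2\xi,
\]
so at most $2\xi$ vertices of $H$ have degree at least three, and at least $2N-2\xi$ have degree two. A degree-two vertex of $H$ has degree at least three in $G$. Therefore every degree-two $A$-vertex of $H$ is adjacent to $b$, and every degree-two $B$-vertex is adjacent to $a$. The degree-two vertices lie on at most $3\xi$ maximal threads (ear interiors) between branch vertices; this thread count uses $\xi\ge1$. Let $X_2\subseteq A$ and $Y_2\subseteq B$ be the degree-two vertices, with $|X_2|,|Y_2|\ge N-2\xi$ up to an $O(\xi)$ imbalance.

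Partition the matchings of $G$ by the partners of $a$ and $b$. Every $y\in Y_2$ and $x\in X_2$ gives $m(G)\ge\sum_{x,y}m(H-x-y)$, and each summand is at least one by Theorem~\ref{thm:clmstructure} applied to $H$. This already gives about $(N-2\xi)^2$. To reach the factor $7/6$, I need a positive proportion of the pairs $(x,y)$ to have $m(H-x-y)\ge2$. This is the main obstacle. My plan is to use the thread decomposition. Deleting one vertex from each of two threads forces the matchings on the four resulting subpaths. The remaining graph is then the retract of $H$, of rank at most $2\xi$, with some branch vertices forced. It retains two matchings whenever some cycle of the core avoids the forced part. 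Since $\xi\ge1$, the core is not a single cycle. I would show that, for a fixed $x$, the positions $y$ along threads split into the same forcing class in alternating fashion. One parity class of positions on the complementary threads gives cofactor at least two. A convexity count over thread lengths (total length at least $2N-4\xi$) should then give at least $\frac16(N-4\xi)^2$ extra matchings. The boundary terms near branch vertices, and the imbalance between $|X_2|$ and $|Y_2|$, cost at most $\xi N/2$. If this accounting is too lossy, I would fall back to choosing a single favourable pair of threads and using only pairs drawn from them.

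The remaining assertions are arithmetic. If $1\le\xi\le N/59$, then the right side of~\eqref{eq:smallcore} is at least
\[
\Bigl[\tfrac76(55/59)^2-\tfrac1{118}\Bigr]N^2>N^2,
\]
because the bracket is about $1.005$. Monotonicity in $\xi$ covers smaller values of $\xi$. For the last claim, note that $h\ge3$ excludes the cycle case, where $h=2$, so $\xi\ge1$. The dimension bound~\eqref{eq:dimension}, applied to $H$ of rank $N$, gives $h\ge|E(H)|-2N+2=\xi+2$. Hence $\xi\le h-2\le N/59$, and the previous case applies.
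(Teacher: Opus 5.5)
\textbf{There is a genuine gap: the main inequality~\eqref{eq:smallcore} for $\xi\ge1$ is not proved, and the mechanism you sketch for it is wrong.} Your treatment of $\xi=0$ matches the paper. So does the arithmetic giving $m(G)>N^2$ for $1\le\xi\le N/59$, and the reduction of the last claim through $h\ge\xi+2$.

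When $x$ and $y$ lie on two \emph{different} threads, $m(H-x-y)$ does not depend on their positions. Deleting an internal vertex of a thread only determines, by its colour class, which ends the thread must absorb; everything else on that thread is then forced. So there is no parity alternation along complementary threads, and these cofactors may all equal $1$. For example, when $\xi=1$, $H$ is a theta graph and every cross-thread cofactor is exactly $1$.

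The gain comes entirely from pairs on the \emph{same} thread. The perfect matchings of $H$ split into two classes by their restriction to a given thread, of sizes $z$ and $h-z$. For $x,y$ on that thread, $m(H-x-y)$ is the size of one class or the other, according to the order of $x$ and $y$ along it. Thus an odd thread with $k$ internal vertices in each colour class contributes $h\binom{k}{2}+k(h-z)$, roughly $hk^2/2$, rather than $k^2$. The paper makes this exact by shortening the threads to a core $H_0$ with the same $h$ matchings. It then recovers $G$ by subdividing edges $u_iv_i$ of $H_0$ by $2k_i$ vertices, which gives the expansion~\eqref{eq:refinement} with cross coefficients $c_{ij}\ge2$.

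Even with the correct source identified, aiming for cofactors at least two on some proportion of pairs cannot give a constant independent of $\xi$. Same-thread pairs number about $\sum_i k_i^2$. With up to $3\xi$ threads, Cauchy--Schwarz only guarantees $\sum_i k_i^2\ge K^2/(3\xi)$, a fraction of order $1/\xi$ of all pairs. What compensates is that the average excess of a same-thread pair is $(h-2)/2$. So the dimension bound $h\ge\xi+2$ must be used inside the main estimate, not only in the final sentence:
\[
K^2+\frac{h-2}{2}\Bigl(\sum_ik_i^2-K\Bigr)\ge K^2+\frac{\xi}{2}\Bigl(\frac{K^2}{3\xi}-K\Bigr)=\frac76K^2-\frac{\xi K}{2}.
\]
Then $(N-4\xi)_+\le K\le N$ gives~\eqref{eq:smallcore}. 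Without the same-thread identity and this cancellation of $\xi$, your argument establishes only $m(G)\ge(N-2\xi)^2$.
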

\begin{proof}
The graph $H$ is connected and matching covered. Such a graph of
order at least four has minimum degree at least two and has no
cutvertex. A leaf would force its incident edge and make any
other edge at its neighbour inadmissible. If $v$ were a cutvertex,
the existence of a perfect matching would force exactly one
component of $H-v$ to have odd order; every other component would
have even order. No edge from $v$ to an even component could
belong to a perfect matching, contrary to matching coverage.
Thus $\xi\ge0$, and $h\ge\xi+2$ by~\eqref{eq:dimension}. If
$\xi=0$, $H$ is a cycle. Every vertex of $H$ has its forced edge
to the appropriate hub in $G$, leaving only the hub edge optional.
This is the first assertion (including the extended biwheel $K_{3,3}$
at rank three).

Suppose $\xi\ge1$. Let $W$ be the vertices of $H$ of degree at
least three. Then $|W|\le2\xi$, and the edges of $H$ partition
into at most $|W|+\xi\le3\xi$ maximal paths with internal degree
two and endpoints in $W$. No path has equal endpoints, since $H$
has no cutvertex. Shorten each odd path of length at least three
to length three, each even path to length two, and leave direct
edges unchanged. The resulting graph $H_0$ is simple, matching
covered, and has the same $h$ perfect matchings. Odd subdivision
has a unique matching extension in each of its two endpoint states.
Thus shortening a path by an even number of internal vertices
preserves its matching choices. Paths that are not direct edges
retain internal vertices, so distinct paths do not become parallel
edges in $H_0$.
Also $|V(H_0)|\le |W|+2(3\xi)\le8\xi$, since each
shortened path has at most two internal vertices.

Retain $a,b$ and their edges to retained vertices, obtaining $G_0$.
We do not require $G_0$ to be a brace. To recover $G$, subdivide
distinct edges $u_i v_i$ of $H_0$ by $2k_i$ vertices, where
$k_i\ge1$, and add the forced hub edges at these vertices. With
$K=\sum_{i=1}^{\ell}k_i$,
\begin{equation}\label{eq:coreparameters}
               \ell\le3\xi,\qquad (N-4\xi)_+\le K\le N.
\end{equation}
Here $2K=2N-|V(H_0)|$; this gives the lower bound for $K$,
and the number of shortened paths gives the bound for $\ell$.
The case $K=\ell=0$ is permitted.

Write $u_i$ on the $a$ side and put
\begin{align*}
 z_i&=m(H_0-u_i-v_i),\\
 \Delta_i&=m(G_0-a-v_i)+m(G_0-u_i-b)+h-z_i,\\
 c_{ij}&=m(H_0-u_j-v_i)+m(H_0-u_i-v_j).
\end{align*}
Partition matchings by the new vertices used by the hubs. This gives
\begin{equation}\label{eq:refinement}
 m(G)=m(G_0)+\sum_i k_i\Delta_i
                +h\sum_i\binom{k_i}{2}+\sum_{i<j}c_{ij}k_i k_j.
\end{equation}
Here is the count for the only nonimmediate term. On a subdivided
path $u_i-y_1-x_1-\cdots-y_{k_i}-x_{k_i}-v_i$, if the hubs use
$y_j,x_t$, the remaining count is $h-z_i$ for $j\le t$ and $z_i$
for $j>t$. Their sum is $h\binom{k_i}{2}+k_i(h-z_i)$. If only
one hub uses this path the two linear cofactor terms result; if
they use distinct paths the two cross cofactors result. The same
partition applies when selected edges share endpoints.

Since $H_0$ is matching covered, every opposite deletion has a
perfect matching by Theorem~\ref{thm:clmstructure}. Moreover, $z_i$ counts
exactly the matchings of $H_0$ containing $u_iv_i$. That edge
cannot belong to every perfect matching, because its endpoint
has another incident edge and that other edge is admissible.
Hence $c_{ij}\ge2$ and $\Delta_i\ge h-z_i\ge1$.
In~\eqref{eq:refinement}, discard $m(G_0)\ge0$, replace
$\Delta_i$ by $1$ and $c_{ij}$ by $2$, and use
$2\sum_{i<j}k_i k_j=K^2-\sum_i k_i^2$. This yields
\[
 m(G)\ge K^2+\frac{h-2}{2}\Big(\sum_i k_i^2-K\Big)
       \ge\frac76K^2-\frac{\xi K}{2}.
\]
For the second inequality use $h-2\ge\xi$, the nonnegativity of
$\sum k_i(k_i-1)$, Cauchy--Schwarz, and $\ell\le3\xi$.
More explicitly, for $K>0$ the remaining lower bound is
\[
 K^2+\frac{\xi}{2}\Big(\sum_i k_i^2-K\Big)
 \ge K^2+\frac{\xi}{2}\Big(\frac{K^2}{3\xi}-K\Big).
\]
It is immediate if $K=0$. Use~\eqref{eq:coreparameters} separately
on the positive and negative terms to prove~\eqref{eq:smallcore}.
When $\xi\le N/59$, its right side is at least
\[
       \frac76N^2-\frac{59}{6}N\xi+\frac{56}{3}\xi^2
                      \ge N^2+\frac{56}{3}\xi^2>N^2.
\]
Finally $\xi\le h-2$, and $h\ge3$ excludes the cycle case.
\end{proof}

Fix a vertex $a\in A$ of degree $d$ in a simple brace $G[A,B]$ of
rank $r$ and size $s$. Let $C$ be its set of cubic neighbours,
$c=|C|$, and $t=d-c$. Throughout the forest arguments assume
$c\le r-2$. On $A\setminus\{a\}$ replace each vertex $b\in C$
by an edge $e_b$ joining its two other neighbours. The resulting
graph $F$, including isolated vertices, is a forest. Indeed, a
cycle with $j\le c\le r-2$ edges would give $j$ vertices of $C$
with only $j+1$ neighbours in $G$, contrary to~\eqref{eq:bracehall}.
Write
\begin{equation}\label{eq:forestparameters}
 N=r-1,\quad p=N-c,\quad
 \zeta=\sum_{x\in A\setminus\{a\}}(d_G(x)-3)=s-3r-d+3.
\end{equation}

For $b\in C$, first bicontract the $c-1$ degree-two vertices
represented by $F-e_b$ in $G-a-b$. A component $T$ becomes an
$A$ vertex of degree
\begin{equation}\label{eq:componentdegree}
       2+\sum_{x\in T}(d_G(x)-2)-|T\cap V(e_b)|.
\end{equation}
Indeed, its original degree sum is reduced by $2(|T|-1)$ for
the contracted tree edges and by one for each endpoint of $e_b$
in $T$. This gives the displayed formula, with all parallel
copies retained.
The two endpoints of $e_b$ lie in different components. Thus a
remaining degree-two $A$ vertex is a singleton that was a cubic
leaf of $F$: every summand $d_G(x)-2$ is at least one and at
most one endpoint of $e_b$ is counted in the subtracted term.
There are at most two such singletons.
Every remaining $B$ vertex either was not adjacent to $a$, or
had degree at least four before losing its edge to $a$. Hence
it has degree at least three. The final $j_b\le2$ bicontractions
identify such vertices and create degree at least four, so no
new degree-two vertex appears. Denote the resulting retract by
$J_b$ and its rank by $q_b$. We have
\begin{equation}\label{eq:exactretract}
 q_b=r-c-j_b,\qquad
 |E(J_b)|=3r-2c-2j_b-3+\zeta,
 \qquad |E(J_b)|-3q_b=c+j_b-3+\zeta.
\end{equation}
These identities subtract $c-1+j_b$ from the initial rank $r-1$
and twice that number from the initial size $s-d-2$; then use
$s=3r+d-3+\zeta$ from~\eqref{eq:forestparameters}.

Let $I$ and $L$ be, respectively, the sets of isolated vertices and
leaves of $F$ that are cubic in $G$, using the same letters for
their cardinalities. For $y\in B\setminus C$, let $i_y,l_y$
count its neighbours in these sets. The final contractions for
$J_b$ are represented by a forest $P_b$ on $B\setminus C$ with
at most two edges, one for each contracted leaf. A two-edge cycle
would force both corresponding $B$ vertices to be matched to
its two degree-two centres, contradicting matching coverage.
Let $N_b$ be the number of vertices in $I$ with at least two
neighbours in one component of $P_b$. By~\eqref{eq:componentdegree},
apart from the at most two components containing endpoints of $e_b$,
the cubic $A$ vertices are precisely the original cubic isolates:
a component of order at least two containing neither endpoint
has degree at least four. A surviving
isolate acquires parallel edges only if two of its original
neighbours are identified in a component of $P_b$. Therefore
\begin{equation}\label{eq:receiverpotential}
                         b_A(J_b)\le N_b+2.
\end{equation}

\begin{lemma}\label{lem:receivers}
With this notation,
\begin{equation}\label{eq:receiverbounds}
 \sum_{b\in C}N_b\le\frac12\sum_y l_yi_y
                 \le\frac18\sum_{y\in B}d_G(y)^2,
 \qquad \sum_{b\in C}N_b\le IL.
\end{equation}
For any real $M\ge3$, call a vertex of $B$ high if its degree is
at least $M$, and let $h_M$ count these vertices. At most
$(s-3r+3h_M)/2$ members $b\in C$ have a component of $P_b$
containing two high vertices. Every other member satisfies
$N_b\le2M$, and $h_M\le s/M$.
\end{lemma}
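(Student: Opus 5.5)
The plan is a double count over triples $(b,z,y)$, where $b\in C$, $z\in I$, and $y\in B\setminus C$. The first step is to understand what $N_b>0$ requires. The forest $P_b$ has at most two edges, one for each contracted leaf. Each edge arises from a cubic leaf $x\in L$ of $F$ that becomes a degree-two singleton in $G-a-b$. Such a leaf is an endpoint of $e_b$, and its two remaining $G$-neighbours in $B\setminus C$ are the ends of the corresponding edge of $P_b$. So a component of $P_b$ is a single edge $yy'$ or a path $yy'y''$ on at most three vertices. An isolate $z\in I$ counted in $N_b$ has two neighbours inside such a component. Since $z$ is cubic and isolated in $F$, none of its neighbours lies in $C$, so all three neighbours of $z$ are in $B\setminus C$.

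Now charge each counted pair $(b,z)$ to an ordered pair (leaf $x$, vertex $y$) with $x\in L$ adjacent to $y$ and $z\in I$ adjacent to $y$. Take $y$ to be a common neighbour of $z$ and the relevant leaf $x$; one exists because the two neighbours of $z$ in the component are both endpoints of edges of $P_b$ coming from leaves. The key observation is that a leaf $x$ of $F$ is an endpoint of $e_b$ for exactly one $b\in C$, namely the unique tree edge at $x$. Hence the pair $(x,z)$ determines $b$.

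This gives $\sum_b N_b\le\#\{(x,z)\}\le IL$ directly, since $x$ determines $b$ and each pair $(b,z)$ is counted at least once. For the finer bound, count the triples $(x,y,z)$ with $x\in L$, $z\in I$, and $y$ adjacent to both; there are $\sum_y l_yi_y$ of them. If $z$ has two neighbours in a component of $P_b$, I would show that the pair $(b,z)$ receives at least two such triples. The two neighbours of $z$ are distinct vertices of the component, and each is adjacent to a leaf generating the component. This yields the factor $\tfrac12$. The inequality $l_yi_y\le(l_y+i_y)^2/4\le d_G(y)^2/4$ then gives the bound $\tfrac18\sum d^2$.

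For the high-vertex part, $h_M\le s/M$ is immediate from counting degrees. If a component of $P_b$ contains two high vertices, then some leaf $x$ at an end of $e_b$ has two high neighbours, or the two leaves of $e_b$ jointly produce them. Charge $b$ to the excess edges of these high vertices, using that the $b$ are distinct because each leaf determines $b$. The number of leaves with both non-$a$ neighbours high is at most the number of edges at high vertices divided by two. Its excess over three per high vertex, summed, is at most $s-3r+3h_M$ after accounting for the degrees of the other vertices, which are at least three, giving $(s-3r+3h_M)/2$. This accounting is the step I expect to be most delicate, and I would do it by summing $d(y)$ over high $y$ and bounding $\sum_{\text{high}}d\le s-3(r-h_M)$.

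If no component of $P_b$ contains two high vertices, each component has at most one high vertex. The isolates counted in $N_b$ need two neighbours in one component, so at least one of those neighbours has degree less than $M$. There are at most four low vertices in the components of $P_b$, and each has fewer than $M$ neighbours. So an isolate with two neighbours in a component must be adjacent to one of the at most two low vertices of that component, giving $N_b\le2M$ after a careful count over the at most two components.
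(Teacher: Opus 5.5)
Your proof is correct. For \eqref{eq:receiverbounds} it takes a different and somewhat cleaner route than the paper; the high-vertex count is the paper's argument; one step needs tightening.

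\emph{The main inequality.} The paper works one component of $P_b$ at a time. It picks a representative of maximal $i_y$ and bounds the counted isolates by $\sum i_y-\max i_y$, since each has an edge to a nonrepresentative. It then checks separately, for a single edge and for a two-edge path, that this is at most half the endpoint sums of the generating edges, and sums over leaves. You instead send each counted pair $(b,z)$ to two distinct triples $(x,y,z)$. Here $y$ runs over two neighbours of $z$ in the component and $x$ is a leaf generating an edge at $y$. Since $x$ determines $b$, a triple determines its pair, so different pairs get disjoint triples. The factor $\frac12$ then appears with no case analysis. Your map $(b,z)\mapsto(x,z)$ is equivalent to the paper's $IL$ bound.

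\emph{The high-vertex count.} The wording (``excess edges'', ``excess over three per high vertex'') is muddled, and ``non-$a$ neighbours'' of a leaf should read ``non-$b$''. State it directly: each such $b$ receives two distinct edges $xy$, with $x$ a generating leaf of $b$ and $y$ high. Each such edge determines $b$. So there are at most $\frac12\sum_{y\text{ high}}d_G(y)\le\frac12(s-3r+3h_M)$ such members.

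\emph{The bound $N_b\le2M$.} This is where the paper's representative device earns its keep, and your write-up needs it. As written, you count ``the at most two low vertices of that component'' in each component. With two single-edge components both of whose endpoints are low, that gives four low vertices and only $N_b<4M$. The fix is the paper's: in each component take the high vertex, if there is one, as representative. Every counted isolate then has a neighbour among the nonrepresentatives. There are exactly $|E(P_b)|\le2$ of these, all of degree less than $M$, so $N_b<2M$.
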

\begin{proof}
In a component of $P_b$ choose a vertex with largest $i_y$ as its
representative. Each counted isolate uses an edge to a
nonrepresentative, because it has at least two distinct neighbours
in the component. Select one such edge per counted isolate;
different isolates select different edges. Thus the number counted
in that component is at most $\sum i_y-\max i_y$. For a single
edge this is at most half the sum of the two endpoint values $i_y$.
For a two-edge path
$x$--$y$--$z$ it is at most $(i_x+2i_y+i_z)/2$, since
$i_x+i_z\le2\max\{i_x,i_y,i_z\}$. Each cubic leaf generates a
contraction only for its unique incident edge of $F$. Thus every
generating edge is used for only one $b$, and its endpoint
contribution is bounded by half the sum of their $i_y$ values.
The total number of such incidences at $y$ is $l_y$. Summing proves
the first bound. The sets $I,L$ are disjoint, so
$i_yl_y\le d_G(y)^2/4$. Also $N_b\le I$, and at most $L$ different
members can have a nonzero $N_b$, which proves the second bound.

A component with two high vertices consumes at least two high
endpoint incidences among its generating edges. These incidences
are not reused for another $b$. Their total is at most
$\sum_{y\text{ high}}d_G(y)\le s-3r+3h_M$. Otherwise choose the
single high vertex, if any, as representative. There are at most
two nonrepresentatives, each of degree less than $M$. This proves
the remaining assertions.
\end{proof}

\begin{lemma}\label{lem:aggregate}
Put
\[
 \cB=3rd-d^2+(s-3r-2d)c,\qquad K_0=(s-4r-d+2c)_+.
\]
Then
\begin{align}
 m(G)&\ge\cB-\sum_{b\in C}N_b-12d-4s,\label{eq:aggregate}\\
 m(G)&\ge\cB-\frac r8(s-3r)-12d-\frac{35}{8}s.
                                                        \label{eq:squares}
\end{align}
If $s\le4r$, then for any $M\ge3$,
\begin{equation}\label{eq:tunable}
 m(G)\ge\cB-\frac12(s-3r)K_0-2Mc-\frac{6rK_0}{M}-28r.
\end{equation}
If $3r/4\le d\le r-1$ and $s\le5r$, then
\begin{equation}\label{eq:highforest}
 m(G)\ge d(5r/2-3d/2)+(d-c)r/3-35r.
\end{equation}
\end{lemma}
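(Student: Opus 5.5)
The plan is to partition the perfect matchings at $a$: $m(G)=\sum_{b\in N(a)}m(G-a-b)$. For each cubic neighbour $b\in C$ the cofactor equals $m(J_b)$, and for each of the $t=d-c$ noncubic neighbours we use the cofactor bounds of Lemma~\ref{lem:cofactorbasic}.

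For $b\in C$, apply Lemma~\ref{lem:potential} to $J_b$ with $X$ the $A$ side. By~\eqref{eq:exactretract} and~\eqref{eq:receiverpotential},
\[
 m(J_b)\ge|E(J_b)|-N_b-16=3r-2c-2j_b-3+\zeta-N_b-16.
\]
Using $j_b\le2$ and $\zeta=s-3r-d+3$, each cubic cofactor is at least $s-2c-d-N_b-20$.

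For a noncubic neighbour $b$, use~\eqref{eq:edgecofactor} or~\eqref{eq:dimcofactor}. Summing over noncubic neighbours, $\sum d(b)$ is controlled as in the proof of~\eqref{eq:dimensionsum}: the $r-d$ vertices outside $N(a)$ and the $c$ cubic neighbours have degree at least three, so
\[
 \sum_{b\notin C}d(b)\le s-3(r-d)-3c.
\]
The next step is to add the two contributions and compare with $\cB=3rd-d^2+(s-3r-2d)c$. The cubic part contributes roughly $c(s-2c-d)$. The noncubic part contributes roughly $t(s-d-2r)-\sum d(b)$, which with $t=d-c$ I expect to rearrange into $3rd-d^2+(s-3r-2d)c$ up to errors linear in $d$ and $s$. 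The algebra needs care: a $-2c^2$ term must cancel against the $c$-dependence of $t(\cdot)$, so the choice of which cofactor bound to use for noncubic $b$ must be made so that the quadratic terms in $c$ match. If \eqref{eq:dimcofactor} does not give the right coefficient, I would use the retract bound~\eqref{eq:edgeq} on $G-a-b$ after at most $d+k-2$ bicontractions. Absorbing all constants into $-12d-4s$ should then give~\eqref{eq:aggregate}.

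The three corollaries follow from this.

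\textbf{Inequality~\eqref{eq:squares}.} Insert $\sum N_b\le\frac18\sum d(y)^2$ from Lemma~\ref{lem:receivers}. Then bound $\sum d(y)^2\le(\max d)\,s$ by writing $d(y)^2=9+(d(y)-3)(d(y)+3)$ and using $\sum(d(y)-3)=s-3r$ with $d(y)\le r$. This gives $\frac r8(s-3r)$ plus a multiple of $s$.

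\textbf{Inequality~\eqref{eq:tunable}.} Split $C$ per Lemma~\ref{lem:receivers}. Members without two high vertices in one component contribute $N_b\le2M$, giving the term $2Mc$. The at most $(s-3r+3h_M)/2$ exceptional members each contribute at most $N_b\le I$; with $h_M\le s/M\le4r/M$ this gives the $K_0$ and $6rK_0/M$ terms. It is not yet clear how $K_0$ arises. I would expect it from bounding $I$ and the high-degree count by excess counting when $s\le4r$, with the $(\cdot)_+$ handled by taking trivial bounds otherwise.

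\textbf{Inequality~\eqref{eq:highforest}.} For $d\ge3r/4$, use $\sum N_b\le IL$ and bound $I+L\le$ the number of vertices of $A\setminus\{a\}$ cubic in $G$. This yields a quadratic in $c$ to be minimized.

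The main obstacle is the exact bookkeeping in the first step: getting the quadratic terms in $d,c$ of the summed cofactor bounds to match $\cB$ exactly, with only linear errors. After that, the corollaries are substitutions of the receiver bounds from Lemma~\ref{lem:receivers}.
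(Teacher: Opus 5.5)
\textbf{The gap is in the step you flag as the main obstacle, and none of the tools you name for it can close it.} Your cubic-cofactor bound $s-d-2c-20-N_b$ and your derivation of \eqref{eq:squares} from \eqref{eq:aggregate} are fine.

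Sum your cubic bound over $C$ and \eqref{eq:edgecofactor} over the $d-c$ noncubic neighbours. Up to terms linear in $d$ and $s$ this gives
\[
 c(s-d-2c)+(d-c)(s-r-2d)=\cB+(d-c)(s-4r-d+2c).
\]
Using \eqref{eq:dimcofactor} instead gives $\cB+(d-c)(s-5r+2c)$. Your fallback, \eqref{eq:edgeq} on the retract after at most $d+k-2$ bicontractions, is exactly how \eqref{eq:edgecofactor} is proved. Even the sharper count of $c+k-1$ bicontractions only changes the correction to $(d-c)(s-4r+c)$. All these corrections are negative of order $r^2$ at small excess; at $c=0$, $d=r/2$, $s=3r+d$ the first one is $-r^2/2$. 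So the problem is not the $c^2$ bookkeeping. It is the type of bound: $E-q$ and the dimension bound are weak when $s-3r$ is small.

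What you need is a noncubic cofactor bound of order $3r-d-2c-O(k)$. The paper gets $m(G-a-b)\ge3r-d-2c-4k-12$ as follows:
\begin{itemize}
\item $G-a-b$ has exactly $c$ degree-two vertices on the $B$ side and at most $k-1$ on the $A$ side, so its retract has at least $s-d-2c-3k+3$ edges.
\item Lemma~\ref{lem:fibercharge} bounds its cubic parallel $A$ vertices by $2(k-1)+s-k-3(r-1)$.
\item Lemma~\ref{lem:potential} then gives the bound.
\end{itemize}
Summing with $\sum_{b\in N(a)\setminus C}d(b)\le s-3c$ then yields \eqref{eq:aggregate} with no quadratic loss. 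A shortcut also works. The same count of at most $c+k-1$ bicontractions, together with $m\ge4q-8$ from \eqref{eq:clmlinear}, gives $m(G-a-b)\ge4(r-c-k)-8$. Then $c(s-d-2c)+(d-c)(4r-4c)=\cB+(d-c)(r+d-2c)\ge\cB$, since $c\le d\le r$.

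\textbf{The two later inequalities also lack their key inputs.} For \eqref{eq:tunable}, $K_0$ comes from a second, rank-based estimate for cubic cofactors that you never use:
\[
m(G-a-b)=m(J_b)\ge4q_b-8\ge4r-4c-16 .
\]
On the at most $(s-3r+3h_M)/2\le(s-3r)/2+6r/M$ exceptional members, use this instead of the potential bound. Its shortfall from $s-d-2c-22$ is $s-4r-d+2c-6\le K_0$. Bounding $N_b\le I$ there does not give \eqref{eq:tunable}: $K_0=0$ whenever $s\le4r+d-2c$, while $I$ can be of order $r$.

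For \eqref{eq:highforest}, the constraint $I+L\le N$ only yields $\sum_{b\in C}N_b\le N^2/4$, which is too weak. At $c=d=xr$ and $s=3r+d$ it leaves $(3x-2x^2-1/4)r^2$, which is below the target $x(5/2-3x/2)r^2$ for every $x$. The missing ingredient is the tree degree-sum bound $2L\le N+2p-3I+\zeta$ with $p=N-c$. It forces $IL$ to be small as $c$ approaches $N$. One then optimizes over $I/r\le1-c/r$, separately for $c/r\ge1/3$ (using this bound) and $c/r\le1/3$ (where $IL\le r^2/4$ suffices).
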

\begin{proof}
For $b\in C$, Lemma~\ref{lem:potential},
\eqref{eq:receiverpotential} and~\eqref{eq:clmlinear} give the
convenient weakened estimate
\begin{equation}\label{eq:cubiccofactor}
 m(G-a-b)\ge\max\{s-d-2c-22-N_b,\ 4r-4c-16\}.
\end{equation}
For the first alternative, use
$|E(J_b)|=s-d-2c-2j_b$, subtract at most $N_b+2+14$, and
then use $j_b\le2$. For the second, use $q_b\ge r-c-2$ in
$m(J_b)\ge4q_b-8$. Both alternatives estimate the same cofactor,
so their maximum is also a lower bound.
For a noncubic neighbour $b$ of degree $k$, there are initially
$c$ degree-two $B$ vertices and at most $k-1$ degree-two $A$
vertices in $G-a-b$. Its retract has at least
$s-d-2c-3k+3$ edges, since at most $c+k-1$ bicontractions
remove two edges each from the initial $s-d-k+1$ edges.
To apply Lemma~\ref{lem:fibercharge}, note that each surviving
$B$ vertex $y$ satisfies
$(d_{G-a-b}(y)-3)_+\le d_G(y)-3$. Enlarge its nonnegative
sum to include all $y\in B\setminus\{b\}$, whose original
degree excess totals $s-k-3(r-1)$. The lemma then bounds the
number of cubic parallel $A$ vertices by
$2(k-1)+s-k-3(r-1)=s-3r+k+1$. Hence
\begin{equation}\label{eq:noncubiccofactor}
                  m(G-a-b)\ge3r-d-2c-4k-12.
\end{equation}
Terminal cycles satisfy these numerical inequalities directly.
Sum~\eqref{eq:cubiccofactor} and~\eqref{eq:noncubiccofactor}
at $a$. Before simplifying, the resulting bound is
\[
 \begin{split}
 m(G)\ge{}&c(s-d-2c-22)-\sum_{b\in C}N_b\\
 &+(d-c)(3r-d-2c-12)
       -4\sum_{b\in N(a)\setminus C}d(b).
 \end{split}
\]
The last degree sum is at most $s-3c$, since the omitted $c$
vertices are cubic. Multiplying this upper bound by $-4$ preserves
the required lower bound.
The resulting leading term is
$\cB=d(s-3d)+(d-c)(2d-s+3r)$; discarding a nonnegative $2c$
gives~\eqref{eq:aggregate}.

Since $3\le d_G(y)\le r$, write
$d_G(y)^2=3d_G(y)+d_G(y)(d_G(y)-3)$ and replace the first
factor in the last product by $r$. Summing gives
$\sum_y d_G(y)^2\le r(s-3r)+3s$.
Substitute this and~\eqref{eq:receiverbounds} into
\eqref{eq:aggregate} to obtain~\eqref{eq:squares}.
For~\eqref{eq:tunable}, retain the second alternative
in~\eqref{eq:cubiccofactor} when $P_b$ contains two high vertices.
Its loss from $s-d-2c-22$ is at most $K_0$, since the difference
between that expression and the second alternative is
$s-4r-d+2c-6\le K_0$. On the other members the first alternative
has loss $N_b\le2M$. Charging the latter bound to all $c$ members
and the former bound additionally to exceptional members only
weakens the lower bound. The number of exceptional members is at most
$(s-3r)/2+6r/M$, and $12d+4s\le28r$. This proves
\eqref{eq:tunable}; it does not require $d\ge r/2$.

It remains to prove~\eqref{eq:highforest}. The tree degree sum
implies
\begin{equation}\label{eq:leaves}
                  2L\le N+2p-3I+\zeta,\qquad I+L\le N.
\end{equation}
Indeed, if $J$ counts all isolates, the number of leaves in the
nontrivial components is $2(p-J)+\sum_{d_F(x)\ge3}(d_F(x)-2)$.
The last sum is at most the number of nonleaf, nonisolated vertices
plus $\zeta$. Use $J\ge I$ and then $L$ no larger than the total
number of leaves to obtain~\eqref{eq:leaves}.

Set $x=d/r$, $z=c/r$, $i=I/r$, and
$\widetilde u=(s-3r+3)/r\ge x$. Equations
\eqref{eq:aggregate}, \eqref{eq:receiverbounds} and
\eqref{eq:leaves} give
\[
 \frac{m(G)}{r^2}\ge3x-x^2+(\widetilde u-2x)z-iL/r-35/r,
\]
where $0\le i\le1-z$, $iL/r\le1/4$, and
$iL/r\le i(3-2z-3i+\widetilde u-x)/2$.
For $z\ge1/3$, substitute the latter bound. Its derivative in
$\widetilde u$ is $z-i/2\ge0$, so set $\widetilde u=x$.
The maximum of $i(3-2z-3i)/2$ for $0\le i\le1-z$ is
\[
 Q_*(z)=
 \begin{cases}(3-2z)^2/24,&z\le3/4,\\
               z(1-z)/2,&z\ge3/4.
 \end{cases}
\]
The derivatives of $3x-x^2-xz-Q_*(z)$ on its two pieces are
$-x+1/2-z/3$ and $-x-1/2+z$, both at most $-1/3$ for
$3/4\le x\le1$ and $1/3\le z\le x$. Its value is consequently
at least its value at $z=x$ plus $(x-z)/3$. At $z=x\ge3/4$
that value is $x(5/2-3x/2)$, giving the asserted bound.
For $z\le1/3$, use $iL/r\le1/4$ instead. The difference from the
same target is
\[
 x/6+x^2/2-1/4+z(1/3-x)
       \ge x^2/2-x/6-5/36\ge5/288>0.
\]
The finite error is at most $12d+4s+3c\le35r$, proving
\eqref{eq:highforest}.
\end{proof}

\begin{lemma}\label{lem:forestidentity}
Let $T_1,\ldots,T_p$ be the components of $F$ and $Q=B\setminus C$,
so $|Q|=p+1$. Define
\[
 S_{iy}=N_G(y)\cap V(T_i),\qquad D_{iy}=|S_{iy}|\quad(y\in Q).
\]
Let $h_y$ be the permanent of $D$ with column $y$ removed, and
$P_i(y,z)$ the permanent with row $i$ and columns $y,z$ removed.
The empty permanent is one. Then $h_y=m(G-a-y)$ and
\begin{equation}\label{eq:forestidentity}
 m(G)=\sum_{y\in Q\cap N(a)}h_y+
       \sum_i\sum_{\{y,z\}\subset Q}P_i(y,z)W_i(y,z),
\end{equation}
where $W_i(y,z)=\sum_{u\in S_{iy},v\in S_{iz}}\dist_{T_i}(u,v)$.
If $\ell\ge1$ is an integer and every radius-$(\ell-1)$ ball
in the trees has at most $b$ vertices,
then
\begin{equation}\label{eq:allstars}
              2m(G)\ge\ell\sum_{y\in Q}h_y
                                   \sum_{i=1}^p(D_{iy}-b)_+.
\end{equation}
\end{lemma}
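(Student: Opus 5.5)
The plan is to compute $m(G)$ by partitioning perfect matchings according to the edge at $a$, and to evaluate each cofactor $m(G-a-y)$ by bicontracting the degree-two vertices coming from $C$ (Lemma~\ref{lem:bicontraction}, whose count-preservation part is a local argument and does not need matching coverage). The inequality~\eqref{eq:allstars} should then follow by discarding the first sum and bounding $W_i$ from below through a ball-counting argument.

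\emph{Cofactors for $y\in Q$.} In $G-a-y$ every $b\in C$ has degree two, with its two remaining neighbours in $A\setminus\{a\}$. Bicontracting all of these vertices collapses each tree $T_i$ of $F$ to a single $A$ vertex. The neighbours of $b$ are distinct, and $F$ is a forest, so every bicontraction is legitimate. The resulting multigraph has rows $T_1,\ldots,T_p$ and columns $Q\setminus\{y\}$, and the multiplicity between $T_i$ and $z$ is exactly $|N_G(z)\cap V(T_i)|=D_{iz}$, since no parallel copies are discarded. Its matching count is the permanent of $D$ with column $y$ deleted, so $h_y=m(G-a-y)$. These terms give the first sum in~\eqref{eq:forestidentity}, one for each $y\in Q\cap N(a)$.

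\emph{Cofactors for $b\in C$.} In $G-a-b$ we bicontract only the vertices of $C\setminus\{b\}$. The tree $T_i$ containing $e_b$ splits into the two components $T_i',T_i''$ of $T_i-e_b$, and all other trees collapse as before. We obtain a $(p+1)\times(p+1)$ multiplicity matrix on rows $T_i',T_i''$ and the remaining trees, with columns $Q$. Expanding its permanent along the two rows $T_i',T_i''$ gives
\[
m(G-a-b)=\sum_{\{y,z\}\subset Q}P_i(y,z)\bigl(D'_{y}D''_{z}+D'_{z}D''_{y}\bigr),
\]
where $D'_y,D''_y$ count the neighbours of $y$ in $T_i'$ and $T_i''$. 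The bracket counts the pairs $(u,v)\in S_{iy}\times S_{iz}$ separated by $e_b$. We now sum over the $b\in C$ with $e_b\in E(T_i)$. A fixed pair $(u,v)$ is counted once for each edge on its $T_i$-path, that is, $\dist_{T_i}(u,v)$ times; this includes $u=v$, which contributes zero. Hence the total over these $b$ is $\sum_{\{y,z\}}P_i(y,z)W_i(y,z)$, which proves~\eqref{eq:forestidentity}.

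\emph{The inequality~\eqref{eq:allstars}.} Drop the nonnegative first sum. Fix $i$, distinct $y,z$ and $v\in S_{iz}$. At most $b$ vertices of $T_i$ lie within distance $\ell-1$ of $v$, so at least $(D_{iy}-b)_+$ vertices $u\in S_{iy}$ satisfy $\dist_{T_i}(u,v)\ge\ell$. Summing over $v$ gives $W_i(y,z)\ge\ell D_{iz}(D_{iy}-b)_+$, and by symmetry also $W_i(y,z)\ge\ell D_{iy}(D_{iz}-b)_+$. Averaging the two bounds and then multiplying by $2$ gives
\[
2W_i(y,z)\ge\ell\bigl[D_{iz}(D_{iy}-b)_++D_{iy}(D_{iz}-b)_+\bigr].
\]
This turns the sum over unordered pairs into a sum over ordered pairs $(y,z)$:
\[
2m(G)\ge\ell\sum_i\sum_y(D_{iy}-b)_+\sum_{z\ne y}D_{iz}P_i(y,z).
\]
The inner sum is the expansion of $h_y$ along row $i$, so it equals $h_y$, and this gives~\eqref{eq:allstars}.

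The main obstacle is the bookkeeping in the second step. One must check that the split permanent really factors through $P_i(y,z)$ with the correct row and column deletions, including the cases where the column $y$ or $z$ is $b$'s own removed partner. One must also check that degenerate coincidences are harmless: $u=v$ contributes distance zero, and an empty permanent is taken to be one.
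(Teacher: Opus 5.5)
Your proof is correct and follows essentially the paper's argument. You partition at $a$, bicontract the centres of $F$ (respectively $F-e_b$) to obtain $h_y$ (respectively a two-row expansion giving $P_i(y,z)$ times the number of pairs separated by $e_b$), sum over the tree edges to produce distances, and then bound $W_i$ by ball counting together with the row-$i$ expansion of $h_y$; the only difference is cosmetic, since you symmetrize $W_i(y,z)$ before summing, whereas the paper fixes $y$, sums over $z\ne y$, and then notes that each unordered pair is counted twice.
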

\begin{proof}
In $G-a-y$ bicontract all the centres represented by $F$ to obtain
the matrix defining $h_y$. Each tree becomes one row vertex,
and its $D_{iz}$ edges to column $z$ are retained as distinct
parallel copies. Choosing one entry in each row and column
therefore counts exactly the perfect matchings of the retract.
For a matching using the cubic neighbour
corresponding to an edge of $T_i$, contract $F$ minus that edge.
The two split components use two distinct columns $y,z$, leaving
the factor $P_i(y,z)$ for all other components. A choice
$u\in S_{iy}$, $v\in S_{iz}$ is possible for this split exactly
when the removed tree edge separates $u$ and $v$. In a tree
these are exactly the edges on their unique path, numbering
$\dist_{T_i}(u,v)$. Together with matchings that pair $a$ to
a vertex of $Q$, this proves
\eqref{eq:forestidentity}.

For each $v\in S_{iz}$, at least $(D_{iy}-b)_+$ members of
$S_{iy}$ are at distance at least $\ell$. Thus
\[
 \sum_{z\ne y}P_i(y,z)W_i(y,z)
 \ge\ell(D_{iy}-b)_+\sum_{z\ne y}P_i(y,z)D_{iz}
 =\ell(D_{iy}-b)_+h_y.
\]
The last equality is expansion of the permanent along row $i$.
All permanents are nonnegative, so the preceding distance bound
can be multiplied by $P_i(y,z)$ and summed without changing its
direction.
Summing over all $i,y$ counts each unordered pair exactly twice
on the left. The first term in~\eqref{eq:forestidentity} is
nonnegative, which proves~\eqref{eq:allstars}.
\end{proof}

\begin{corollary}\label{cor:boundedforest}
Suppose $p\le P$, the trees have maximum degree at most $D\ge2$,
and $r-1\ge119P(P+1)D^{119}$. Then $m(G)\ge(r-1)^2$.
\end{corollary}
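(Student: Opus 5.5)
The plan is to apply \eqref{eq:allstars} of Lemma~\ref{lem:forestidentity} with a fixed radius $\ell$, and to combine it with a lower bound on every column cofactor $h_y$ that is linear in $N=r-1$. The product is then quadratic in $N$, and the constant $119P(P+1)D^{119}$ absorbs the losses.

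\textbf{Step 1 (ball sizes and total entry mass).} In a tree of maximum degree at most $D\ge2$, a ball of radius $\ell-1$ has at most
\[
b\le1+D+D(D-1)+\cdots\le D^{\ell}
\]
vertices. The total mass of the matrix is
\[
\sum_{i,y}D_{iy}=\sum_{x\in A\setminus\{a\}}\bigl(d_G(x)-d_F(x)\bigr).
\]
Each $x$ has degree at least three in $G$, and $F$ has exactly $c=N-p$ edges. Hence this total is at least $3N-2(N-p)=N+2p\ge N$. Since there are only $p(p+1)\le P(P+1)$ entries,
\[
\sum_{i,y}(D_{iy}-b)_+\ge N-bP(P+1).
\]
By the same count, some entry $D_{i_0z_0}$ is at least $N/(P(P+1))$.

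\textbf{Step 2 (lower bound on the cofactors $h_y$).} The matrix $D$ is the biadjacency multiplicity matrix of the retract of $G-a$ obtained by bicontracting all the trees. This retract is matching covered, because $G-a-y$ is matching covered by Theorem~\ref{thm:clmstructure} and bicontraction preserves this by Lemma~\ref{lem:bicontraction}. So for every column $y$ and every positive entry $D_{iz}$ with $z\ne y$, some permutation of the column-deleted matrix uses $(i,z)$. Replacing that one edge copy by each of the $D_{iz}$ parallel copies gives
\[
h_y\ge\max_{i,\;z\ne y}D_{iz}.
\]
For $y\ne z_0$ this yields $h_y\ge N/(P(P+1))$.

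\textbf{Step 3 (main obstacle: concentration in column $z_0$).} The difficulty is that the excess mass $\sum_i(D_{iy}-b)_+$ might sit almost entirely in column $z_0$, where we only know $h_{z_0}\ge\max_{i,\,z\ne z_0}D_{iz}$. I would split into two cases.
\begin{itemize}
\item If the columns other than $z_0$ carry mass at least a constant fraction (say $1/119$) of $N$, then \eqref{eq:allstars} gives
\[
2m(G)\ge\ell\cdot\frac{N}{P(P+1)}\cdot\Bigl(\frac{N}{119}-bP(P+1)\Bigr).
\]
\item Otherwise column $z_0$ has mass close to $N$. Then some other column must still have large entries, or the row sums would violate the brace condition \eqref{eq:bracehall}: column $z_0$ would then be nearly a bottleneck, with one $B$ vertex seeing almost all of $A\setminus\{a\}$. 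I would extract $h_{z_0}\ge N/(119P(P+1))$ from this, or bound $m(G)$ directly through the $h_y$ with $y\ne z_0$. I expect this case analysis to be where the explicit constants $119$ and $D^{119}$ arise.
\end{itemize}

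\textbf{Step 4 (choosing $\ell$).} Choose $\ell$ as a fixed constant, consistent with the exponent $119$, for example $\ell=119$, so that $b\le D^{119}$. The hypothesis $N\ge119P(P+1)D^{119}$ makes the subtracted term $bP(P+1)$ at most $N/119$. It also makes the product in Step~3 at least $2N^2$, giving $m(G)\ge N^2=(r-1)^2$. The arithmetic here is routine once Step~3 provides uniform linear lower bounds on both the cofactors and the excess mass.
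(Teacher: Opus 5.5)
Your proposal has a genuine gap, and it lies in the cofactor bound itself, not only in the column-$z_0$ concentration issue you flag. The Step~2 bound $h_y\ge\max_{i,\,z\ne y}D_{iz}$ is correct but loses a factor $P(P+1)$ that nothing can recover. Even in your favourable case of Step~3 with $\ell=119$, the right side is at most $119\cdot\frac{N}{P(P+1)}\cdot\frac{N}{119}=\frac{N^2}{P(P+1)}\le N^2/2$. Moreover, your second factor $N/119-bP(P+1)$ is only guaranteed to be nonnegative, so that bound may even be zero. The claim in Step~4 that the product reaches $2N^2$ is therefore false for every $P\ge1$. The hypothesis $N\ge119P(P+1)D^{119}$ only controls the ball loss $P(P+1)b$; it says nothing about how large the cofactors are. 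The second case of Step~3 is an unexecuted hope rather than an argument, and no case analysis of this kind is needed.

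The missing idea is to argue by contradiction and invoke Lemma~\ref{lem:smallcore}; your direct approach never uses the hypothesis $m(G)<(r-1)^2$, and the strong cofactor bound is only available under it. Assume $m(G)<N^2$. For each $y\in Q$, the graph $G-a-y$ is matching covered with minimum degree at least two, so the dimension bound gives $h_y\ge\xi+2\ge2$. If $h_y=2$, then $\xi=0$, and $G$ is a biwheel or an extended biwheel with $m(G)\ge N^2$. If $3\le h_y\le N/59+2$, Lemma~\ref{lem:smallcore} gives $m(G)>N^2$ directly. Hence $h_y>N/59+2$ for every column, uniformly in $P$ and with no exceptional column.

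Your Step~1 mass count and ball bound are fine. Combining them with \eqref{eq:allstars} at $\ell=119$ and $b=D^{119}$ gives $2m(G)>\frac{119N}{59}\bigl(N-P(P+1)D^{119}\bigr)\ge\frac{119N}{59}\cdot\frac{118N}{119}=2N^2$, a contradiction. So the exponent $119=2\cdot59+1$ is dictated by the constant $59$ of Lemma~\ref{lem:smallcore}, not by a concentration analysis.
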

\begin{proof}
Suppose $m(G)<N^2$. Lemma~\ref{lem:smallcore} implies
$h_y>N/59+2$ for every $y\in Q$; a cofactor of two would already
give a biwheel or an extended biwheel. Indeed, $G-a-y$ has
minimum degree at least two and the dimension bound gives $h_y\ge2$;
if equality holds, $|E(G-a-y)|-|V(G-a-y)|=0$. The total number
of incidences counted by the entries $D_{iy}$ is
\[
                  \sum_{i,y}D_{iy}=N+2p+\zeta\ge N.
\]
This counts the edges from $A\setminus\{a\}$ to $Q$: from
the degree sum $3N+\zeta$, remove the $2c$ edges to $C$ and use
$c=N-p$.
A ball of radius $\ell-1$ in a tree of maximum degree $D$ has at
most $D^\ell$ vertices, also when $D=2$. Apply
\eqref{eq:allstars} with $\ell=119$ and $b=D^{119}$. Its right
side is bounded using
\[
 \sum_{y\in Q}\sum_{i=1}^p(D_{iy}-b)_+
 \ge\sum_{i,y}D_{iy}-p(p+1)b\ge N-P(P+1)b.
\]
The last quantity is positive by hypothesis. Since each
$h_y>N/59$, the right side of~\eqref{eq:allstars} is strictly
greater than
\[
           \frac{119N}{59}\{N-P(P+1)b\}\ge2N^2,
\]
a contradiction. The expression in braces is positive by the
hypothesis, justifying the strict comparison.
\end{proof}
\section{Excluding large degrees}\label{sec:degree}

In this section we prove the degree restriction needed in the final
reduction. Retain the constant $R$ in~\eqref{eq:threshold}.

\begin{theorem}\label{thm:degree}
Let $G$ be a simple brace of rank $r\ge R$. If
$m(G)<(r-1)^2$, then $\Delta(G)<r/2-4$.
\end{theorem}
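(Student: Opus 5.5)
We argue by contradiction. Suppose $m(G)<N^2$ with $N=r-1$, and fix a vertex $a\in A$ of maximum degree $d\ge r/2-4$. By symmetry of the colour classes we may take $a\in A$. We use the forest notation at $a$: $C$, $c$, $t=d-c$, $F$, $p=N-c$ and $\zeta$. The first task is to bound the size. Inequality~\eqref{eq:dimensionsum} gives $m(G)\ge(d-1)s-d^2-2rd+2d+3r$. Since $d\ge r/2-4$, the inequality $m(G)<N^2$ forces $s\le r^2/(d-1)+d+2r+O(1)$. This is at most about $5r$ when $d\approx r/2$, and smaller when $d$ is larger. So throughout $s\le5r$, and $s-3r$ is at most roughly $2r$. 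If $c>r-2$ the forest machinery is unavailable. In that case $a$ is adjacent to all of $B$ or to all but one vertex of $B$, and all but a couple of these neighbours are cubic. Then $G-a-b$ for a cubic $b$ has very small excess, and Lemma~\ref{lem:smallcore} (small $\xi$, or $3\le h\le N/59+2$) yields $m(G)>N^2$ unless $G$ is a (extended) biwheel. A biwheel has $m=N^2$ exactly, so it is excluded. We therefore assume $c\le r-2$.

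The main case split is on $d$.

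\textbf{Case $d\ge3r/4$.} We have $s\le5r$, so we apply~\eqref{eq:highforest}. Write $x=d/r\in[3/4,1]$. The bound $d(5r/2-3d/2)$ equals $r^2x(5-3x)/2$, which is at least $r^2\cdot\frac34\cdot\frac{11}{8}>r^2$ at $x=3/4$. At $x=1$ it equals exactly $r^2$, with the extra term $(d-c)r/3$ available. Near $x=1$ the $-35r$ error then must be absorbed by $t=d-c$ or by the $x(1-x)$ slack. If $t$ is small and $d\approx r$, the vertex $a$ looks like a biwheel hub. We handle this residual regime via Lemma~\ref{lem:smallcore} applied to $G-a-b$ for a cubic neighbour $b$: the excess $\xi$ of $H$ is controlled by $\zeta$ and $t$, which are both $O(r/59)$ there, and Lemma~\ref{lem:smallcore} gives $m(G)>N^2$.

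\textbf{Case $r/2-4\le d<3r/4$.} Here we use~\eqref{eq:aggregate}/\eqref{eq:squares} with $\cB=3rd-d^2+(s-3r-2d)c$, and~\eqref{eq:tunable} when $s\le4r$. The quantity $3rd-d^2$ exceeds $r^2$ by a margin that is linear in $d-r/2$ only at $d\approx r/2$; indeed at $d=r/2$ it equals $5r^2/4$. The term $(s-3r-2d)c$ is negative, of order at most $-rc$, which is dangerous. The dimension bound, however, ties $s$ to $d$, and $c\le d$. We split further according to $c$.

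If $c$ is small, then $\cB\approx3rd-d^2$ already beats $r^2$ with an $\Omega(r^2)$ margin. This margin absorbs the $\frac r8(s-3r)$ and $O(r)$ losses in~\eqref{eq:squares}.

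If $c$ is large, then $p=N-c$ is bounded by a constant times... In fact we do not get boundedness directly. Instead we use Lemma~\ref{lem:forestidentity} and Corollary~\ref{cor:boundedforest}. We show that either the forest has few components and bounded degree, so Corollary~\ref{cor:boundedforest} applies since $r\ge R$ is large, or~\eqref{eq:allstars} with $h_y>N/59+2$ already exceeds $2N^2$.

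The high-degree trees are a separate issue. A tree vertex of forest degree $D$ is an $A$-vertex of degree $\ge D$. Its excess feeds $\zeta\le s-3r$, which is $O(r)$. So only $O(r/D)$ such vertices exist, and we choose $D$ a large constant, as in the constant $7^{119}$ in $R$.

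\textbf{Main obstacle.} We expect the intermediate case $d$ between $r/2$ and $3r/4$ with $c$ large and $s$ between $4r$ and $5r$ to be the hardest. There the negative term $(s-3r-2d)c$ and the receiver loss $\sum N_b$ are both of order $r^2$. We would first try to balance~\eqref{eq:squares} against~\eqref{eq:dimensionsum} as a two-variable optimization in $(s/r,d/r)$, in the style of the calculus at the end of Lemma~\ref{lem:aggregate}. If this leaves a gap, we fall back on~\eqref{eq:allstars} with a degree threshold $M$ chosen as in~\eqref{eq:tunable}.
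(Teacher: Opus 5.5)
Your opening moves match the paper: you bound $s$ via \eqref{eq:dimensionsum}, and for $d\ge3r/4$ you use \eqref{eq:highforest} to make $k=r-d$ and $t$ bounded. But there is a genuine gap where the theorem is actually hard, the window $r/2-4\le d<3r/5$. The range $[3r/5,3r/4]$ is pure calculus with \eqref{eq:dimensionsum} and \eqref{eq:squares} (Lemma~\ref{lem:middle}). Inside the window, $s\ge39r/10$ is equally easy, so the regime $4r\le s\le5r$ you single out is not the obstacle.

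The critical configuration is $d\approx c\approx r/2$, $s\approx7r/2$, $\zeta=O(1)$. There $\cB=r^2+O(r)$, so \eqref{eq:aggregate}, \eqref{eq:squares} and \eqref{eq:tunable} all fall below $(r-1)^2$ by at least a multiple of $r$. No first-order optimisation in $(s/r,d/r,c/r)$ can close this.

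Your fallback does not reach it either. Here $p=N-c\ge N-d>r/4-1$, while Corollary~\ref{cor:boundedforest} needs $119p(p+1)D^{119}\le r-1$. The estimate $\sum_{i,y}(D_{iy}-b)_+\ge N+2p+\zeta-p(p+1)b$ behind \eqref{eq:allstars} is then negative, and the roughly $r/2$ isolated components of $F$ contribute nothing to it. Likewise, $\zeta=O(r)$ only limits high-degree tree vertices to $O(r/D)$; it does not bound the maximum degree.

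The missing idea is the paper's second-order argument:
\begin{itemize}
\item Lemma~\ref{lem:window} (via \eqref{eq:tunable} with $M=240$) squeezes $d-r/2$, $\zeta$ and $t$ to absolute constants, and a partition at $a$ then gives $t\le4$.
\item Averaging over $C$ yields $b$ with $m(J_b)\le4q_b+4$. Since $|E(J_b)|-3q_b\ge r/2-11$, Lemma~\ref{lem:dichotomy} forces a unit pair.
\item Then \eqref{eq:nonhub} puts all but seven edges of $F$ into one tree $T$ of order at least $r/2-14$ and maximum degree at most $12806$.
\item Each edge $f$ of $T$ yields $B_b\ge2\min\{s_f,K\}-4$ through another forced unit pair. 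Lemma~\ref{lem:treecut} sums these to about $6r$, contradicting the budget $\sum_bB_b<5r$ of \eqref{eq:totalbudget}.
\end{itemize}

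Your treatment of the residual near-full regime also fails, and so does the identical argument in your case $c>r-2$. For a cubic neighbour $b$ of $a$, the graph $H=G-a-b$ has $\xi=s-d-2r=r-3+\zeta$, and \eqref{eq:dimcofactor} gives $h\ge r-1+\zeta$. So neither criterion of Lemma~\ref{lem:smallcore} applies. That lemma needs $b$ to be essentially a second hub, of degree close to $r$, and you have not produced one: the $B$-side excess, about $r$, may be spread over the $k+t$ vertices of $B\setminus C$.

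The paper instead uses the exact identity \eqref{eq:exactdeficit}. It forces $\zeta\le k$. Together with $\sigma_b\ge q_b-2$ for $b\in C$ (from Lemma~\ref{lem:linearaux}), it gives $t+\zeta\le4$, hence $p=k+t-1\le263$ and $\Delta(F)\le7$. It is in this near-full case, where $p$ genuinely is bounded, that Corollary~\ref{cor:boundedforest} finishes the proof. The cases $d=r$ and $d=c=r-1$ follow from \eqref{eq:exactdeficit} and the spanning-cycle (biwheel) argument.
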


We use the notation $d,c,t,\zeta,F$ of Section~\ref{sec:forests}
whenever a vertex $a$ is specified. All elementary estimates below
are applied at $r\ge R>10^8$.

\begin{lemma}\label{lem:nearfull}
A simple brace of rank $r\ge R$ with a vertex of degree at least
$3r/4$ satisfies $m(G)\ge(r-1)^2$.
\end{lemma}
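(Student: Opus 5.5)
Fix a vertex $a$ of maximum degree $d\ge3r/4$, say $a\in A$. We may assume $d\le r-1$: if $d=r$, then $G-a-b$ is matching covered of rank $r-1$ for each neighbour $b$, each cofactor is at least $2$, and a sharper version of the summation below applies. Suppose $m(G)<(r-1)^2$ and split according to the size $s$.

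\emph{Large size.} If $s>5r$, use~\eqref{eq:dimensionsum} at $a$:
\[
m(G)\ge(d-1)s-d^2-2rd+2d+3r .
\]
Because the coefficient $d-1$ of $s$ is positive and $s>5r$, the right side is at least $(d-1)5r-d^2-2rd+2d+3r=3rd-d^2-2r+2d$. For $3r/4\le d\le r$ this is at least $2r^2-O(r)>(r-1)^2$.

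\emph{Moderate size.} For $s\le5r$ we are inside the hypotheses of~\eqref{eq:highforest}. We may assume $c\le r-2$, so that the forest $F$ of Section~\ref{sec:forests} exists. If instead $c\ge r-1$, then $a$ has at least $r-1$ cubic neighbours, and~\eqref{eq:bracehall} applied to that set fails unless the graph is essentially a biwheel. In that case a direct count, or Lemma~\ref{lem:smallcore}, finishes. Then~\eqref{eq:highforest} gives
\[
m(G)\ge d(5r/2-3d/2)+(d-c)r/3-35r .
\]
The function $x(5/2-3x/2)$ is increasing on $[3/4,5/6]$ and has value $1$ at $x=1$. It equals $(15/4)/4\cdot r^2$-scale, namely $r^2(3/4)(5/2-9/8)=33r^2/32$ at $x=3/4$. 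On $[3/4,1]$ its minimum over the endpoints is $r^2$ at $d=r$. At $d=r-1$ the value is $(r-1)(r+3/2)/1\approx r^2+r/2$. So we get
\[
m(G)\ge r^2+\Omega(r)-35r
\]
when $d$ is bounded away from $r$. The boundary $d$ close to $r$ loses a linear error.

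\emph{Main obstacle.} The delicate range is $d=r-1$ (or $d=r-O(1)$) with $c$ close to $d$. There $d(5r/2-3d/2)=r^2+r/2-1$, and the $-35r$ error term swamps the gain. I would treat this range separately.

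If $d-c\ge 110$, the term $(d-c)r/3$ covers the error. Otherwise nearly all neighbours of $a$ are cubic, so $p=N-c$ is small, about $r-1-c\le111+(r-1-d)$. The forest $F$ then has at most $P=O(1)$ components. Its maximum degree is bounded: a tree vertex $x$ of degree $D$ in $F$ is a vertex of $G$ adjacent to $D$ cubic neighbours of $a$. I would bound $D$ by noting that a high tree degree forces a high-degree vertex of $A\setminus\{a\}$, which contributes to $\zeta$, and $\zeta\le s-3r\le2r$ is not bounded.

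So the cleaner route is to apply Lemma~\ref{lem:forestidentity} directly. Inequality~\eqref{eq:allstars}, together with $h_y>N/59+2$ from Lemma~\ref{lem:smallcore}, is used as in Corollary~\ref{cor:boundedforest}. Any row with $D_{iy}$ much larger than a ball size yields $m(G)\ge2N^2$. When all tree degrees are bounded by some $D$, Corollary~\ref{cor:boundedforest} applies with $P=O(1)$, since $r\ge R$ exceeds $119P(P+1)D^{119}$. Choosing the constants so that $P\le1$ and $D\le 7$ are consistent with $R$ in~\eqref{eq:threshold} is where I expect the real bookkeeping difficulty.
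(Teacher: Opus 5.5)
There is a genuine gap, at exactly the point you flag: in the delicate range ($k=r-d$ and $t=d-c$ both bounded) you never bound the maximum degree of $F$. Without that bound, none of the forest machinery applies. Corollary~\ref{cor:boundedforest} needs both $p\le P$ and $\Delta(F)\le D$ with $r-1\ge119P(P+1)D^{119}$. Your proposed bypass through \eqref{eq:allstars} does not work. That inequality beats $2N^2$ only if $\ell\ge119$ (so that $\ell h_y>2N$ via $h_y>N/59$) and $\sum_y\sum_i(D_{iy}-b)_+$ is of order $N$, where $b$ bounds radius-$(\ell-1)$ balls in the trees. A single vertex of $F$ with degree comparable to $r$ makes $b$ comparable to $r$, and the bound collapses; Lemma~\ref{lem:treecut} has the same dependence. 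Also, $P\le1$ is not available. In this range one only gets $p=k+t-1\le263$. Indeed $R-1=119\cdot263\cdot264\cdot7^{119}$ is precisely the corollary's threshold for $P=263$, $D=7$.

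The missing idea is that $\zeta$, which controls $\Delta(F)\le3+\zeta$, is bounded by an exact deficit computation at $a$, and you already had the tool. Substitute $s=3r+d-3+\zeta$ and $d=r-k$ into \eqref{eq:dimensionsum}, which you used only crudely for $s>5r$. This gives exactly $m(G)-(r-1)^2\ge2-k(r-2)+(r-k-1)\zeta$.

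This inequality also settles $d=r$. Your remark that each cofactor is at least two gives only $m(G)\ge2r$ there. Once \eqref{eq:highforest} yields $k/8+t/3<33+1/r$, hence $k\le264$, the assumption $\zeta\ge k+1$ would give $m(G)-(r-1)^2\ge r-k^2+1>0$. So $\zeta\le k$.

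To reach $D=7$, as $R$ requires, keep the slacks $\sigma_b=m(G-a-b)-(s-d-d_G(b)-2r+5)\ge0$. For $b\in C$, bicontraction preserves both $m$ and $|E|-2q+2$, so \eqref{eq:edgeq} applied to $J_b$ gives $\sigma_b\ge q_b-2\ge k+t-4$. Summing over the $c=r-k-t$ cubic neighbours yields \eqref{eq:nearfulldeficit}. This forces $t+\zeta\le4$, hence $\Delta(F)\le7$, and the corollary applies.

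Two smaller corrections. First, \eqref{eq:bracehall} does not apply to a set of size $r-1$. Handle $c=d=r-1$ instead by noting that the auxiliary graph on $A\setminus\{a\}$ has $r-1$ edges on $r-1$ vertices, so it contains a cycle. Shorter cycles are excluded, so that cycle is Hamiltonian, and this forces the biwheel. Second, the large-size bound is $27r^2/16-2r$, not $2r^2-O(r)$, though it still suffices.
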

\begin{proof}
Suppose otherwise, and write $d=r-k$. For each $b\in N(a)$ define
the nonnegative integer
\[
 \sigma_b=m(G-a-b)-(s-d-d_G(b)-2r+5),
\]
whose nonnegativity is~\eqref{eq:dimcofactor}, and put
$\tau=\sum_{b\notin N(a)}(d_G(b)-3)\ge0$. Partitioning
matchings at $a$ gives the exact identity
\begin{equation}\label{eq:exactdeficit}
 m(G)-(r-1)^2=2-k(r-2)+(r-k-1)\zeta+\tau+
                                      \sum_{b\in N(a)}\sigma_b.
\end{equation}
Indeed, substitute $s=4r-k-3+\zeta$ and
$\sum_{b\in N(a)}d_G(b)=s-3k-\tau$ into the partition.
For $k=0$ this already contradicts the assumption. If
$d=c=r-1$, the auxiliary graph has $r-1$ edges and $r-1$
vertices. Condition~\eqref{eq:bracehall} excludes every shorter cycle, so it
is a spanning cycle: it cannot be a forest with that many edges,
and its only permitted cycle already uses every vertex and edge.
Minimum degree forces every remaining spoke,
and $G$ is the biwheel. We may therefore assume $k\ge1$ and
$c\le r-2$.

If $s\ge5r$, \eqref{eq:dimensionsum} gives
\[
 m(G)\ge3rd-d^2+2d-2r\ge\frac{27}{16}r^2-2r>(r-1)^2.
\]
Thus $s<5r$, and~\eqref{eq:highforest} applies. Since
$d(5r/2-3d/2)-r^2=k(3d/2-r)\ge kr/8$, comparison with
$(r-1)^2$ gives $k/8+t/3<33+1/r$. Multiplication by $24$
and integrality (with $r>24$) yield
\begin{equation}\label{eq:nearfullparameters}
                3k+8t\le792,\qquad k\le264,\quad t\le99.
\end{equation}
If $\zeta\ge k+1$, \eqref{eq:exactdeficit} gives
$m(G)-(r-1)^2\ge r-k^2+1>0$: discard the nonnegative
$\tau$ and $\sigma_b$ terms and use $r-k-1>0$ to replace
$\zeta$ by $k+1$. Hence $\zeta\le k$.

For $b\in C$, bicontraction preserves both the matching count
and the expression $|E|-2\operatorname{rank}+2$, since each
bicontraction removes two edges and lowers the rank by one.
Thus $\sigma_b=m(J_b)-(|E(J_b)|-2q_b+2)$.
Lemma~\ref{lem:linearaux} gives
$\sigma_b\ge q_b-2\ge k+t-4$, also for a terminal double edge.
Sum this over the $c=r-k-t$ cubic neighbours and use only
$\sigma_b\ge0$ for the others. Substitution in
\eqref{eq:exactdeficit}, with $\tau\ge0$, yields
\begin{equation}\label{eq:nearfulldeficit}
 \begin{split}
 m(G)-(r-1)^2\ge{}&(t+\zeta-4)r-(k+t)(k+t-4)\\
                 &-(k+1)\zeta+2k+2.
 \end{split}
\end{equation}
If $t+\zeta\ge5$, its right side is at least $r$ minus
$2k^2+2kt+t^2-5k-4t-2$. This convex quadratic has maximum
$138070$ on the triangle $k,t\ge0$, $3k+8t\le792$:
its values at the three vertices are $-2$, $138070$ and $9403$.
Thus $t+\zeta\le4$. In particular,
\[
                 p=k+t-1\le263,\qquad \Delta(F)\le7.
\]
The first bound follows from $3k+8t\le792$. For the second,
each vertex of $F$ has degree at most its degree in $G$, which
is at most $3+\zeta\le7$ on $A\setminus\{a\}$.
Now apply Corollary~\ref{cor:boundedforest} with $P=263$ and
$D=7$. The definition of $R$ is precisely the required threshold.
\end{proof}

\begin{lemma}\label{lem:middle}
A simple brace of rank $r\ge R$ with a vertex of degree in
$[3r/5,3r/4]$ satisfies $m(G)\ge(r-1)^2$.
\end{lemma}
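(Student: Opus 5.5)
Suppose $m(G)<(r-1)^2$ and fix a vertex $a$ of degree $d\in[3r/5,3r/4]$. We use the notation $c,t,\zeta,F$ of Section~\ref{sec:forests}. As in Lemma~\ref{lem:nearfull}, the first step is to bound the size $s$.

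\emph{Bounding $s$.} Inequality~\eqref{eq:dimensionsum} with $s\ge5r$ gives
\[
 m(G)\ge 3rd-d^2-O(r)\ge r^2\bigl(9/5-9/25\bigr)-O(r)>(r-1)^2,
\]
so we may assume $s<5r$. In fact~\eqref{eq:dimensionsum} is increasing in $s$ with slope $d-1$. Substituting $d\ge 3r/5$, it already exceeds $(r-1)^2$ once $s\ge 3r+(r^2+d^2+2rd-3r)/(d-1)$. This is weaker than needed, so the remaining range $s<5r$ must be handled with the forest bounds.

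\emph{Main step: the aggregate bound.} Apply~\eqref{eq:aggregate} together with~\eqref{eq:receiverbounds}, exactly as in the proof of~\eqref{eq:highforest}. Write $x=d/r\in[3/5,3/4]$, $z=c/r$, $i=I/r$, and $\widetilde u=(s-3r+3)/r\ge x$. We aim to show that
\[
 3x-x^2+(\widetilde u-2x)z-iL/r
\]
exceeds $1$ by a fixed positive margin. We use $iL/r\le\min\{1/4,\;i(3-2z-3i+\widetilde u-x)/2\}$ from~\eqref{eq:leaves}. Setting $\widetilde u=x$ as before (the derivative in $\widetilde u$ is nonnegative when $z\ge i/2$) gives the function
\[
 f(x,z)=3x-x^2-xz-Q_*(z),
\]
which must be compared with $1$ on $z\le x$. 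At $z=x=3/5$ we get $9/5-9/25-9/25-(3-6/5)^2/24=1.08-0.135=0.945<1$, so this crude route fails. This is the expected obstacle.

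\emph{The deficient corner.} The fix is to exploit the deficiency when $c$ is close to $d$, that is, when $t$ is small. Two sources of improvement are available:
\begin{itemize}
\item[(i)] the noncubic-neighbour term in the exact identity~\eqref{eq:exactdeficit}, whose coefficient $(d-1)\zeta$ is large;
\item[(ii)] the sharper $\sum_b N_b\le IL$ bound with the constraint $I+L\le N$ combined with $2L\le N+2p-3I+\zeta$, where $p=N-c$ is now at least $r/4$.
\end{itemize}

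I would redo the optimisation keeping $\zeta$ explicitly, with $\widetilde u=x+\zeta/r$. If $\zeta\ge\delta r$ for a suitable constant $\delta$, the term $\widetilde u z$ gains $\delta z$, which should cover the deficit of about $0.055$. Otherwise $\zeta$ is small. In that case $L\le (N+2p-3I)/2+o(r)$ limits $IL$ more tightly, and the tunable estimate~\eqref{eq:tunable}, valid for $s\le4r$ with $M\approx\sqrt r$, removes the $N_b$ loss almost entirely, at cost $O(r^{3/2})$. Without the $N_b$ loss the bound is $3x-x^2+(\widetilde u-2x)z$. Since $K_0$ is small here, it is at least $3x-x^2-xz\ge 3x-2x^2\ge 1.08$ at $x=3/5$ and at least $1.125$ at $x=3/4$. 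Both exceed $1$, so for $s\le4r$ this settles matters. For $4r<s<5r$ we have $\widetilde u\ge1$, and then $(\widetilde u-2x)z\ge (1-2x)z$, which is also better than the $-xz$ used above.

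\emph{Main obstacle.} I therefore expect the real work to be the intermediate range $4r<s<5r$ with large $z$. There I would try~\eqref{eq:squares}: its loss $\tfrac r8(s-3r)\le r^2/4$ must be balanced against the gain $(s-3r-2d)c$, and I would check numerically that the minimum over the region stays above $r^2$ with margin. If some corner still fails, I would fall back on Corollary~\ref{cor:boundedforest} as in Lemma~\ref{lem:nearfull}, after showing that $p$ and $\Delta(F)$ are bounded in that corner.
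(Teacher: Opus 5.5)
\textbf{There is a genuine gap.} Your proposal does not close, and the step you rely on in the critical regime is false.

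\emph{The branch $s\le4r$ with small $\zeta$.} You claim $K_0$ is small there, but
$K_0=(s-4r-d+2c)_+=(2c-r-3+\zeta)_+$, which is about $(2x-1)r\ge r/5$ whenever $c$ is close to $d$. So \eqref{eq:tunable} does not remove the $N_b$ loss. Its leading term is $r^2F(x,z,u)$, with $F$ as in Lemma~\ref{lem:window}. At $z=x$, $\zeta=0$ this equals $x(7/2-3x)r^2$, which is below $r^2$ for $x>2/3$. For example, $d=c=3r/4$ and $s\approx15r/4$ give $\cB\approx\frac98r^2$, but the charge $\frac12(s-3r)K_0\approx\frac3{16}r^2$ leaves only $\frac{15}{16}r^2$.

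\emph{Your other tool for this branch.} Using \eqref{eq:aggregate} with $\sum_bN_b\le IL$ and \eqref{eq:leaves} gives $\frac72x-\frac{13}6x^2-\frac38$ at $z=x$, $\zeta=0$. This is below $1$ for $x$ up to about $0.674$. Invoking $2L\le N+2p-3I+\zeta$ with small $\zeta$ is exactly how $Q_*$ was derived, so it gives nothing tighter. At $x=z=0.67$, $\zeta=0$, both estimates fall short of $r^2$.

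\emph{The remaining pieces.} The branch $\zeta\ge\delta r$ and the range $4r<s<5r$ are left unverified. The fallback is also unavailable: Corollary~\ref{cor:boundedforest} needs $p$ bounded by a constant, whereas here $p=N-c\ge r/4-1$, as you note yourself.

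\textbf{The missing step.} Use \eqref{eq:squares} on the whole range $s<5r$, not only for $4r<s<5r$ as your last paragraph suggests. Its $N_b$ loss is $\frac r8(s-3r)+O(r)$. Near $s=3r+d$ with $x=3/5$ this is about $0.075r^2$, against $Q_*(3/5)r^2=0.135r^2$ for the $IL$ route. The paper splits at $s=3r+2d$.
\begin{itemize}
\item If $s\le3r+2d$, then $\cB=d(s-3d)+(d-c)(2d-s+3r)\ge d(s-3d)$. The right side of \eqref{eq:squares} then has positive slope $d-r/8-35/8$ in $s$. Setting $s=3r+d-3$ gives $m(G)\ge d(23r/8-2d)-O(r)\ge\frac{201}{200}r^2-33r$, with the worst case at $x=3/5$.
\item If $3r+2d\le s<5r$, discard the nonnegative term $(s-3r-2d)c$ of $\cB$. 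This gives $m(G)\ge(3x-x^2-\frac14)r^2-O(r)\ge\frac{119}{100}r^2-O(r)$.
\end{itemize}
Together with your case $s\ge5r$, this finishes the proof with no case analysis in $\zeta$, $c$ or $K_0$.
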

\begin{proof}
The forest exists in this range. If $s\ge5r$, use
\eqref{eq:dimensionsum} to obtain
$m(G)\ge(36/25)r^2-2r>(r-1)^2$.
Suppose $s\le3r+2d$. Then
$\cB\ge d(s-3d)$, because
$\cB=d(s-3d)+(d-c)(2d-s+3r)$ and both factors in the last
term are nonnegative. Consequently~\eqref{eq:squares} has positive
coefficient $d-r/8-35/8$ in $s$. Substituting
$s\ge3r+d-3$ gives
\begin{align*}
 m(G)&\ge d(23r/8-2d)-(155/8)d-(51/4)r+105/8\\
     &\ge(201/200)r^2-33r>(r-1)^2.
\end{align*}
Here $x(23/8-2x)$ is concave on $[3/5,3/4]$, with endpoint
values $201/200$ and $33/32$. The last strict comparison holds
already for $r\ge6201$.
If $3r+2d\le s\le5r$, use instead
$\cB=3rd-d^2+(s-3r-2d)c\ge3rd-d^2$, discarding its
nonnegative last term. Equation~\eqref{eq:squares} now gives
\[
              m(G)\ge(119/100)r^2-(271/8)r>(r-1)^2.
\]
This uses $3x-x^2-1/4\ge119/100$ on the same interval.
\end{proof}

\begin{lemma}\label{lem:window}
Suppose $r\ge R$, $m(G)<(r-1)^2$, and
$r/2-4\le d=d_G(a)<3r/5$. Put $e=d-r/2$. Then
\begin{equation}\label{eq:window}
             -4\le e<2134,\qquad \zeta\le12803,
             \qquad t\le3200.
\end{equation}
\end{lemma}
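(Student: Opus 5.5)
We argue as in Lemma~\ref{lem:nearfull}, with the aggregate bounds of Lemma~\ref{lem:aggregate} taking the role of~\eqref{eq:highforest}. Put $x=d/r\in[1/2-4/r,3/5)$ and write $s=3r+d-3+\zeta$, as in~\eqref{eq:forestparameters}. The conclusions will come in the order: first rule out $s\ge5r$, then bound $e$, then $\zeta$, then $t$.

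\emph{Step 1: $s<5r$.} Apply~\eqref{eq:dimensionsum} with $d\ge r/2-4$:
\[
 m(G)\ge(d-1)s-d^2-2rd+\cdots .
\]
For $s\ge5r$ this is at least $5rd-d^2-2rd-O(r)=3rd-d^2-O(r)$. Since $3x-x^2\ge5/4$ for $x\ge1/2$, this exceeds $(r-1)^2$. Hence $\zeta<2r$, and the forest is available because $c\le d\le r-2$.

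\emph{Step 2: bound $e$.} The natural tool is~\eqref{eq:squares}. Write it as
\[
 \cB=d(s-3d)+(d-c)(2d-s+3r),
\]
and split on the sign of $2d-s+3r$, as in Lemma~\ref{lem:middle}. If $s\le3r+2d$, then $\cB\ge d(s-3d)$. The coefficient of $s$ is then $d-r/8-O(1)>0$, so the bound is minimized at $s=3r+d-3$. This gives
\[
 m(G)\ge d(3r-2d)-\frac{r}{8}d-O(r)
 =r^2\Bigl(x(23/8-2x)\Bigr)-O(r).
\]
At $x=1/2$ this is $r^2\cdot 15/16$, below $1$. So near $x=1/2$ the $r/8(s-3r)$ loss is too wasteful, and we need~\eqref{eq:tunable}. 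That bound requires $s\le4r$. If instead $4r<s<5r$, then~\eqref{eq:squares} with $\cB\ge3rd-d^2$ already gives a coefficient above $1$ near $x=1/2$.

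With $s\le4r$, take $M$ a large absolute constant (say $M\approx10^4$). The terms $2Mc$, $6rK_0/M$ and $28r$ are then all $O(r)$, and the leading part of $\cB$ behaves like $3rd-d^2+(s-3r-2d)c$. At $d=r/2+e$ this is
\[
 \tfrac54r^2+2er-O(e^2)
\]
when $c$ is handled via $s-3r-2d\ge\zeta-d-3$. Comparing with $(r-1)^2$ then forces $e$ to be bounded. The explicit constants $2134$ and $12803$ come from tracking the linear-in-$r$ errors, for instance $35r$-type terms divided by the coefficient of $e$, and I expect them to come from this bookkeeping.

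\emph{Step 3: bound $\zeta$.} Once $e$ is bounded, a term of the form $(\text{positive coefficient})\cdot\zeta\cdot r$ must appear. The coefficient of $s$ in~\eqref{eq:tunable} is $c-\tfrac12K_0$-type, while in the $-d^2$-free form of~\eqref{eq:dimensionsum} it is about $(d-1)\zeta$. Combining these, $d\zeta$ minus the deficit gives $\zeta\le$ a constant.

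\emph{Step 4: bound $t$.} The term $(d-c)(2d-s+3r)=t(2d-s+3r)$ contributes about $t(d-\zeta)\approx tr/2$. Comparison against the $O(r)$ slack gives $t\le3200$.

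The main obstacle is Step 2. We must show that at $x\approx1/2$ the quantity $\cB$ really does exceed $r^2$ up to $O(r)$ in every subcase, including the regime where $c$ is close to $d$ and $s$ is close to $3r+d$. There the quadratic terms nearly cancel and all of the gain comes from the $2er$ and $\zeta$-dependent terms. What I would do first is expand~\eqref{eq:tunable} exactly in the variables $e,\zeta,t$. I would then check that the $r^2$ coefficient is exactly $1$ at $e=0$, so that the $r$-linear terms alone decide the bound and yield the stated constants.
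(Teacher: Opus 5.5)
\paragraph{Verdict.} There is a genuine gap. Your architecture is the paper's: \eqref{eq:dimensionsum} for large $s$, \eqref{eq:squares} in the middle, \eqref{eq:tunable} with a fixed $M$ near $s=3r+d$, then linearization at $x=1/2$. However, your hand-off points $s=4r$ and $s=5r$ are exactly where the aggregate bounds degenerate, and the step that actually bounds $\zeta$ is missing.

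\paragraph{Failure near $s=4r$.} Put $u=(s-3r)/r$. At $d=c=r/2$ and $\zeta\ge3$, the leading part of \eqref{eq:tunable} is exactly $\cB-\tfrac12(s-3r)K_0=r^2\bigl(1+v(1-u)/2\bigr)$, with $v=(\zeta-3)/r$. As $s\uparrow4r$ the gain $v(1-u)/2$ vanishes, although $\zeta\approx r/2$ there. So \eqref{eq:tunable} yields only $m(G)\ge r^2-3r^2/M-O(Mr)$, and no constant bound on $\zeta$ can come from it.

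\paragraph{Failure near $s=5r$.} With $\cB\ge3rd-d^2$, \eqref{eq:squares} has leading coefficient $3x-x^2-u/8$, which is exactly $1$ at $x=1/2$, $s=5r$. So your ``coefficient above $1$'' on $4r<s<5r$ fails near $5r$, and your Step~1 cutoff at $s\ge5r$ does not cover that range.

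\paragraph{How the paper closes both ends.} It uses:
\begin{itemize}
\item \eqref{eq:dimensionsum} for $s\ge9r/2$, or $s\ge23r/5$ when $d<r/2$;
\item \eqref{eq:squares} for $39r/10\le s\le9r/2$, after minimizing $3x-x^2+(u-2x)z-u/8$ over $0\le z\le x$, which gives $87/80$ or $17/16$;
\item \eqref{eq:tunable} only for $u<9/10$.
\end{itemize}
The restriction $u<9/10$ is the missing idea. Write $F(x,z,u)=3x-x^2+(u-2x)z-\frac u2(u-1-x+2z)_+$. Under that restriction both $z$-slopes of $F$ are at most $-1/10$. Also $F(x,x,u_0)=x(7/2-3x)+(u_0-x)(1-u_0)/2$ with $1-u_0\ge1/10$. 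Together these give $F\ge1+\alpha/5+v/20+(x-z)/10$.

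\paragraph{Step~2 computation.} You drop the loss $\tfrac12(s-3r)K_0$. But $s-3r\approx r/2$ and $K_0=(2e-2t+\zeta-3)_+$, so this loss is first order in $r$ and halves the gains in $e$ and $\zeta$. Treating $6rK_0/M$ as $O(r)$ presupposes $K_0=O(1)$, which is what you are trying to prove. Your expansion $\tfrac54r^2+2er$ is also wrong: $(s-3r-2d)c=(\zeta-d-3)c$ contributes $-d^2$ at $c=d$. In fact $\cB=r^2+re-2e^2+(\zeta-3)d+t(d+3-\zeta)$.

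\paragraph{Step~3.} This step cannot work. Separate lower bounds combine by maximum, not by sum. Moreover, \eqref{eq:dimensionsum} at $d\approx r/2$ gives only about $r^2/2+r\zeta/2$, which is useless unless $\zeta\gtrsim r$.

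\paragraph{Constants.} The stated constants come from $M=240$. Then \eqref{eq:tunable} becomes $m(G)\ge r^2+\frac{3r}{20}e+\frac r{40}(\zeta-3)_++\frac r{10}t-322r$. This is equivalent to $6e+(\zeta-3)_++4t\le12800$, which gives all three bounds at once when $e\ge0$. With your $M\approx10^4$, the term $2Mc\approx10^4r$ alone forces constants far above $12803$.

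\paragraph{The case $-4\le e<0$.} This case needs its own transfer to $x=1/2$. The paper obtains $(\zeta-3+e)_++4(t-e)\le12000$ there.
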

\begin{proof}
We first show that $s<39r/10$. For $d\ge r/2$, if $s\ge9r/2$,
\eqref{eq:dimensionsum} gives $m(G)\ge r^2-r/2>(r-1)^2$.
For $39r/10\le s\le9r/2$, put $x=d/r$, $z=c/r$, and
$u=(s-3r)/r$. The leading expression in~\eqref{eq:squares} is
\[
                     3x-x^2+(u-2x)z-u/8.
\]
Here $0\le z\le x$. If the coefficient $u-2x$ is negative,
the expression is minimized by setting $z=x$; if it is positive,
it is minimized by setting $z=0$.
If $u\le2x$ it is at least $3x-3x^2+u(x-1/8)\ge87/80$;
if $u\ge2x$ it is at least $3x-x^2-u/8\ge17/16$.
The first lower bound uses $u\ge9/10$, then minimizes the
resulting concave quadratic in $x$ at the endpoints of
$[1/2,3/5]$; the second uses $u\le3/2$ and $x\ge1/2$.
The error is at most $32r$, giving a contradiction.

For $r/2-4\le d<r/2$, if $s\ge23r/5$,
\eqref{eq:dimensionsum} gives
$m(G)\ge(21/20)r^2-7r-24>(r-1)^2$.
For $39r/10\le s\le23r/5$, moving $x$ up to $1/2$ in the
preceding two leading expressions loses at most $16/r$ and
$12/r$, respectively; their $x$ derivatives have absolute values
at most four and three. At $x=1/2$ the bounds are $87/80$ and
$21/20$, since $u\le8/5$. The error in~\eqref{eq:squares} is
less than $33r$. Thus $m(G)\ge(21/20)r^2-49r>(r-1)^2$.
All these strict comparisons hold for $r\ge1000$.

We may now assume $u<9/10$ and apply~\eqref{eq:tunable} with
$M=240$. Define
\[
 F(x,z,u)=3x-x^2+(u-2x)z
                        -\frac u2(u-1-x+2z)_+.
\]
Divide the definition of $K_0$ by $r$ to get
$K_0/r=(u-1-x+2z)_+$. Substituting this and $M=240$ into
\eqref{eq:tunable} gives
\begin{equation}\label{eq:Fbound}
                    m(G)\ge r^2F(x,z,u)-480c-rK_0/40-28r.
\end{equation}
For $1/2\le x\le3/5$, $x\le u_0\le9/10$, and $0\le z\le x$,
put $\alpha=x-1/2$ and $v=u_0-x$. Then
\begin{equation}\label{eq:scalar}
                 F(x,z,u_0)\ge1+\alpha/5+v/20+(x-z)/10.
\end{equation}
Indeed, the two slopes in $z$ are $u_0-2x$ and $-2x$, both at
most $-1/10$. At $z=x$ we have the exact expression
\[
 F(x,x,u_0)=x(7/2-3x)+(u_0-x)(1-u_0)/2.
\]
Now $x(7/2-3x)-1=\alpha/2-3\alpha^2\ge\alpha/5$ and
$1-u_0\ge1/10$, proving~\eqref{eq:scalar}.
The slope bound gives the additional $(x-z)/10$ by integrating
from $z$ to $x$; it holds on both linear pieces and hence also
across their common breakpoint.

Suppose first that $e\ge0$. Take $u_0=\max\{u,x\}$, so
$0\le u_0-u\le3/r$ and $rv=(\zeta-3)_+$. The absolute
$u$ derivative of $F$ is at most two, hence replacement costs
at most $6/r$. These assertions use
$\zeta=r(u-x)+3\ge0$; the derivative estimate applies on
each piece of the continuous, piecewise differentiable function.
Also $K_0/r\le2\alpha+v$ and $480c\le288r$.
In~\eqref{eq:Fbound}, substitute~\eqref{eq:scalar}, subtract
the replacement error $6r$, and bound
$rK_0/40\le r^2(2\alpha+v)/40$. Using
$r\alpha=e$, $rv=(\zeta-3)_+$ and $r(x-z)=t$ gives
\[
 m(G)\ge r^2+\frac{3r}{20}e
                    +\frac r{40}(\zeta-3)_++\frac r{10}t-322r.
\]
Comparing with $(r-1)^2$, and using integrality, gives
\begin{equation}\label{eq:upperwindow}
                          6e+(\zeta-3)_++4t\le12800.
\end{equation}
Although $e=d-r/2$ may be a half-integer, $6e=6d-3r$ is an
integer. The whole left side is therefore an integer, and the
comparison bounds it strictly below $12800+40/r<12801$.
This proves~\eqref{eq:window} in this case.

If $-4\le e<0$, use $u_0=\max\{u,1/2\}$ and
$v=u_0-1/2$. We have $0\le u_0-u\le7/r$,
$K_0/r\le v$, and $rv=(\zeta-3+e)_+$. The absolute partial
derivatives of $F$ in $x,u$ are at most five and two on this
domain. Changing $(x,u)$ to $(1/2,u_0)$ therefore costs at most
$34/r$. In~\eqref{eq:Fbound}, apply~\eqref{eq:scalar} at
$x=1/2$, subtract $34r$, and use $480c\le240r$ and
$rK_0/40\le r^2v/40$. Since $r(1/2-z)=t-e$, we obtain
\[
 m(G)\ge r^2+\frac r{40}(\zeta-3+e)_+
                       +\frac r{10}(t-e)-302r.
\]
Comparison and half-integrality give
\begin{equation}\label{eq:lowerwindow}
                      (\zeta-3+e)_++4(t-e)\le12000.
\end{equation}
Indeed the left side is a half-integer and is strictly less than
$12000+40/r$, where $r>80$. This implies $\zeta\le12007$
and $t\le3000$, which suffice for~\eqref{eq:window}.
\end{proof}

\begin{lemma}\label{lem:treecut}
Let $T$ be a tree of order $N$ and maximum degree at most $D\ge2$.
For an edge $f$, let $s_f$ be the smaller order of the two components
of $T-f$. If $\ell\ge1$ is an integer and $K=D^\ell$, then
\begin{equation}\label{eq:treecut}
                    \sum_{f\in E(T)}\min\{s_f,K\}
                                       \ge\ell(N-K).
\end{equation}
\end{lemma}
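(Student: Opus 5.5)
The plan is a charging argument on a rooted tree. I will root $T$ at a centroid $c$, let each vertex far from $c$ distribute $\ell$ units to the edges just above it, and check that every edge $f$ receives at most $\min\{s_f,K\}$.

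\emph{Rooting.} Choose a centroid $c$ of $T$, so every component of $T-c$ has at most $N/2$ vertices. For an edge $f$, write $T_f$ for the component of $T-f$ not containing $c$, i.e. the subtree below $f$. Then $T_f$ lies inside a component of $T-c$, so $|T_f|\le N/2$. Hence $s_f=|T_f|$, with ties in the definition of the smaller order being harmless.

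\emph{Charging.} Let $V_\ell$ be the set of vertices at distance at least $\ell$ from $c$. Each $v\in V_\ell$ gives one unit to each of the first $\ell$ edges on its path towards $c$. These edges $f_1,\ldots,f_\ell$ exist because the depth of $v$ is at least $\ell$. The total charge is $\ell|V_\ell|$. The vertices not in $V_\ell$ form the ball of radius $\ell-1$ about $c$. In a tree of maximum degree $D$ this ball has at most $1+D\sum_{j=0}^{\ell-2}(D-1)^j\le D^\ell=K$ vertices; this is the same ball estimate used in Corollary~\ref{cor:boundedforest}, and it also holds for $D=2$, where it reads $2\ell-1\le2^\ell$. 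Hence the total charge is at least $\ell(N-K)$.

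\emph{Bounding the charge on an edge.} Fix an edge $f$ with lower endpoint $w$. A vertex charges $f$ only if it lies in $T_f$ at distance at most $\ell-1$ below $w$. The number of such vertices is at most $|T_f|=s_f$. It is also at most the size of a downward ball of radius $\ell-1$ at $w$, which is at most $\sum_{j=0}^{\ell-1}(D-1)^j\le D^\ell=K$. So $f$ receives at most $\min\{s_f,K\}$ units, and summing over all edges gives~\eqref{eq:treecut}.

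\emph{Main obstacle.} The delicate point is identifying $s_f$ with the subtree below $f$, and the centroid rooting is exactly what handles it. Everything else is the elementary ball-size estimate, which only needs care in the degenerate case $D=2$ and at small $\ell$, and both of those check directly.
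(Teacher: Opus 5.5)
Your proposal is correct and is essentially the paper's proof. Both root $T$ at a centroid so that $s_f$ is the order of the subtree below $f$, let each vertex of depth at least $\ell$ charge its $\ell$ nearest ancestral edges, and double count, using the bound $D^\ell$ both for the vertices of depth less than $\ell$ and for the charge each edge receives. Your explicit ball-size sums simply spell out the tree-ball bound that the paper cites without computation.
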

\begin{proof}
Root $T$ at a centroid. The order of the descendant subtree below
each edge is $s_f$: a centroid has no component of order greater
than $N/2$ after its deletion, and every descendant subtree lies
in one such component. Each vertex at depth at least $\ell$ contributes
one incidence to each of its last $\ell$ ancestral edges. An edge
receives incidences only from its descendant subtree, and only
from vertices at distance at most $\ell-1$ from its lower end.
It therefore receives at most $s_f$ incidences and at most
$D^\ell=K$, by the tree-ball bound. At most $K$ vertices have
depth less than $\ell$, since they lie in the corresponding ball
about the root. Counting by vertices gives at least
$\ell(N-K)$ incidences; counting by edges gives at most the
left side of~\eqref{eq:treecut}.
\end{proof}

\begin{proof}[Proof of Theorem~\ref{thm:degree}]
Lemmas~\ref{lem:nearfull} and~\ref{lem:middle} leave only
$r/2-4\le d<3r/5$. Suppose there is such a vertex in a strict
counterexample, and use Lemma~\ref{lem:window}.
Since $q_b\ge r-c-2\ge r/2-2136$, all cubic-neighbour retracts
have rank at least seven. Therefore
\begin{equation}\label{eq:strongcubic}
                      m(G-a-b)\ge4r-4c-12\quad(b\in C).
\end{equation}
For a noncubic neighbour $y$, \eqref{eq:dimcofactor} gives
$m(G-a-y)\ge r+\zeta+2-d_G(y)$, and
\[
                  \sum_{y\in N(a)\setminus C}d_G(y)
                                       \le d-3+\zeta+3t.
\]
Indeed, the other $r-t$ vertices on the $B$ shore have total
degree at least $3(r-t)$, and $s=3r+d-3+\zeta$.
Partitioning at $a$ gives the intermediate bound
$m(G)\ge c(4r-4c-12)+t(r+\zeta+2)-(d-3+\zeta+3t)$.
Subtract $r^2$ and substitute $d=r/2+e$, $c=r/2+e-t$ to obtain
\begin{equation}\label{eq:tpartition}
 \begin{split}
 m(G)-r^2\ge{}&(t-13/2)r-13e+12t-4(e-t)^2\\
              &+t(\zeta-1)+3-\zeta.
 \end{split}
\end{equation}
If $t\ge5$, the bounds~\eqref{eq:window} make this at least
$-3r/2-42000000>-2r+1$, a contradiction for $r\ge10^8$.
Here $|e-t|\le3204$, $-13e\ge-13\cdot2134$,
$t(\zeta-1)\ge-3200$, and $3-\zeta\ge-12800$ justify the
uniform error. Thus $t\le4$ and
\begin{equation}\label{eq:delta}
                         \delta=c-r/2=e-t\in[-8,2134).
\end{equation}

The matching partition gives
$\sum_{b\in C}m(J_b)\le m(G)<(r-1)^2$, since the other
cofactors are nonnegative. As $c>0$, some $b\in C$ therefore
has $m(J_b)<(r-1)^2/c$. The identity
\begin{equation}\label{eq:averagecofactor}
 \frac{(r-1)^2}{c}=4(r-c-2)+4+\frac{(2\delta+1)^2}{c}
\end{equation}
and $c\ge r/2-8$ show that the last fraction is less than one.
By integrality, this selected retract $J$ satisfies
$m(J)\le4(r-c-2)+4\le4q+4$, where the last inequality uses
$q=r-c-j_b\ge r-c-2$. The lower bound $m(J)\ge4q-4$ follows
from $q\ge7$. Also~\eqref{eq:exactretract} gives
$|E(J)|-3q\ge r/2-11$. Lemma~\ref{lem:dichotomy} with $B=8$
forces a unit pair, since $q\ge169$ and the size excess exceeds
$3\cdot169+16=523$. Lemma~\ref{lem:unitidentity} then bounds
the total degree excess outside the pair by four.

Every unaffected component $U$ of $F-e_b$ becomes an $A$ vertex
of degree $|U|+2+\zeta_U$; an affected one has degree
$|U|+1+\zeta_U$, where
$\zeta_U=\sum_{x\in U}(d_G(x)-3)\ge0$. Equivalently, if
$\varepsilon_U=|U\cap V(e_b)|\in\{0,1\}$ and $v_U$ is the
surviving vertex, then
\[
 |E(F[U])|=|U|-1
        =d_J(v_U)-3-\zeta_U+\varepsilon_U
        \le d_J(v_U)-3+\varepsilon_U.
\]
The later bicontractions identify only $B$ vertices, so these
$A$ degrees do not change. Removed $A$ centres are singletons
and contain no forest edges. Apart from
the component corresponding to the $A$ vertex of the unit pair,
the total number of forest edges in the surviving components is
at most the degree excess outside the unit pair plus
$\sum_U\varepsilon_U\le2$,
and hence at most six. Restoring $e_b$, all but at most seven
edges of $F$ lie in one component $T$. Its order satisfies
\begin{equation}\label{eq:largetree}
                 N_T\ge c-6\ge r/2-14,\qquad
                 \Delta(T)\le D:=12806.
\end{equation}

For every $b\in C$, define the raw cofactor excess
$B_b=m(J_b)-4q_b+4\ge0$. Partition at $a$ again and use
$q_b=r-c-j_b$ and $\sum_{b\in C}j_b\le2c$. Since
$\sum_{b\in C}m(J_b)<(r-1)^2$, this gives
\begin{equation}\label{eq:totalbudget}
 \begin{split}
 \sum_{b\in C}B_b
 &<(r-1)^2-4c(r-c)+12c\\
 &=4r+4\delta^2+12\delta+1\le5r.
 \end{split}
\end{equation}
The last constant term is at most $18241433$ on the interval
in~\eqref{eq:delta}.

Set
\begin{equation}\label{eq:treeconstants}
 K=D^8,\qquad B_0=2K,\qquad T_0=100(B_0+5)^2+10000.
\end{equation}
These constants are below $R$: since $D<2^{14}$, we have
$T_0<2^{235}<7^{100}<R$.
For $f\in E(T)$ corresponding to $b\in C$, let $s_f$ be its
smaller component order. We claim
\begin{equation}\label{eq:cutbudget}
                         B_b\ge2\min\{s_f,K\}-4.
\end{equation}
This is immediate if $B_b>B_0$. Otherwise
\eqref{eq:exactretract}, \eqref{eq:delta}, and $r\ge T_0$ give
\[
 q_b\ge r/2-2136\ge(B_0+5)^2,\qquad
 |E(J_b)|-3q_b\ge r/2-11>3(B_0+5)^2+2B_0.
\]
Lemma~\ref{lem:dichotomy}, applied with budget $B_0$, forces a
unit pair in $J_b$. By~\eqref{eq:nonhub}, the total degree excess
outside this pair is at most $B_b/2$: the parameter of that lemma is
$e=B_b-\ell\le B_b$, where $\ell$ is the multiplicity of the
unit pair. If $s_f\ge2$, both components of
$T-f$ survive as $A$ vertices, each with excess at least its
order minus two, by~\eqref{eq:componentdegree}. At most one can
be in the unit pair, which contains only one $A$ vertex. The
other component has order at least $s_f$, regardless of which
component is larger. Its degree excess alone is therefore at
least $s_f-2$. Thus $B_b\ge2s_f-4$; if $s_f=1$,
nonnegativity suffices. This proves~\eqref{eq:cutbudget}.

Sum~\eqref{eq:cutbudget} over $E(T)$ and apply
Lemma~\ref{lem:treecut} with $\ell=8$. Equations
\eqref{eq:largetree} and~\eqref{eq:totalbudget} give
\[
 5r\ge\sum_{b\in C}B_b
      \ge16(N_T-K)-4(N_T-1)
      \ge6r-16K-164.
\]
In the first lower bound we retain only the $B_b$ corresponding
to edges of $T$; all other $B_b$ are nonnegative. There are
$N_T-1$ edges in $T$, accounting for the term $-4(N_T-1)$.
But $r\ge T_0>16K+164$, a contradiction. This proves the theorem.
\end{proof}
\section{Strictly thin edges and the main theorem}\label{sec:completion}

The restriction to strictly thin edges is important here. Simplicity
of the retract of an edge deletion places useful restrictions on the
different retract arising from deletion of its two endpoints.

\begin{lemma}\label{lem:strictfibres}
Let $uv$ be a strictly thin edge of index two in a simple brace
$G$ of rank at least eight. Its opposite deletion $H=G-u-v$ can
be retracted using at most four bicontractions. When these are
performed by deleting a degree-two vertex and identifying its two
neighbours, the identifications involve pairwise disjoint pairs
of vertices of $H$. In particular, if $J=\ret H$, then
\begin{equation}\label{eq:strictmax}
                            \Delta(J)\le2\Delta(G)-2.
\end{equation}
\end{lemma}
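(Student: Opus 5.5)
The plan is to make the structure of $H$ completely explicit from the two cubic ends of $uv$, and then to use two inputs to rule out every interaction between the bicontractions: simplicity of the retract of $G-uv$, and the brace inequality~\eqref{eq:bracehall}. Write $u\in A$ and $v\in B$. Since $uv$ has index two, both ends are cubic. Let $N(u)=\{v,v_1,v_2\}$ and $N(v)=\{u,u_1,u_2\}$.

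In $G-uv$ the vertices $u$ and $v$ have degree two. The retract $\ret(G-uv)$ is obtained by identifying $v_1$ with $v_2$ and $u_1$ with $u_2$. It is a simple brace of rank $r-2\ge6$. Simplicity at once gives two facts:
\begin{itemize}
\item $v_1,v_2$ have no common neighbour other than $u$, and $u_1,u_2$ have no common neighbour other than $v$;
\item the merged vertices are joined by at most one edge, so at most one of the four edges $u_iv_j$ is present.
\end{itemize}
In $H=G-u-v$ only $u_1,u_2,v_1,v_2$ lose an edge. Since $G$ has minimum degree three, the degree-two vertices of $H$ are exactly those members of $\{u_1,u_2,v_1,v_2\}$ that are cubic in $G$. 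So there are at most four initial centres.

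First I show that no two centres are adjacent in $H$.
\begin{itemize}
\item $u_1,u_2$ lie on the same side, as do $v_1,v_2$, so the only possible adjacency is some $u_iv_j$; by the second fact there is at most one.
\item Suppose, say, $u_1v_1\in E(G)$ with both ends cubic. Consider the $A$-set $U=\{u,u_1\}$: its neighbourhood is $\{v,v_1,v_2,z\}$, where $z$ is the third neighbour of $u_1$.
\item I expect to rule this out either via a sharper version of this count or by exhibiting a nontrivial tight cut around $\{u,v,u_1,v_1\}$. Concretely, $\{u,v,u_1,v_1\}$ together with the remaining neighbours of $v_1$ and $u_1$ should form a shore violating~\eqref{eq:bracehall}. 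If that set is too big, I instead apply the brace condition in $\ret(G-uv)$ to the image of $\{u_1,v_1\}$.
\end{itemize}
This adjacency case is the step I expect to be the main obstacle. The answer may be that such a configuration does occur but is still handled by a short path contraction. I would first check the count of $|N(U)|$ for $U=\{u,u_1\}$ and its analogue on the other side, using the rank assumption $r\ge8$ to ensure $|U|\le r-2$.

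Next I show that the identification pairs are pairwise disjoint and avoid the other centres.
\begin{itemize}
\item A cubic $v_j$ identifies its two $H$-neighbours, which lie in $A\setminus\{u\}$.
\item Two such pairs, coming from $v_1$ and $v_2$, cannot share a vertex, since that vertex would be a common neighbour of $v_1$ and $v_2$ other than $u$.
\item The same holds on the $A$ side for $u_1,u_2$.
\item Pairs from opposite sides lie in opposite colour classes, so they are automatically disjoint.
\item A pair cannot contain another centre $u_i$, because that would be exactly the adjacency $u_iv_j$ excluded above.
\end{itemize}
Hence each identification merges two vertices of degree at least three. The merged vertex then has degree at least four, and no new degree-two vertex appears. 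The process therefore stops after at most four bicontractions, each on a disjoint pair of vertices of $H$.

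Finally, every vertex of $J$ is either an original vertex of $H$, or the merger of exactly one disjoint pair $x,y$. In the first case its degree is at most $\Delta(G)$. In the second case its degree is $d_H(x)+d_H(y)-2\le2\Delta(G)-2$. This proves~\eqref{eq:strictmax}.
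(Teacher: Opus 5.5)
Your proof has a genuine gap at the step you flagged. The claim that no two centres are adjacent in $H$ is false, so neither of your proposed routes can succeed. For $U=\{u,u_1\}$ you get $|N(U)|=|\{v,v_1,v_2,z\}|=4=|U|+2$, which \eqref{eq:bracehall} allows. In $\ret(G-uv)$ the image of $\{u_1,v_1\}$ is just the single edge between the two merged vertices, and your second fact explicitly permits that edge.

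Here is a concrete instance. Take $K_{6,6}$ and an edge $U'V$. Split $U'$ into $u_1$, receiving $U'V$ and one further edge, and $u_2$, receiving the other four. Split $V$ likewise into $v_1,v_2$, so that $U'V$ becomes $u_1v_1$. Then add $u$ joined to $v,v_1,v_2$, and $v$ joined to $u_1,u_2$. This is a simple brace of rank eight, by a routine check of \eqref{eq:bracehall}. Since $\ret(G-uv)=K_{6,6}$, the edge $uv$ is strictly thin of index two. Yet $u_1$ and $v_1$ are adjacent degree-two vertices of $H$.

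Your later assertions therefore also fail in this case. The pair identified at $u_1$ contains the centre $v_1$. Not every identification merges two vertices of degree at least three. And $u_1$ and $v_1$ cannot both be bicontracted.

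The missing idea is to handle this case rather than exclude it; this is what the paper does.
\begin{enumerate}
\item By your second fact there is at most one edge between centres, say $u_1v_1$.
\item Bicontract $u_1$ first. It identifies $v_1$ with its other neighbour $z$.
\item Since $u_1v_2$ would be a second edge between the pairs, $z\ne v_2$. Hence $z$ loses no edge in $H$, and $d_H(z)=d_G(z)\ge3$.
\item The merged vertex has degree $d_H(v_1)+d_H(z)-2=d_H(z)\ge3$. So the centre $v_1$ is absorbed and no new centre appears.
\item The triples of the remaining possible centres $u_2,v_2$ avoid $\{u_1,v_1,z\}$ and each other, again by your two facts. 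Indeed $z\notin N(u_2)$ by the common-neighbour condition, and $u_2v_1$, $u_1v_2$, $u_2v_2$ would each be a second edge between the pairs.
\end{enumerate}
So this case needs at most three bicontractions on pairwise disjoint pairs. The vertex coming from $\{v_1,z\}$ has degree $d_H(z)\le\Delta(G)$, so \eqref{eq:strictmax} still follows. With this case added, your treatment of the nonadjacent case and your final degree count coincide with the paper's proof.
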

\begin{proof}
Write $u\in A$, $v\in B$ and
$N(u)\setminus\{v\}=\{b_1,b_2\}$,
$N(v)\setminus\{u\}=\{a_1,a_2\}$. Bicontracting $u$ and $v$
in $G-uv$ merges these two pairs. Simplicity of that retract implies
\begin{equation}\label{eq:strictconditions}
 \begin{gathered}
 N(b_1)\cap N(b_2)=\{u\},\qquad
 N(a_1)\cap N(a_2)=\{v\},\\
                     |E(\{a_1,a_2\},\{b_1,b_2\})|\le1.
 \end{gathered}
\end{equation}
Indeed, any additional common neighbour, or two edges between the
two pairs, would produce parallel copies that survive both
bicontractions.

The degree-two vertices of $H$ form a subset $Z$ of these four
vertices. Same-shore members of $Z$ have disjoint neighbourhoods
in $H$, and $H[Z]$ has at most one edge. If $H[Z]$ is edgeless,
the centre-and-two-neighbour triples for all $z\in Z$ are
pairwise disjoint. Same-shore triples could intersect only in a
common neighbour, excluded by~\eqref{eq:strictconditions};
opposite-shore triples could intersect only if their centres
were adjacent, excluded by the present case. Their simultaneous
bicontractions merge two
vertices of degree at least three and create degree at least four.
This is already the retract.

If $xy$ is the single edge of $H[Z]$, bicontract $x$ first. Its
other neighbour $z$ has degree at least three; the merger of $y,z$
has degree $d_H(z)\ge3$. No remaining centre can meet this
triple: same-shore neighbourhoods are disjoint, and there is no
second edge in $H[Z]$. The remaining triples are disjoint from
one another as well. Bicontract them to obtain the retract, with
no new degree-two vertex.

In either case, each vertex of $J$ is either an unchanged vertex
$x$ of $H$ or is obtained by identifying two vertices $x,y$ of
$H$. Its degree is respectively $d_H(x)$ or
$d_H(x)+d_H(y)-2$. The latter is at most $2\Delta(G)-2$, while
an unchanged vertex has degree at most $\Delta(G)\le2\Delta(G)-2$.
This proves
\eqref{eq:strictmax}. Parallel copies in $J$ have been retained;
we have not assumed that this cofactor retract is simple.
\end{proof}

\begin{lemma}\label{lem:index2}
Let $G$ be a simple brace of rank $r\ge R$ with
$m(G)<(r-1)^2$. For every strictly thin edge $uv$ of index two,
\begin{equation}\label{eq:index2gap}
                            m(G-u-v)\ge4r-6.
\end{equation}
\end{lemma}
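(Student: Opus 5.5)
Write $H=G-u-v$ and $J=\ret H$, retracted as in Lemma~\ref{lem:strictfibres}. Then $J$ has rank $q\ge r-5$ (at most four bicontractions), $m(J)=m(H)$, and $\Delta(J)\le2\Delta(G)-2<r-10$ by Theorem~\ref{thm:degree}. Since $J\in\cF$ or is a terminal double edge, and $q\ge7$, Theorem~\ref{thm:clmlinear} already gives $m(H)\ge4q-4\ge4r-24$. So only a bounded gap has to be closed.

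Suppose for contradiction that $m(H)\le4r-7$. Then $m(J)\le4q-4+B$ with $B$ a small absolute constant (about $17$). Lemma~\ref{lem:dichotomy} then says: either $J$ has a unit pair, or its edge excess $|E(J)|-3q$ is bounded by an absolute constant.

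\emph{Unit-pair case.} This case is decisive. By \eqref{eq:hubdegree}, the two vertices of the unit pair have degree at least $q-1-\lfloor B/4\rfloor\ge r-12$ in $J$. This contradicts $\Delta(J)\le2\Delta(G)-2<r-10$. This step is exactly why the strict-thinness lemma (the disjoint-pair identifications bounding $\Delta(J)$) and the degree theorem were proved first.

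\emph{Bounded-excess case.} Here $|E(J)|-3q=O(1)$. Bicontraction preserves $|E|-3\cdot\text{rank}$ up to an additive term equal to the number of contractions, so $|E(H)|-3(r-1)$ is also bounded. Hence $s-3r$ is bounded, since $u,v$ are cubic (index two). So $G$ itself has bounded edge excess $S$. Lemma~\ref{lem:entropy} then gives $m(G)\ge(4/3)^{3r/(S+3)}$, which is exponential in $r$ and exceeds $(r-1)^2$ for $r\ge R$, a contradiction.

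Alternatively, bounded excess together with $\Delta(G)<r/2$ lets one apply Lemma~\ref{lem:smallcore} to a suitable opposite deletion. I would use the entropy bound, since it is cleaner.

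The main obstacle is the bookkeeping of constants. One must check that $B$ stays small enough that $Q(B)\le q$ and the hub-degree bound beats $2\Delta(G)-2$. One must also handle the possibility that $J$ is nonsimple or has fewer contractions than expected.
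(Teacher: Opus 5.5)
Your overall route is the paper's own. Lemma~\ref{lem:strictfibres} and Theorem~\ref{thm:degree} give $\Delta(J)\le2\Delta(G)-2<r-10$, and a unit pair is then refuted by \eqref{eq:hubdegree}. Otherwise Lemma~\ref{lem:dichotomy} with budget $17$ gives $|E(J)|-3q\le1486$. Since $|E(J)|=|E(G)|-5-2\kappa$, this means $|E(G)|-3r\le1488$. Lemma~\ref{lem:entropy}, applied to the simple graph $G$ rather than to $J$, then gives $m(G)\ge(4/3)^{r/497}>(r-1)^2$. Your bounded-excess case is correct as written.

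The unit-pair step does not go through as you state it, and it is the one place with no slack at all.

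From $\Delta(G)<r/2-4$ you get $\Delta(J)\le2\Delta(G)-2<r-10$, that is, $\Delta(J)\le r-11$. A hub-degree lower bound of $r-12$ is consistent with this, so your claimed contradiction does not follow.

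The missing step is to carry the exact constants through. Write $\kappa\le4$ for the number of bicontractions, so $q=r-1-\kappa$. The assumption $m(J)=h\le4r-7$ then gives $B=m(J)-4q+4\le4\kappa+1$. Hence $\lfloor B/4\rfloor\le\kappa$, and \eqref{eq:hubdegree} yields $\Delta(J)\ge q-1-\kappa=r-2-2\kappa\ge r-10$, which is the contradiction.

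Equivalently, you may use the crude bounds $q\ge r-5$ and $B\le17$, since $\lfloor17/4\rfloor=4$ still gives $r-10$. But a loose budget of ``about $17$'' is not enough: with only $q\ge r-5$ known, any budget of $20$ or more makes the floor at least $5$ and loses the comparison. With this correction your proof is complete and coincides with the paper's.
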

\begin{proof}
Theorem~\ref{thm:degree} gives $\Delta(G)<r/2-4$. Suppose instead
that $h=m(G-u-v)\le4r-7$. Use Lemma~\ref{lem:strictfibres} to
obtain a retract $J$ with $\kappa\le4$ bicontractions. Its rank
is $q=r-1-\kappa\ge r-5\ge7$, so~\eqref{eq:clmlinear}
supplies the lower bound in
\[
                   0\le B:=m(J)-4q+4\le4\kappa+1\le17.
\]
If $J$ has a unit pair, \eqref{eq:hubdegree} gives
\[
 \Delta(J)\ge q-1-\lfloor B/4\rfloor
            \ge r-2-2\kappa\ge r-10.
\]
Here $B\le4\kappa+1$ gives $\lfloor B/4\rfloor\le\kappa$;
retaining the floor is essential for this comparison.
But~\eqref{eq:strictmax} gives
$\Delta(J)\le2\Delta(G)-2<r-10$, a contradiction.

If $J$ has no unit pair, Lemma~\ref{lem:dichotomy} with budget
$17$ applies because $q\ge r-5\ge22^2$ and
$m(J)\le4q-4+17$. It yields
$|E(J)|-3q\le3\cdot22^2+34=1486$. Both endpoints
of $uv$ are cubic, so $|E(J)|=|E(G)|-5-2\kappa$.
Subtracting $3q=3r-3-3\kappa$ gives $|E(J)|-3q=S-2+\kappa$;
hence
$S=|E(G)|-3r\le1488-\kappa\le1488$.
Apply Lemma~\ref{lem:entropy} to the original simple graph $G$,
not to its possibly nonsimple retract $J$. It gives
\[
                 m(G)\ge(4/3)^{r/497}>(r-1)^2,
\]
again a contradiction. The strict comparison holds for
$r\ge10^6$: use $\log(4/3)>1/4$ and
$r/1988>2\log(r-1)$ at $r=10^6$; the difference increases
thereafter. This proves~\eqref{eq:index2gap}.
\end{proof}

\begin{proof}[Proof of Theorem~\ref{thm:main}]
Set $Q(r)=(r-1)^2$ and $D(G)=Q(r)-m(G)$. Suppose a strict
counterexample $G_0$ has rank $r_0\ge R^2$. At any strict
counterexample $G$ of rank $r\ge R$, Theorem~\ref{thm:degree}
gives $\Delta(G)<r/2-4$. It is not a biwheel or an extended
biwheel. Nor is it a cubic exceptional brace: Lemma~\ref{lem:entropy}
with $S=0$ gives $m(G)\ge(4/3)^r>Q(r)$ at these ranks.
Theorem~\ref{thm:strictthin} therefore supplies an edge $uv$
of index $i\in\{0,1,2\}$.

Let $K=\ret(G-uv)$, a simple brace of rank $r-i$, and put
$h=m(G-u-v)$. The matchings omitting $uv$ are counted by
$m(G-uv)=m(K)$; deleting $uv$ from a matching containing it
gives a bijection with the $h$ matchings of $G-u-v$. Thus
\begin{equation}\label{eq:descent}
 m(G)=m(K)+h,\qquad
 D(K)-D(G)=h-[Q(r)-Q(r-i)].
\end{equation}
If $i=0$, the deficit increases by $h\ge1$, while size decreases.
If $i=1$, write $d$ for the degree of the noncubic endpoint.
The initial cofactor has rank $r-1$ and at most $d+1$ degree-two
vertices, by the proof of Lemma~\ref{lem:cofactorbasic} with
endpoint degrees $d,3$. Its retract therefore has rank at least
$r-d-2\ge7$, so
Lemma~\ref{lem:cofactorbasic} gives
\[
               h\ge4r-4d-12>2r+4.
\]
Since $Q(r)-Q(r-1)=2r-3$, the deficit increases by at least eight.
Here $h$ is an integer, so $h>2r+4$ implies $h\ge2r+5$.
If $i=2$, Lemma~\ref{lem:index2} gives $h\ge4r-6$, while
$Q(r)-Q(r-2)=4r-8$; the deficit increases by at least two.

Thus every reduction preserves a strict counterexample and increases
its deficit by at least its rank loss: the bounds are respectively
$1,8,2$ for rank losses $0,1,2$. The pair (rank, size)
decreases lexicographically. Continue while rank is at least $R$.
The process must reach a rank $k<R$, since a further step is always
available at or above $R$. Telescoping~\eqref{eq:descent} gives
\[
                         D\ge1+r_0-k.
\]
The initial deficit is a positive integer, hence at least one;
the rank losses sum to $r_0-k$. Index-zero steps only increase
the deficit further and do not affect that sum.
At the terminal graph we need only the elementary bound $m\ge1$,
which gives $D\le(k-1)^2-1$. Consequently
\[
                         r_0\le k^2-k-1<R^2,
\]
a contradiction. This establishes the lower bound. Biwheels attain
it at every rank $r\ge4$, proving the asserted eventual equality.
\end{proof}

\section*{Declaration on the use of AI}
The authors used AI tools to assist in discussing proof 
strategies, checking proofs, and improving the exposition. The 
authors take full responsibility for the mathematical arguments, 
results, and conclusions, all of which were carefully reviewed 
and verified by them.

\end{document}